\newtheoremstyle{fancy1}{10pt}{10pt}{\itshape}{12pt}{\textsc\bgroup}{.\egroup}{8pt}{}
\newtheoremstyle{fancy2}{10pt}{10pt}{}{12pt}{\itshape}{.}{8pt}{ }
\newtheoremstyle{fancy3}{10pt}{10pt}{}{12pt}{\itshape}{ }{8pt}{ }
\theoremstyle{fancy1}
\newtheorem{thm}{Theorem}
\newtheorem{cor}[thm]{Corollary}
\newtheorem{lem}[thm]{Lemma}
\newtheorem{Isotropy Lemma}{Isotropy Lemma}
\newtheorem*{Connectedness Lemma*}{Connectedness Lemma}
\newtheorem{prop}[thm]{Proposition}
\newtheorem{main}{Theorem}
\newtheorem*{main*}{Theorem}
\newtheorem*{cor*}{Corollary}
\newtheorem*{claim*}{Claim}
\newtheorem*{problem*}{Problem}
\theoremstyle{fancy2}
\newtheorem{rem}[thm]{Remark}
\newtheorem*{rem*}{Remark}
\newtheorem{example}{Example}
\newcommand{\cref}[1]{Corollary~\ref{#1}}
\newcommand{\eref}[1]{Example~\ref{#1}}
\newcommand{\lref}[1]{Lemma~\ref{#1}}
\newcommand{\pref}[1]{Proposition~\ref{#1}}
\newcommand{\rref}[1]{Remark~\ref{#1}}
\newcommand{\tref}[1]{Theorem~\ref{#1}}
\newcommand{\sref}[1]{Section~\ref{#1}}
\newcommand{\ga}{\alpha}
\newcommand{\gb}{\beta}
\newcommand{\gt}{\theta}
\newcommand{\gC}{\Gamma}
\newcommand{\gS}{\Sigma}
\newcommand{\RP}{\mathbb{R\mkern1mu P}}
\newcommand{\CP}{\mathbb{C\mkern1mu P}}
\newcommand{\Sph}{\mathbb{S}}
\newcommand{\C}{{\mathbb{C}}}
\newcommand{\R}{{\mathbb{R}}}
\newcommand{\Z}{{\mathbb{Z}}}
\newcommand{\QH}{{\mathbb{H}}}
\newcommand{\I}{\ensuremath{\operatorname{I}}}
\newcommand{\G}{\ensuremath{\operatorname{G}}}
\newcommand{\D}{\ensuremath{\operatorname{D}}}
\newcommand{\SO}{\ensuremath{\operatorname{SO}}}
\renewcommand{\O}{\ensuremath{\operatorname{O}}}
\newcommand{\U}{\ensuremath{\operatorname{U}}}
\newcommand{\SU}{\ensuremath{\operatorname{SU}}}
\newcommand{\Pin}{\ensuremath{\operatorname{Pin}}}
\newcommand{\T}{\ensuremath{\operatorname{T}}}
\newcommand{\lb}{\llbracket}
\newcommand{\rb}{\rrbracket}
\newcommand{\B}{{B}}
\newcommand{\fp}{{\mathcal{P}}}
\newcommand{\fq}{{\mathcal{Q}}}
\newcommand{\fN}{{\mathcal{N}}}
\newcommand{\fB}{{\mathcal{B}}}% for Brieskorn varieties 
\newcommand{\fW}{{\mathcal{W}}}% for Wu-manifolds
\def\con#1=#2(#3){#1 \equiv #2 \bmod{#3}}
\newcommand{\hra}{\circlearrowright}
\newcommand{\ra}{\rightarrow}
\newcommand{\diag}{\ensuremath{\operatorname{diag}}}
\DeclareMathOperator{\Id}{Id}
\DeclareMathOperator{\Dic}{Dic}
\newcommand{\wt}{\widetilde}
\newcommand{\ovl}{\overline}
\newcommand{\gk}{\kappa}
\newcommand{\pig}{principal isotropy group }
\title{Nonnegatively curved 5-manifolds with non-abelian symmetry}
\author{Fabio Simas\footnote{The author was partially supported by CNPq-BRAZIL.}}
\date{}
\begin{document}
\maketitle

%\begin{center}
%{\huge Nonnegatively curved 5-manifolds
%\vspace{0.2cm}

%\noindent with non-abelian symmetry} 
%\end{center}

Known examples of manifolds which admit metrics of positive (sectional) curvature are rare when compared with nonnegatively curved examples. In fact, besides  rank one symmetric spaces, compact manifolds with positive curvature are known to exist only in dimensions below~25, while to generate new nonnegatively curved manifolds from known ones it is enough, for example, to take products, quotients or biquotients (see \cite{wolfgang2} for a survey). 

By the Soul Theorem any complete non-compact nonnegatively curved manifold is diffeomorphic to a vector bundle  over a compact manifold with nonnegative curvature. For compact manifolds of positive curvature Bonnet-Myers implies that the fundamental group is finite and in nonnegative curvature a finite cover is diffeomorphic to the product of a torus with a compact simply-connected manifold of nonnegative curvature~(see~\cite{ebin}). We will hence only consider compact simply-connected manifolds.

Recently, positively and nonnegatively curved manifolds were studied under the additional assumption of having a ``large'' isometry group (see the surveys \cite{grove-survey} and \cite{wilking}). The beginning of this subject was the result by Hsiang and Kleiner \cite{Kleiner1989} that a compact simply-connected 4-dimensional Riemannian manifold with positive curvature and $\Sph^1$-symmetry must be either $\Sph^4$ or $\CP^2$. The possible isometric circle-actions were classified in \cite{kerin} and \cite{grove-wilking}.

The classification of isometric circle actions on positively curved 5-manifolds is a very difficult problem and at the moment seems out of reach. In 2002, Rong~\cite{Rong2002} showed that a positively curved compact simply-connected 5-dimensional manifold with a 2-torus acting by isometries is diffeomorphic to a 5-sphere. 
In 2009 Galaz-Garcia and Searle~\cite{Galaz2011}, only assuming nonnegative curvature, showed that a simply-connected 5-manifold which admits an isometric action of a 2-torus is diffeomorphic to either $\Sph^5$, $\Sph^3 \times \Sph^2$, the nontrivial $\Sph^3$-bundle over $\Sph^2$, denoted by $\Sph^3 \tilde{\times} \Sph^2$, or the Wu-manifold $\fW=\SU(3)/\SO(3)$. The description of the actions is not yet solved in any of these cases.

In this context, a question that naturally arises is which 5-manifolds admit a metric of nonnegative (or positive) curvature with symmetry containing a connected non-abelian group $G$. 
In this paper we will classify such manifolds with nonnegative curvature and obtain a partial classification in positive curvature. For this purpose, we first classify all five-dimensional compact simply-connected manifolds which admit an action of a connected non-abelian Lie group without any geometric assumptions. They are either $\Sph^5$, $\Sph^3 \times \Sph^2$,  $\Sph^3\, \wt{\times}\, \Sph^2$, connected sums of $\Sph^3 \times \Sph^2$, or connected sums~$k\, \fW \,\#\, l\, \fB$ of copies of the Wu-manifold~$\fW$ and the Brieskorn variety $\fB$ of type $(2,3,3,3)$. Since any non-abelian connected Lie group contains $\SO(3)$ or $\SU(2)$ as a subgroup, it is natural to classify in addition the actions by these two groups up to equivariant diffeomorphisms. This is the content of Theorems \ref{thm exceptional SU(2)} and \ref{thm main intro} below.

To describe the actions we introduce the following key construction.
\vspace{0.3cm}

\noindent{\it Main example.} Let $m \leq n$ and $l$ be nonnegative integers and consider the $\Sph^1$-action on $\SU(2) \times \Sph^3 = \Sph^3 \times \Sph^3$ given by 
$$x\cdot (p, (z,w)) = (px^l, (x^m z,x^n w)),$$
where we regard $\SU(2)$ as the group of unit quaternions, $p \in \SU(2)$,   $x \in \Sph^1 =\{e^{i\gt} \in  \SU(2)\}$ and $(z,w) \in \Sph^3 \subset \C^2$.
This action is free whenever $\gcd(l,m) = \gcd(l,n) = 1$. Notice that $l=1$ if  $m =0$. As we will see, the quotient $\fN_{m,n}^l:=(\SU(2) \times \Sph^3)/ \Sph^1$ is diffeomorphic to $\Sph^3 \times \Sph^2$ if $m+n$ is even and diffeomorphic to $\Sph^3 \, \wt{\times} \, \Sph^2$ otherwise.
Consider~$\fN_{m,n}^l$ as an $\SU(2)$-manifold by defining 
$$g\cdot [(p,(z,w))] = [(gp,(z,w))],$$
for $g \in \SU(2)$. This action has isotropy groups isomorphic to $\Z_m$, $\Z_n$ and $\Z_{\gcd(m,n)}$ if $m$ and $n$ are both positive,~$\Z_n$ and $\SO(2)$ if $n > m =0$ and only one isotropy type $(\SO(2))$ if $m=n=0$. 
If $\gcd(m,n)$ is even, the action has ineffective kernel $\Z_2$ and hence it is an effective action by $\SO(3)$. 
Notice that the actions on $\fN_{1,1}^1$ and~$\fN_{2,2}^1$ are free, the actions on $\fN_{1,1}^1$, $\fN_{0,2}^1$ and $\fN_{0,0}^1$ are linear, and that, to complete the list of all linear actions on $\Sph^3 \times \Sph^2$, we should include the $\SO(3)$-action induced by the natural inclusion of $\SO(3) \subset \SO(4)$ acting on the first factor. 

We point out that $\fN_{m,n}^ l$ can also be described as a biquotient in the form 
$$ \{e\} \times \Delta\Sph^3  \setminus \, \Sph^3 \times \Sph^3 \times \Sph^3 /\,  \Delta \Sph^1$$
with $\Sph^1$ embedded as a circle of slope $(l,n - m,-n-m)$ in the maximal 3-torus, and the $\SU(2)$-action on the left in the first factor. Finally, observe that by O'Neill's formula the standard product metric on $\Sph^3 \times \Sph^3$ induces an invariant metric of nonnegative curvature on $\fN_{m,n}^l$.

Throughout this work, unless otherwise stated, $G$ will denote $\SO(3)$ or $\SU(2)$ and $M$ a simply-connected compact $G$-manifold of dimension 5. Our first result is a complete classification of all such  nonnegatively curved $G$-manifolds. 

 \begin{main}\label{thm metric} If the  $G$-manifold $M$ admits an invariant metric with nonnegative curvature, then it is equivariantly diffeomorphic to either $\Sph^5$, $\Sph^3 \times \Sph^2$, or $\fW = \SU(3)/\SO(3)$ with their natural linear $G$-actions, or $\fN_{m,n}^l$.
 \end{main}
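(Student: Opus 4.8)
The plan is to combine the equivariant classification of Theorems~\ref{thm exceptional SU(2)} and \ref{thm main intro} with a curvature obstruction extracted from the orbit space, deciding entry by entry which $G$-actions survive the hypothesis $\sec\geq 0$. By those theorems, together with the topological classification recalled in the introduction, every simply-connected compact $G$-manifold $M^{5}$ is, up to equivariant diffeomorphism, one entry of an explicit list: the linear actions on $\Sph^{5}$, $\Sph^{3}\times\Sph^{2}$ and $\fW$, the members of the family $\fN_{m,n}^{l}$, and a finite collection of further $G$-actions --- among them all $G$-actions on the connected sums $\#_{k}(\Sph^{3}\times\Sph^{2})$ with $k\geq 2$ and on $k\,\fW\,\#\,l\,\fB$ with $(k,l)\neq(1,0)$, including $\fB$ itself. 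It is then enough to prove (i) that every $G$-manifold named in the conclusion admits an invariant metric of nonnegative curvature, and (ii) that none of the remaining entries does.

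Part (i) is immediate: the round metric on $\Sph^{5}$, the product of round metrics on $\Sph^{3}\times\Sph^{2}$ for the action induced by $\SO(3)\subset\SO(4)$, and the normal homogeneous metric on $\fW=\SU(3)/\SO(3)$ are invariant and nonnegatively curved, while $\fN_{m,n}^{l}$ inherits an invariant metric of nonnegative curvature from the product metric on $\Sph^{3}\times\Sph^{3}$ via O'Neill's formula, as already noted in the Main example.

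For part (ii) I would work with the quotient $M^{*}:=M/G$. Because $\dim G=3$, no orbit is $4$-dimensional, so every isometric $G$-action on $M^{5}$ has cohomogeneity $2$ or $3$; hence $M^{*}$ is a nonnegatively curved Alexandrov space of dimension $2$ or $3$, and it is simply connected since $G$ is connected and $M$ is. In the cohomogeneity-two case $M^{*}$ is therefore homeomorphic to $\Sph^{2}$ or to $D^{2}$, and when it has boundary, $\partial M^{*}$ is a circle divided by finitely many vertices into open arcs along which the isotropy is constant, the vertices carrying strictly larger isotropy. Here the curvature hypothesis is decisive: a Gauss--Bonnet/Toponogov estimate on the nonnegatively curved surface $M^{*}$ bounds the total exterior angle along $\partial M^{*}$ by $2\pi$, hence bounds the number of corners of $M^{*}$ --- the orbits of codimension two in the quotient --- and with it the isotropy data that may be prescribed. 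Rebuilding $M$ as a cohomogeneity-two manifold over $M^{*}$ from this now bounded data, one checks that the only $G$-manifolds realized are $\Sph^{5}$, $\Sph^{3}\times\Sph^{2}$, $\Sph^{3}\,\wt{\times}\,\Sph^{2}$ and $\fW$ with the actions already on the list; in particular $\#_{k}(\Sph^{3}\times\Sph^{2})$ with $k\geq 2$, the varieties $k\,\fW\,\#\,l\,\fB$ with $(k,l)\neq(1,0)$, and $\fB$ cannot occur, since producing them would demand a boundary circle with more singular vertices than the curvature allows. In the cohomogeneity-three case $M^{*}$ is a simply-connected $3$-dimensional nonnegatively curved Alexandrov space; one runs the analogous but more involved argument, using the classification of closed $3$-dimensional nonnegatively curved Alexandrov spaces and the orbit-type stratification of $M^{*}$, or, when some circle $\Sph^{1}\subset G$ has nonempty fixed-point set, descends to the totally geodesic --- hence nonnegatively curved --- submanifold $\Fix(\Sph^{1},M)$ with its residual $N_{G}(\Sph^{1})$-action.

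The hard part is part (ii). Two genuine difficulties arise: first, making the Alexandrov/Gauss--Bonnet estimate quantitative enough to recover not merely the diffeomorphism type of $M$ but its full $G$-equivariant structure, so that a curvature-admissible cohomogeneity-two datum can only close up to a linear action or to some $\fN_{m,n}^{l}$ and never to a connected sum; and second, carrying out the cohomogeneity-three analysis, where the $3$-dimensional quotient offers much less rigidity. Throughout, the delicate point is the bookkeeping of the isotropy groups occurring along $\partial M^{*}$ --- finite cyclic groups, $\SO(2)$, $\Or(2)$, and the ineffective kernel $\Z_{2}$ that separates the $\SO(3)$ case from the $\SU(2)$ case --- which is where most of the effort is spent.
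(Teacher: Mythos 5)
Your overall strategy is the same as the paper's: take the equivariant classification of Theorems~\ref{thm exceptional SU(2)} and~\ref{thm main intro} as given, observe that every manifold in the conclusion carries an invariant nonnegatively curved metric, and then exclude the remaining entries by pushing the curvature down to the orbit space. Your cohomogeneity-two argument is exactly the paper's Lemma~\ref{isolated fixed M5}: the boundary of $M/\SO(3)$ is a geodesic polygon whose vertices are the isolated fixed points, each with interior angle $\pi/3$, and Gauss--Bonnet on the nonnegatively curved quotient caps the number of vertices at $3$; this kills $k\,\fW\,\#\,l\,\fB$ with $(k,l)\neq(1,0)$, including $\fB$. (Your worry about having to ``rebuild'' $M$ from curvature-admissible data is unnecessary: since the equivariant classification is already in hand, one only has to test each entry, and the connected sums are detected purely by their number of isolated fixed points.)

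The genuine gap is the cohomogeneity-three case, which you defer to ``an analogous but more involved argument'' without supplying it. This is not a technicality: the entries that must be excluded there are precisely the equivariant connected sums $\#_k(\Sph^3\times\Sph^2)$ of Example~\ref{ex kmissed}, with isotropy types $(\SO(2))$ and $\SO(3)$ and quotient a $3$-sphere with several $3$-disks removed. Neither of the tools you name closes this: the classification of \emph{closed} nonnegatively curved Alexandrov $3$-spaces does not apply to a quotient with boundary, and the reduction to $\Fix(\Sph^1,M)$ is not carried out. The paper's actual argument is short but essential: the quotient $X$ is a compact nonnegatively curved manifold with convex boundary, so by the Soul Theorem construction it deformation retracts onto a closed totally geodesic soul $\gS$; $\dim\gS\neq 0$ because $X$ is not a disk and $\dim\gS\neq 1$ because $X$ is simply-connected, so $\gS\simeq\Sph^2$ with a neighborhood $\Sph^2\times(-1,1)$, and the gradient-like flow then forces $\partial X$ to have exactly two components. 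This pins down the single copy of $\Sph^3\times\Sph^2$ with the linear $\SO(3)\subset\SO(4)$ action and excludes all the higher connected sums. Without this step (or a substitute for it), your part~(ii) does not rule out the family that was in fact overlooked in \cite{Hudson1977}, so the proof is incomplete as written.
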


Notice that the natural metrics on these manifolds are invariant by the $G$-action and have nonnegative curvature and that the manifolds can all be written as biquotients. 

For positive curvature we have the following partial classification.
\begin{main}\label{thm positive} If the $G$-manifold $M$ admits an invariant metric with  positive curvature, then it is either equivariantly diffeomorphic to~$\Sph^5$ with a linear action, or possibly~$\fW$ with the linear~$\SU(2)$-action, or~$\fN_{m,n}^l$ with trivial principal isotropy group, i.e., $\gcd(m,n)=1$ or 2.
\end{main}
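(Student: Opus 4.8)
The plan is to bootstrap from Theorem~\ref{thm metric}: since positive curvature implies nonnegative curvature, $M$ must already be equivariantly diffeomorphic to one of $\Sph^5$, $\Sph^3\times\Sph^2$, $\fW$, or $\fN_{m,n}^l$ with the listed $G$-actions. So the task reduces to eliminating the possibilities that cannot carry a positively curved invariant metric, and to pinning down which $G$-actions survive on the ones that do. First I would dispatch $\Sph^3\times\Sph^2$: it carries no metric of positive curvature at all (for instance, it contains a totally geodesic flat torus for any product-type metric, or more robustly, by Bonnet--Myers together with the fact that a positively curved manifold cannot have a nontrivial product of compact factors as a soul-type obstruction — the cleanest argument is that $\pi_2(\Sph^3\times\Sph^2)\neq 0$ with the $\Sph^2$ factor, which already by general position is harmless, so instead one should invoke that an isometric $G$-action of cohomogeneity low enough forces $\Sph^3\times\Sph^2$ into the Galaz-Garcia--Searle / Rong list, and Rong's theorem says a positively curved simply-connected $5$-manifold with $\T^2$-symmetry is $\Sph^5$). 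Concretely: a non-abelian $G=\SO(3)$ or $\SU(2)$ acting isometrically contains a circle, and combined with positive curvature and the structure of the action one produces an isometric $\T^2$, so Rong's theorem forces $M\cong\Sph^5$ unless the torus fails to act — which happens only in degenerate low-cohomogeneity situations that must be analyzed by hand.

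Next, for the $\fN_{m,n}^l$ family, the point is the principal isotropy group. From the Main example, the $G$-action on $\fN_{m,n}^l$ has principal isotropy $\Z_{\gcd(m,n)}$ (up to the ineffective-kernel bookkeeping). If $\gcd(m,n)\geq 3$, the principal isotropy group is a nontrivial cyclic group of order $\geq 3$ acting by isometries, and I would derive a contradiction with positive curvature: a nontrivial element of the principal isotropy fixes an entire principal orbit's worth of normal data, and one examines the fixed-point set of this cyclic group. In positive curvature the fixed-point set of an isometry is a totally geodesic submanifold, and Frankel/Berger-type arguments bound how such fixed-point sets and their codimensions interact; with a principal isotropy of order $\geq 3$ one gets too large or too numerous a fixed-point set configuration, or alternatively one shows the quotient $M/G$ together with the isotropy stratification is incompatible with Synge-type parity and connectedness constraints for positively curved orbit spaces. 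The surviving cases are exactly $\gcd(m,n)\in\{1,2\}$, i.e.\ trivial (effective) principal isotropy, which is what the statement claims. For $\Sph^5$ and for $\fW$ with its linear $\SU(2)$-action, positive curvature is already realized (round $\Sph^5$; and $\fW=\SU(3)/\SO(3)$ admits the normal homogeneous metric, whose curvature positivity for the relevant $\SU(2)$ is the subtle known case) so these stay on the list, with $\fW$ hedged as ``possibly'' because one does not here decide whether \emph{every} invariant metric issue is settled — only that it is not excluded.

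The key steps in order: (1) invoke Theorem~\ref{thm metric} to get the four candidate diffeomorphism types with their $G$-actions; (2) show $\Sph^3\times\Sph^2$ admits no positively curved metric compatible with the $G$-action, most cleanly by extending the circle inside $G$ to an isometric $\T^2$ and applying Rong's theorem, handling the residual cases where no $\T^2$ appears by a direct cohomogeneity and fixed-point analysis; (3) for $\fN_{m,n}^l$, rule out $\gcd(m,n)\geq 3$ via the totally-geodesic fixed-point-set structure of the principal isotropy group under positive curvature; (4) observe $\Sph^5$ (round) and $\fW$ (normal homogeneous, for $\SU(2)$) are not excluded, and record the $\fN_{m,n}^l$ survivors as those with $\gcd(m,n)\in\{1,2\}$.

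The main obstacle I expect is step (3): one must genuinely use positive (not just nonnegative) curvature to kill the principal isotropy $\Z_k$ with $k\geq 3$, and the argument has to be uniform over the infinite family $\fN_{m,n}^l$ and over the two group choices $\SO(3)$, $\SU(2)$. The cleanest route is probably to pass to the orbit space $M/G$ — a positively curved Alexandrov $2$- or $3$-space — and translate the isotropy data into boundary/soul constraints, then use that a principal isotropy element's fixed-point set $\Fix$ is totally geodesic of even codimension (Synge) and that two such in positive curvature must intersect (Frankel) in a way that overdetermines the configuration once $k\geq 3$ produces enough distinct or large fixed components. A secondary subtlety is the $\SO(3)$ versus $\SU(2)$ distinction in step (2)/(4): exactly when $\gcd(m,n)$ is even the effective action is by $\SO(3)$, so one must be careful that the torus-extension argument and the $\fW$ normal-homogeneous positivity are stated for the correct effective group.
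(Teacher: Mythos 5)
Your step (1) and the overall shape of step (3) match the paper, but two of your steps have genuine problems. First, step (2) as stated would fail and, moreover, attempts to prove something the paper explicitly leaves open. The hypothesis is only that the metric is invariant under $G=\SO(3)$ or $\SU(2)$; the torus extension of \cref{cor T3} is a statement about the smooth action, and there is no reason the extended $T^2$ or $T^3$ acts by isometries of the given metric, so Rong's theorem does not apply. Worse, if you could exclude every invariant positively curved metric on $\Sph^3\times\Sph^2$ you would also eliminate $\fN_{m,n}^l$ with $\gcd(m,n)\le 2$ (these \emph{are} diffeomorphic to $\Sph^3\times\Sph^2$ or $\Sph^3\,\wt\times\,\Sph^2$), contradicting the theorem's own conclusion and settling the generalized Hopf conjecture for $\Sph^3\times\Sph^2$ with non-abelian symmetry, which the paper states is open. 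What actually has to be excluded on $\Sph^3\times\Sph^2$ is only the cohomogeneity-three linear $\SO(3)$-action on the first factor; the paper does this by observing that its orbit space is a $3$-sphere with two $3$-disks removed and applying the Soul Theorem to the quotient metric. You also omit entirely the exclusion of $\fW$ with the $\SO(3)$-action, which is on the list from \tref{thm metric}; the paper kills it with \lref{isolated fixed M5} (Gauss--Bonnet on the geodesic polygon $M/\SO(3)$ with interior angles $\pi/3$ shows a positively curved $\SO(3)$-manifold has at most two isolated fixed points, while this action has three).

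For step (3) you have the right idea (totally geodesic fixed-point sets plus Frankel) but not the actual mechanism, and the argument needs to be made precise to see why $\gcd(m,n)=2$ survives. The paper's input is \pref{claim fixed}: for $H\simeq\Z_d$ with $d\ge 3$, $N(H)\simeq\Pin(2)$ has two components, so $M^H=(\fN_{m,n}^l)^H$ is a \emph{disjoint} union of two lens spaces $\Sph^3/\Z_l$, each totally geodesic of dimension $3$; Frankel's Lemma forces two totally geodesic submanifolds with $3+3>5$ to intersect, a contradiction. The same argument handles $\fN_{0,0}^1$ and $\fN_{0,n}^1$, whose principal isotropy ($\SO(2)$, resp.\ $\Z_n$) has fixed-point set two disjoint $3$-spheres. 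For $d=2$ the argument genuinely breaks down, not merely "survives": the $\Z_2$ is central in $\SU(2)$ and is the ineffective kernel, so $M^H=M$ and no totally geodesic configuration is produced. Without naming these two disjoint $3$-dimensional components your "too large or too numerous" heuristic does not close the argument.
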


It is natural to conjecture that in the context of \tref{thm positive} only the linear actions on $\Sph^5$ admit invariant metrics of positive curvature. In any case a complete classification in \tref{thm positive} would answer the generalized Hopf conjecture for $\Sph^3 \times \Sph^2$ with non-abelian symmetry.

Theorems \ref{thm metric} and \ref{thm positive} will be a consequence of a general equivariant classification of $\SO(3)$ and $\SU(2)$-actions in dimension five, a result that is of interest in its own right. We begin with the case without singular orbits.
\begin{main}\label{thm exceptional SU(2)}  If the $G$-manifold $M$ does not have singular orbits, then it is equivariantly diffeomorphic to either $\fN_{0,0}^1$ or $\fN_{m,n}^l$ for some choice of positive integers $m$, $n$ and $l$. The $\SO(3)$-manifolds correspond to~$\fN_{m,n}^l$ with~$m$ and $n$ even.  
\end{main}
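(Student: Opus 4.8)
\emph{Proof proposal.} The plan is to classify the $\SU(2)$-actions and then read off the $\SO(3)$-case, so I would first reduce to $G=\SU(2)$: composing an $\SO(3)$-action with the covering $\SU(2)\to\SO(3)$ yields an $\SU(2)$-action with the same orbits, exceptional set and orbit space and with ineffective kernel exactly $\{\pm1\}$, while an effective $\SU(2)$-action descends to $\SO(3)$ precisely when $-1$ lies in every isotropy group. Since there are no singular orbits, every isotropy group has the dimension of the principal isotropy group $H$, so the argument splits according to whether $\dim H>0$ or $H$ is finite.

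In the case $\dim H>0$, up to conjugacy $H$ is $\SO(2)$ or $\No(\SO(2))=\Pin(2)$, the only positive-dimensional proper closed subgroups of $\SU(2)$. I would rule out $H=\No(\SO(2))$ because then $H$ is maximal, so $M=\RP^2\times M^\ast$ with $M^\ast:=M/\SU(2)$ a closed $3$-manifold and structure group $\No(\No(\SO(2)))/\No(\SO(2))=1$, contradicting $\pi_1(M)=1$. With $H=\SO(2)$, the $3$-dimensional slice at a principal orbit carries the trivial $\SO(2)$-action, so the only possible exceptional orbit is $\SU(2)/\No(\SO(2))=\RP^2$ with antipodal $\{\pm1\}$-slice; this is also impossible, since then a punctured neighbourhood of the corresponding point of $M^\ast$ has link $\RP^2$, whereas $M^\ast$ minus finitely many points would be a simply connected $3$-manifold and no compact $3$-manifold has boundary $\RP^2$. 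Hence $M$ has single orbit type $\SU(2)/\SO(2)=\Sph^2$, $M^\ast$ is a simply connected closed $3$-manifold, so $M^\ast=\Sph^3$, and with structure group $\No(\SO(2))/\SO(2)=\Z_2$ and $H^1(\Sph^3;\Z_2)=0$ one concludes $M=\Sph^2\times\Sph^3=\fN_{0,0}^1$ equivariantly.

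In the case $H$ finite, $\pi\colon M\to M^\ast$ is a Seifert-type fibration over a compact orientable $2$-orbifold, with generic fibre $\Sph^3/H$ and finitely many exceptional orbits $\SU(2)/K_i$ over the cone (or boundary) points, where $H\triangleleft K_i$ and $K_i/H$ acts faithfully on a $2$-dimensional slice. Here I would exploit simple connectivity of $M$: pulling the fibration back along any orbifold covering of $M^\ast$ produces a connected covering of $M$, which must be trivial, so $M^\ast$ has trivial orbifold fundamental group; by the classification of simply connected closed orientable $2$-orbifolds it is a spindle $\Sph^2(a,b)$ with $\gcd(a,b)=1$ (allowing $a$ or $b$ to be $1$, i.e.\ a teardrop or a round sphere), with two exceptional orbits. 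A further consequence of $\pi_1(M)=1$ should be that $H$ and all the $K_i$ are cyclic — equivalently, that no binary dihedral or binary polyhedral group occurs as an isotropy group — which also forces $M^\ast$ to be closed, since a reflection slice would produce a non-cyclic $K_i$ (containing an element $k$ with $k^2=-1$). Writing the two exceptional isotropy groups as $\Z_m$ and $\Z_n$, so that $a=m/\gcd(m,n)$ and $b=n/\gcd(m,n)$, one gets $H=\Z_{\gcd(m,n)}$.

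It then remains to reconstruct $M$ from this data. The part of $M$ lying over $\Sph^2(a,b)$ with two small disks removed is equivariantly $\SU(2)/\Z_{\gcd(m,n)}\times\Sph^1\times[0,1]$, and $M$ is recovered by gluing this to the two exceptional tubes $\SU(2)\times_{\Z_m}D^2$ and $\SU(2)\times_{\Z_n}D^2$ along their boundaries; as in the cohomogeneity-one classification, the $\SU(2)$-equivariant diffeomorphism type is determined by the finitely many isotopy classes of gluing map of $\SU(2)/\Z_{\gcd(m,n)}\times\Sph^1$, which here come down to a single integer $l$, with $\pi_1(M)=1$ equivalent to $\gcd(l,m)=\gcd(l,n)=1$. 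Since $\fN_{m,n}^l=(\SU(2)\times\Sph^3)/\Sph^1$ has $\SU(2)$-quotient $\Sph^2(a,b)$ with exceptional orbits $\SU(2)/\Z_m$, $\SU(2)/\Z_n$ and the matching twisting, this identifies $M$ equivariantly with some $\fN_{m,n}^l$ (with $m,n,l$ positive). Finally the lifted $\SU(2)$-action has $\{\pm1\}$ in its ineffective kernel iff $-1\in\Z_m\cap\Z_n$, i.e.\ iff $m$ and $n$ are both even, which is exactly when the effective action is by $\SO(3)$. I expect the hard part to be the third paragraph — showing that $M^\ast$ is a spindle and that every isotropy group is cyclic — together with the equivariant reconstruction of the fourth; the remaining steps are routine.
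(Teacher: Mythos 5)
Your outline follows the same route as the paper's proof: split according to $\dim H$, identify the orbit space as $\Sph^3$ or an orbifold $\Sph^2$, bound the number of exceptional orbits by two via the surjection $\pi_1(M)\to\pi_1^{\mathrm{orb}}(M/G)$, and reconstruct $M$ by gluing two tubes $\SU(2)\times_{K_i}D^2$ along $\SU(2)/H\times_{\Z_{q_i}}\Sph^1$ with a clutching function measured by an integer. However, the two steps you defer are genuine gaps, not routine verifications. First, cyclicity of the isotropy groups does not follow from your orbifold argument. The slice analysis leaves open the pair $(H,K)=(\Dic_2,\T^*)$ (resp.\ $(\D_2,\T)$ for $\SO(3)$), with $K/H\simeq\Z_3$ acting faithfully on the slice. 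Two such exceptional orbits give $M/G=\Sph^2(3,3)$ with $\pi_1^{\mathrm{orb}}\simeq\Z_3$, which your spindle condition $\gcd(a,b)=1$ excludes; but a single one gives the teardrop $\Sph^2(3)$, whose orbifold fundamental group is trivial, so the surjection from $\pi_1(M)$ says nothing. Ruling this configuration out requires a separate argument; the paper does it by an explicit van Kampen computation using $\T^*\simeq\Dic_2\rtimes\Z_3$ and the fact that the clutching function is homotopically trivial because $N(\Dic_2)/\Dic_2\simeq\D_3$ is discrete (\lref{lem non-cyclic}). Likewise, your parenthetical that a boundary component of $M/G$ would force an element with $k^2=-1$ is not a proof that $M/G$ is closed; the paper gets this directly from \pref{prop quotient}, since in cohomogeneity two a nonempty boundary consists of singular orbits.

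Second, your last paragraph asserts without proof exactly what constitutes the bulk of the paper's argument: that the gluing maps are classified up to equivalence by their homotopy class in $\pi_1(N(\Z_d)/\Z_d)$ (\pref{prop k}) --- note this group is $\Z$ for $d\geq 3$, so ``finitely many isotopy classes'' is false, and indeed the $\fN_{m,n}^l$ are pairwise inequivalent for distinct $l\geq 0$ when $\gcd(m,n)\geq 3$; that $\pi_1$ of the glued manifold is $\Z_{\gcd(m,n,l)}$, a nontrivial Seifert--van Kampen computation (\pref{thm fund gp of M}) which is what makes ``$\pi_1(M)=1$ iff $\gcd(l,m)=\gcd(l,n)=1$'' true; and that the resulting manifold actually matches $\fN_{m,n}^l$, which requires computing the slice representations and clutching function of $\fN_{m,n}^l$ itself (\pref{prop invariant}, \lref{lem clutching}, \tref{lem equivariance}). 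The architecture of your proposal is correct and coincides with the paper's, but it omits precisely the computations that carry the proof.
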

We will see that these actions are pairwise non-equivalent for  different choices of the parameter~$l$ when $\gcd(m,n) \geq 3$ by showing that the fundamental group of the fixed point set of the principal isotropy group is isomorphic to $\Z_l$. For $\gcd(m,n)=1$ or 2, the actions on $\fN_{m,n}^l$ and $\fN_{m,n}^{l'}$ are equivalent precisely when $l = l'$ modulo $mn/\gcd(m,n)$.

\tref{thm exceptional SU(2)} easily implies the following.
\begin{cor} \label{cor T3} A $G$-action without singular orbits on $M$ extends  to $\U(2)\times \Sph^1$ if $G=\SU(2)$ and to $\SO(3) \times T^2$ if \mbox{$G=\SO(3)$}. In particular, $M$ admits an effective $T^3$-action.
\end{cor}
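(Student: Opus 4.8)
The plan is to verify the statement on the models furnished by \tref{thm exceptional SU(2)} and then extract the $T^3$-action by restricting to a maximal torus. First I would apply \tref{thm exceptional SU(2)}: $M$ is equivariantly diffeomorphic to $\fN_{0,0}^1$ or to some $\fN_{m,n}^l$ with $m,n,l>0$ and $\gcd(l,m)=\gcd(l,n)=1$, and moreover $G=\SO(3)$ exactly for $\fN_{0,0}^1$ and for those $\fN_{m,n}^l$ with both $m$ and $n$ even, while $G=\SU(2)$ in the remaining cases. Since the property of extending to a given group is invariant under equivariant diffeomorphism, it suffices to exhibit the extension on each model.

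Next I would write down a large action. Put $T^2=\Sph^1\times\Sph^1$ and let $\U(2)\times T^2$ act on $\Sph^3\times\Sph^3\subset\C^2\times\C^2$ with $\U(2)$ acting linearly on the first $\C^2$ and $T^2$ coordinatewise on the second. In the presentation $\fN_{m,n}^l=(\SU(2)\times\Sph^3)/\Sph^1$ of the Main example, identify the first factor $\SU(2)$ with $\Sph^3\subset\C^2$ by $q=z+jw\mapsto(z,w)$, using $\C^2\subset\QH$; then left translation by $g=a+jb\in\SU(2)$ becomes the \emph{linear} action of $\left(\begin{smallmatrix}a&-\bar b\\ b&\bar a\end{smallmatrix}\right)\in\SU(2)\subset\U(2)$, and right translation by $\Sph^1=\{e^{i\theta}\}$ becomes scalar multiplication, i.e.\ the action of $Z(\U(2))$. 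Hence the $\Sph^1$-action of the Main example is precisely the action of the \emph{central} subcircle
\[
\Delta:=\bigl\{\,(x^{l}I,\ x^{m},\ x^{n})\ :\ x\in\Sph^1\,\bigr\}\ \subset\ Z(\U(2))\times T^2\ \subset\ \U(2)\times T^2 .
\]
As $\Delta$ is central, the $\U(2)\times T^2$-action descends to $\fN_{m,n}^l$, and its restriction to $\SU(2)\subset\U(2)$ is the given $G$-action.

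Then I would identify the effective quotient. Freeness of the $\Sph^1$-action, together with the fact that a matrix all of whose vectors are eigenvectors is scalar, shows that the kernel of $\U(2)\times T^2$ on $\fN_{m,n}^l$ is exactly $\Delta$; hence the effective symmetry group is the compact connected Lie group $H:=(\U(2)\times T^2)/\Delta$, of dimension $5$ and rank $3$. Its commutator subgroup is the image of $\SU(2)\times\{1\}$, namely $\SU(2)/(\SU(2)\cap\Delta)$, and since $\SU(2)\cap\Delta\cong\Z_{\gcd(2l,m,n)}$ while (using $\gcd(l,m)=\gcd(l,n)=1$) one has $\gcd(2l,m,n)=2$ exactly when $m,n$ are both even and $\gcd(2l,m,n)=1$ otherwise, this commutator subgroup is $\SO(3)$ in the first case and $\SU(2)$ in the second. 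A short commutator computation shows the center of $H$ is $(Z(\U(2))\times T^2)/\Delta$, a connected $2$-torus in both cases. By the structure theory of compact Lie groups, a compact connected $5$-dimensional group whose commutator subgroup is $\SU(2)$ (resp.\ $\SO(3)$) and whose center is a connected $2$-torus must be $\U(2)\times\Sph^1$ (resp.\ $\SO(3)\times T^2$). Comparing with the first paragraph --- and treating $\fN_{0,0}^1$ as the limiting case $l=1$, $m=n=0$, where $\Delta=Z(\U(2))\times\{1\}$ and $H=\SO(3)\times T^2$ --- this yields the asserted extensions. Restricting the $H$-action to a maximal torus, which in the model is $T^4/\Delta\cong T^3$ with $T^4\subset\U(2)\times T^2$ the coordinatewise torus, and running the eigenvector argument once more gives the effective $T^3$-action.

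The main obstacle I expect is the identification of $H$: one must keep careful track of two distinct copies of $\Z_2$ --- the kernel of the covering $\SU(2)\times\Sph^1\to\U(2)$ and the ineffective kernel $\Z_2$ that the $G$-action picks up when $m,n$ are both even --- in order to decide whether $H$ is $\U(2)\times\Sph^1$ or $\SO(3)\times T^2$. A lesser point is that the identification $\SU(2)\cong\Sph^3$ in the second paragraph has to be chosen so that left translation becomes linear; with a careless choice the copy of $\U(2)$ one wants fails to commute with the quotienting circle and does not descend.
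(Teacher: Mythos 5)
Your argument is correct and follows essentially the same route as the paper's: the paper enlarges the $\Sph^1$-action to an $\SU(2)\times T^3$-action on $\SU(2)\times\Sph^3$ (with one circle acting by right translation on the first factor) and passes to the quotient, which is exactly your $\U(2)\times T^2$ picture once right translation by $\Sph^1$ is identified with the scalar action of $Z(\U(2))$. The only difference is that the paper reads off the effective quotient directly from the ineffective kernel generated by $(-1,[(-1,1,1)])$, whereas you recover it via the structure theory of compact connected groups; both yield the same parity dichotomy in $m$ and $n$.
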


Finally, for actions with singular orbits we have

\begin{main} \label{thm main intro} If the $G$-manifold $M$ has singular orbits, then $M$ is equivariantly diffeomorphic to $\Sph^5$ with a linear action or:
\begin{enumerate}[(a)]
\item $G = \SO(3)$ and $M$ is equivalent to either $\fN_{0,2m}^1 = \Sph^3 \times \Sph^2$, connected sums of copies of the Wu-manifold and copies of the Brieskorn variety of type $(2,3,3,3)$, or connected sums of copies of~$\Sph^3 \times \Sph^2$ with the linear action by $\SO(3) \subset \SO(4)$ on the first factor;
\item $G = \SU(2)$ and $M$ is equivalent to either $\fN_{0,2m+1}^1 = \Sph^3 \wt{\times} \Sph^2$, or the Wu-manifold with the left action by \mbox{$\SU(2) \subset \SU(3)$}.
\end{enumerate}
\end{main}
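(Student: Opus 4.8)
\emph{Proof strategy.} The first step is a sequence of structural reductions. Since neither $\SO(3)$ nor $\SU(2)$ has a $2$-dimensional subgroup there are no $1$-dimensional orbits, so every orbit has dimension $0$, $2$ or $3$ and the cohomogeneity is $2$ or $3$. Applying the slice theorem together with the list of effective real representations of $G$ and of its closed subgroups in dimension $\leq 5$ (for $\SU(2)$ these are built from the trivial one, $\fsu(2)\cong\R^3$, $\C^2\cong\R^4$ and $\Sym^2\C^2\cong\R^5$, and analogously for $\SO(3)$) pins down the possible isotropy groups: a singular orbit must be a point, a $\Sph^2=G/\Sph^1$ (resp.\ $G/\SO(2)$) or an $\RP^2=G/\Pin(2)$ (resp.\ $G/\O(2)$), while a fixed point has normal slice $\C^2\oplus\R$ when $G=\SU(2)$, and either $\R^3\oplus\R^2$ or the irreducible spin-$2$ representation $\R^5$ when $G=\SO(3)$. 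Because $M$ is compact and simply connected, the orbit space $M^*=M/G$ is a compact simply connected manifold with nonempty boundary: a $2$-disk in the cohomogeneity-two case, and a compact simply connected $3$-manifold with boundary in the cohomogeneity-three case, which by the representation list can occur only for $G=\SO(3)$ with fixed points of slice type $\R^3\oplus\R^2$.

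The plan is then to remove open invariant tubular neighbourhoods of all the singular orbits. On the remaining $G$-manifold $M_0$ the action has only principal and exceptional orbits, so the analysis underlying \tref{thm exceptional SU(2)} applies in its relative version for manifolds with boundary, and exhibits $M_0$ as an explicit associated bundle over a surface with boundary — essentially a piece cut out of some $\fN_{m,n}^l$. Each removed tube is a standard linear model $G\times_K D^{5-\dim(G/K)}$ with $K\in\{\Sph^1 \text{ (resp.\ }\SO(2)\text{)},\ \Pin(2)\text{ (resp.\ }\O(2)\text{)},\ G\}$, and $M$ is recovered by regluing the tubes along $G$-equivariant diffeomorphisms of the boundary lens-space-, $\RP^2$- or $\Sph^4$-bundles. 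Classifying these gluings up to equivariant diffeomorphism (that is, computing the relevant equivariant mapping class groups) reduces the problem to a finite list of combinatorial types indexed by the orbit-space data: the cyclic sequence of boundary arcs of $M^*$, their isotropy, and the twisting of the interior fibration.

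It then remains to identify each closed $5$-manifold so obtained. When $G=\SU(2)$ has no fixed point the only singular orbits are $\Sph^2$'s, the interior fibre is a lens space $\SU(2)/\Z_n$, and regluing produces $\SU(2)\times_{\Sph^1}\Sph^3$ with a linear $\Sph^1$-action of weights $(0,n)$, i.e.\ $\fN_{0,n}^1$ (the case $m>0$ being excluded because by \tref{thm exceptional SU(2)} it carries no singular orbit); here the parity of $n$ separates $\Sph^3\times\Sph^2$ from $\Sph^3\,\wt{\times}\,\Sph^2$, with simple connectivity forcing the coprimality needed for the action to be the stated one. The remaining fixed-point-free $\SU(2)$-type, distinguished from the $\fN_{0,\cdot}^1$ by its exceptional-orbit data, is recognised via its biquotient presentation as the Wu manifold with the left $\SU(2)\subset\SU(3)$-action, while a fixed point with slice $\C^2\oplus\R$ forces $M\cong\Sph^5$. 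For $G=\SO(3)$: the fixed-point-free case again gives $\fN_{0,2m}^1=\Sph^3\times\Sph^2$; a cohomogeneity-three action (slice $\R^3\oplus\R^2$) presents $M$ as an $\Sph^2$-bundle over the interior of the $3$-manifold $M^*$, capped off by $D^3$-bundles along the $2$-dimensional fixed-point surfaces, and tracking $\pi_1$ and homology identifies $M$ as an equivariant connected sum, performed along those fixed $\Sph^2$'s, of copies of $\Sph^3\times\Sph^2$ with the linear $\SO(3)\subset\SO(4)$-action; finally, isolated fixed points with the irreducible slice $\R^5$ contribute half-disk local models whose combinatorics assemble into the connected sums $k\,\fW\,\#\,l\,\fB$ (or $\Sph^5$), the $\O(2)$-arcs matching the Brieskorn variety $\fB$ of type $(2,3,3,3)$ and the spin-$2$ fixed-point models matching the Wu manifold.

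The main obstacle is precisely this last recognition step. Unlike in the $T^2$-symmetric setting of \cite{Galaz2011}, one cannot invoke a diffeomorphism classification of the ambient manifolds, since the connected sums $k\,\fW\,\#\,l\,\fB$ and $\#^k(\Sph^3\times\Sph^2)$ with $k\geq 2$ admit no effective $T^2$-action; the smooth type has to be extracted directly from the equivariant gluing data, by Van Kampen and Mayer--Vietoris computations of $\pi_1$ and $H_*$, by Barden's classification of simply connected $5$-manifolds to fix the diffeomorphism type, and by an explicit comparison of the local models near $\O(2)$-orbits and near spin-$2$ fixed points with $\fB$ and $\fW$. Verifying that distinct combinatorial parameters yield inequivalent actions — and no others occur — is the most delicate bookkeeping in the argument.
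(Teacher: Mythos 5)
Your overall scaffolding (slice representations pin down the isotropy chains, the orbit space is a disk or a simply connected $3$-manifold with boundary, and $M$ is reassembled from linear tubes around the singular orbits) is sound and matches the reductions in Lemma~\ref{lem restrictions} and Proposition~\ref{prop quotient}. But the route you propose for the reassembly replaces the paper's key tool with an unproved placeholder. The paper does not cut out the singular tubes and compute equivariant mapping class groups by hand; it invokes the Second Classification Theorem of Bredon/Hsiang--Hsiang/J\"anich, which says that over a contractible orbit space the $G$-manifolds with two orbit types $(H)\subset(K)$ are counted exactly by $\pi_{n-1}\bigl(N(H)/(N(H)\cap N(K))\bigr)/\pi_0\bigl(N(H)/H\bigr)$, and then simply exhibits that many examples (Table~\ref{tb two isot}). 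Your ``classify the regluings up to equivariant diffeomorphism'' is precisely the content of that theorem, and nothing in your sketch computes those mapping class groups. In particular, the distinction between $\fW$ and $\fN_{0,n}^1$ for $G=\SU(2)$ is \emph{not} ``exceptional-orbit data'' --- by part (b) of Proposition~\ref{prop quotient} a cohomogeneity-two action with singular orbits has no exceptional orbits at all --- but the nontrivial element of $\pi_1(\SU(2)/\Pin(2))=\Z_2$ governing how the free part is glued to the $\SO(2)$-tube. Likewise you never dispose of the $\SO(3)$-pairs $(\Z_2,\O(2))$ and $(\D_m,\O(2))$, which your representation-theoretic step does allow and which must be shown to produce only non-simply-connected manifolds (the paper quotes Remark~3C of \cite{Hudson1977} for this).

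The second genuine gap is the three-orbit-type case $\Z_2\times\Z_2\subset\O(2)\subset\SO(3)$. Your sentence about ``half-disk local models whose combinatorics assemble into the connected sums $k\,\fW\,\#\,l\,\fB$'' is the statement to be proved, not an argument: determining which gluings of the irreducible-slice fixed-point models along $\O(2)$-edges give $\Sph^5$, which give $\fW$, which give $\fB$, and showing that every other configuration is an equivariant connected sum of these, is the content of Hudson's paper, which the present paper cites rather than reproves. Finally, in the cohomogeneity-three case you use that $M^*$ is a punctured $\Sph^3$ to obtain the connected-sum description, but you never establish this; the paper's Proposition~\ref{prop 3M bound} needs Poincar\'e duality to see that $\partial M^*$ is a union of $2$-spheres and then the Poincar\'e conjecture to cap off. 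So while your strategy could in principle be pushed through, as written it defers exactly the steps where the theorem is hard.
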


The case of $G=\SO(3)$ was studied in \cite{Hudson1977}, although the author missed the equivariant connected sums of $\Sph^3 \times \Sph^2$ with the above $\SO(3)$-action, and did not describe some of the actions explicitly. For partial results about differentiable classifications, see  \cite{Nakanishi}, \cite{Oh1}, \cite{Oh2} and~\cite{Oike}.  In \cite{kleiner_thesis} the $\SO(3)$ and $\SU(2)$-actions on compact simply-connected 4-manifolds are classified.

This paper is organized as follows. In \sref{examples} we discuss preliminaries and describe the basic examples. In \sref{sec main} we introduce the $\SU(2)$-manifolds $\fN_{m,n}^l$, prove \cref{cor T3} assuming \tref{thm exceptional SU(2)} and prove some  results needed for the proof of \tref{thm exceptional SU(2)}. Sections~\ref{sec exceptional} and~\ref{sec singular} are devoted to the proofs of Theorems~\ref{thm exceptional SU(2)} and~\ref{thm main intro}. In~\sref{sec positive} we prove Theorems \ref{thm metric} and \ref{thm positive}.
\vspace{0.3cm}

\noindent {\bf Acknowledgments.} This work is part of the author's PhD Thesis at IMPA. He would like to express gratitude to IMPA for its hospitality and to his doctoral supervisors Luis A. Florit and Wolfgang Ziller for long hours of helpful and pleasant conversations during the preparation of this~paper.

\section{Preliminaries}\label{examples}

In this section we fix our notation and define some of the objects we will work with. Here $G$ is any compact Lie group not necessarily $\SO(3)$ or $\SU(2)$.

If $G$ acts on $M$ we denote by $G_p = \{g \in \G \, : \, gp = p\}$ the isotropy group of the point $p \in M$ and by~$(K)$ the conjugacy class of an isotropy group $K$ and call it {\it type of $K$}. The Principal Orbit Theorem guarantees that there exists one of smallest type, the {\it principal isotropy group} which will be denoted by $H$. By the Slice Theorem, a neighborhood of an orbit $Gp$ can be describe up to equivariant diffeomorphism by $G \times_K D$ where $K = G_p$ and $D$ is a disk in the normal space to $T_p(Gp)$. The linear action of $K$ on $D$ is called the {\it slice representation} at the point $p$.

If $H$ is a principal isotropy group then, dim$\,H \leq \text{dim}\,K$ for any isotropy subgroup $K$ of the action. If dim$\,H < \mbox{dim}\,K$ then we call $K$ a \textit{singular isotropy group}. 
If $\dim H = \dim K$ and $K$ has more connected components than $H$, then the group $K$ is called an \textit{exceptional isotropy group}. 

The dimension of the orbit space $M/G$ is called the  \textit{cohomogeneity} of the action. About the orbit space we will need the following basic result (see \cite{Bredon} p. 91, 190, 207 and 211).

\begin{prop}\label{prop quotient} If $M$ is simply-connected and $G$ is connected:
 \begin{enumerate}[(a)]
  \item The orbit space $M/G$ is simply-connected.
  \item \label{B1} If the action has cohomogeneity 2, then the orbit space is a topological surface with (or without) boundary. The boundary, if not empty, consists of the singular orbits and, in this case, there are no exceptional orbits.
  \item \label{B3} If the action has cohomogeneity 3, then $M/G$ is a simply-connected topological manifold possibly with boundary.
 \end{enumerate}
\end{prop}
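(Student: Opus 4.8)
The three assertions are standard structural facts about orbit spaces of smooth actions of compact connected groups, and the cited pages of Bredon's book prove all of them; I sketch the shape of the argument and indicate where the real difficulty lies. For part~(a) the plan is to invoke the general principle that, when a \emph{connected} Lie group $G$ acts on a connected manifold $M$, the orbit projection $\pi\colon M\to M/G$ induces a surjection $\pi_1(M)\to\pi_1(M/G)$. One route: on the principal stratum $M_{\mathrm{pr}}$ the map $\pi$ is a fiber bundle with connected fiber $G/H$, so $\pi_1(M_{\mathrm{pr}})\to\pi_1(M_{\mathrm{pr}}/G)$ is onto; the complement of $M_{\mathrm{pr}}$ has codimension at least two in $M$ (a standard consequence of the Slice Theorem for connected $G$), so $\pi_1(M_{\mathrm{pr}})\to\pi_1(M)$ is onto as well; and, by the Slice Theorem again, $M/G$ is obtained from the open dense set $M_{\mathrm{pr}}/G$ by gluing in contractible cones $D/K$, which can only impose relations on $\pi_1$. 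Chasing these maps gives the surjection $\pi_1(M)\to\pi_1(M/G)$, and $\pi_1(M)=1$ then gives part~(a).

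For the manifold statements in parts~(b) and~(c) the plan is an induction on the cohomogeneity via the Slice Theorem. A neighbourhood of an orbit $Gp$ in $M/G$ is homeomorphic to $D/K$, where $K=G_p$ acts linearly on the normal disk $D$; writing $D/K=\mathrm{cone}\,(S(D)/K)$ with $S(D)$ the unit normal sphere, the linear $K$-action on $S(D)$ has cohomogeneity one less. So for cohomogeneity $\le 3$ the possible ``link'' spaces $S(D)/K$ are, inductively, only a point, $\Sph^{0}$, $\Sph^{1}$, $[0,1]$, $\Sph^{2}$ or $D^{2}$: coning a point or $\Sph^{0}$ gives a half-line or $\R^{1}$, coning $[0,1]$ or $\Sph^{1}$ gives a half-plane or $\R^{2}$, and coning $D^{2}$ or $\Sph^{2}$ gives a half-space or $\R^{3}$. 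Hence $M/G$ is a topological manifold with boundary, a point lying on $\partial(M/G)$ precisely when its link has nonempty boundary; inspecting the slice representations, this occurs only when $K$ acts on $S(D)$ with cohomogeneity one and two singular orbits, which forces $\dim K>\dim H$, so $\partial(M/G)$ consists of singular orbits. Combined with part~(a) this yields the simple-connectivity clauses in (b) and (c).

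Two points are genuinely global and are, I expect, the main obstacle; both use the full hypothesis $\pi_1(M)=1$. First, at the two-dimensional level of the induction one must exclude the links $\RP^{2}$, the torus and the Klein bottle, since coning any of these does \emph{not} yield a manifold; the connected part of the isotropy contributes a quotient $S(D)/K^{0}$ that is $\Sph^{2}$ or $D^{2}$, but $K/K^{0}$ could a priori collapse this further, and what rules that out is that simple-connectivity of $M$, transported through part~(a) and the Slice Theorem, forces every link to be simply connected, hence $\Sph^{2}$ or $D^{2}$. Second, the clause that in cohomogeneity $2$ a nonempty boundary precludes exceptional orbits requires a van Kampen computation: an exceptional orbit produces an interior point of the surface $M/G$ over which the slice is $D^{2}$ with $K/H$ a nontrivial finite rotation group, and presenting $\pi_1(M)$ in terms of $\pi_1$ of the principal part, the meridians of the exceptional orbits, and the clutching data along $\partial(M/G)$ shows that the meridian of such an orbit survives in $\pi_1(M)$ as soon as a boundary component is present---whereas with empty boundary it may well die, as it does for the $\Sph^{1}$-action on $\Sph^{3}$ with two exceptional orbits. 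Carrying out this bookkeeping is the fiddliest part; since it is done in \cite{Bredon} I would in the paper simply cite it.
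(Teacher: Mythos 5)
The paper offers no proof of this proposition---it is quoted directly from Bredon (pp.\ 91, 190, 207 and 211)---and your sketch, which likewise defers to Bredon for the two genuinely delicate points (excluding non-spherical links and excluding exceptional orbits when $\partial(M/G)\neq\emptyset$), is a faithful outline of the arguments behind that citation, so the two ``proofs'' are essentially the same. One small caveat: your claim that $M\setminus M_{\mathrm{pr}}$ has codimension $\ge 2$ is \emph{not} a consequence of connectedness of $G$ alone (the core circle of the M\"obius band under the rotation action is a codimension-one exceptional stratum), but rather needs $H^1(M;\Z_2)=0$ to rule out special exceptional orbits; that hypothesis is available here, and in any case your factorization $\pi_1(M_{\mathrm{pr}})\twoheadrightarrow\pi_1(M_{\mathrm{pr}}/G)\twoheadrightarrow\pi_1(M/G)$ through $\pi_1(M)$ already gives part (a) without this codimension claim.
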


Every nontrivial subgroup of $\SO(3)$ is isomorphic to either the cyclic group $\Z_k$, the dihedral group~$\D_m$, the tetrahedral group $\T$, the octahedral group $\O$, the icosahedral group $\I$, the circle~$\SO(2)$ or  the normalizer of $\SO(2)$ in $\SO(3)$, which is $\O(2)$ (see \cite{wolf} Section 7.1).

The special unitary group $\SU(2)$ can be seen as $\Sph^3 \subset \C^2$, or the unit quaternions, $\Sph^3 \subset \QH$. For $\ga$ and $\gb \in \C$ with $|\ga|^2 + |\gb|^2 = 1$ we have the following correspondence between the three expressions of an element in $\SU(2)$: 
\begin{eqnarray*}
\left(\begin{array}{cc} \ga & -\ovl{\gb} \\ \gb & \ovl{\ga} \end{array} \right)\in \C^{2\times 2} \quad \sim \quad  (\ga,\gb) \in \C^2 \quad \sim \quad \ga+\gb j \in \QH.
\end{eqnarray*}
The quaternion notation will be generally used for the group $\SU(2)$, while the $\Sph^3 \subset \C^2$ notation when considering $\SU(2)$ just as a manifold.  

Denoting by $\phi: \SU(2) \ra \SO(3)$ the 2-fold cover. The subgroups of $\SU(2)$ are isomorphic to $\Z_{2k+1}$ or the pre-images by $\phi$ of the subgroups of $\SO(3)$ (see~\cite{asoh} \textsection 2). Any closed nontrivial subgroup of $\SU(2)$ is then isomorphic to $\Z_k$, the dicyclic group $\Dic_n$, the binary tetrahedral group $\T^*$, the binary octahedral group $\O^*$, the binary icosahedral group $\I^*$, the circle or the $\Pin(2)$ group.

We introduce here some of the $G$-actions with $G=\SO(3)$ or $\SU(2)$ that will appear in our classification.

\begin{example}\label{ex sphere} {\it The linear $G$-actions on $\Sph^5$ are given by:}
\begin{enumerate}[(a)]
 \item \label{ex SO(3) on S5}$A \in \SO(3) \mapsto \diag(A,1,1,1)\in \SO(6)$ with isotropy groups $\SO(2)$ and $\SO(3)$.
 \item \label{ex linear 2OT} $A \in \SO(3) \mapsto \diag(A,A)\in \SO(6)$ with isotropy groups $\{\Id\}$ and $\SO(2)$.
\item \label{ex SU2 S5}$B \in \SU(2) \mapsto \diag(B,1)\in \SU(3)$ with isotropy $\{\Id\}$ and a circle of fixed points.
 \item \label{ex irreducible} The representation of $\SO(3)$ on $\R^6$ is defined by $A\cdot X=AXA^{-1}$ where $X \in \R^6$ is a 3 by 3 symmetric matrix. The action is by isometries on~$\R^6$ with the standard inner product. The isotropy groups are $\Z_2 \times \Z_2$, $\O(2)$ and $\SO(3)$ and the orbit space is a sector of angle $\pi/3$ in $\R^2$. Thus, it induces an $\SO(3)$-action in~$\Sph^5$ with the same isotropy groups and quotient a topological 2-disk with angle~$\pi/3$ in each of the two fixed points.
\end{enumerate}
\end{example}

\begin{example} \label{ex 1missed}{\it The linear $G$-actions on $\Sph^3 \times \Sph^2$}.\newline
They are given by the nonequivalent embeddings of $G$ in $\SO(4) \times \SO(3)$.
\begin{enumerate}[(a)]
 \item  \label{ex 1missed item}$\SO(3)$ in the first factor. The fixed point set is the union of two copies of 2-spheres, any other point has isotropy $\SO(2)$ and the orbit space is diffeomorphic to~$\Sph^2 \times [-1,1]$.
 \item \label{ex 1missed S1}$\SO(3)$ in the second factor. The unique isotropy type is $(\SO(2))$. It is equivalent to~$\fN_{0,0}^1$.
 \item \label{ex 1missed item Id S1}The diagonal inclusion of $\SO(3)$, that is $A \in \SO(3) \mapsto (\diag(A,1), A) \in \SO(4)\times \SO(3)$. The isotropy groups are $\{\Id\}$ and $\SO(2)$. The action is equivalent to $\fN_{0,2}^1$.
 \item \label{ex 1missed free}$\SU(2)$ in the first factor. The action of $\SU(2)$ on $\Sph^3$ is equivalent to the left multiplication of Lie groups, thus it is free with quotient $\Sph^2$. This action is equivalent to $\fN_{1,1}^0$. Note that this is equivalent to the linear action given by the diagonal inclusion of $\SU(2)$ in $\SO(4) \times \SO(3)$, by the classification of fiber bundles over spheres (see, Corollary 18.6 in \cite{steenrod}). 
\end{enumerate}
\end{example}

\begin{example}\label{ex SU(2) on W}{\it The $\SU(2)$-action on the Wu-manifold $\fW:=\SU(3)/\SO(3)$}. \newline
Given $B\in \SU(2)$ and $[C] \in \fW$, the action is  $B\cdot [C] = [(\diag(B,1)C)]$. The isotropy types are $\{\Id\}$ and $(\SO(2))$ and the quotient is a 2-disk.
\end{example}

%\vspace{0.5cm}
\begin{example}\label{ex SO(3) on Wu}{\it The $\SO(3)$-action on $\fW$.} \newline
 The group $\SO(3)$ acts on $\fW$ as $A\cdot [B] = [AB]$.  The isotropy types are $(\Z_2 \times \Z_2)$, $(\O(2))$ and~$\SO(3)$. If the action is by isometries, the quotient is a flat triangle with vertices the fixed points and each edge corresponds to one of the three distinct embeddings of $\O(2)$ in $\SO(3)$. It was proved in \cite{Hudson1977} that this is the unique $\SO(3)$-action on the Wu-manifold up to conjugacy. 
\end{example}

\begin{example} \label{ex brieskorn} {\it The $\SO(3)$-action on the Brieskorn variety, $\fB$  of type $(2,3,3,3)$. }\newline
The Brieskorn variety of type $(2,3,3,3)$ can be defined as
$$\fB= \left\{(z_o,z_1,z_2,z_3) \in \C^4 \, ; \, z_o^2 + z_1^3 + z_2^3 + z_3^3 = 0 \text{ and } |z_o|^2 + |z_1|^2 + |z_2|^2 + |z_3|^2 = 1\right\}. $$
In \cite{Hudson1977} this action is constructed  as an example of an $\SO(3)$-manifolds with isotropy types $(\Z_2 \times \Z_2)$, $(\O(2))$ and four fixed points by a process of gluing four open sets and the manifold is identified computing topological invariants. As far as we know an explicit description of the action is not~known.
\end{example}

Given two $n$-dimensional $G$-manifolds with fixed points, choose Riemannian metrics invariant under the $G$-actions and consider a small ball of radius $r$ around a fixed point in each manifold. If the isotropy actions of $G$ on the slices of those fixed points are the same, then the actions on the boundaries of the balls are equivalent and we can form a connected sum of the two $G$-manifolds gluing along those spheres to, possibly, obtain a new $G$-manifold. Particular examples of this are:

\begin{example}\label{ex kmissed}{\it The $\SO(3)$-action on the connected sum of $k$ copies of $\Sph^3 \times \Sph^2$.}\newline
We start with the $\SO(3)$-action on $\Sph^3\times \Sph^2$ from \eref{ex 1missed} (\ref{ex 1missed item}). At a fixed point the isotropy representation is given by $\SO(3)$-action on $\R^3 \times \R^2$ that is standard in the first factor and trivial on the second. We can now take the connected sum of 2 copies of $\Sph^3 \times \Sph^2$ at the fixed points. This provides a connected sum of two fixed 2-spheres, so if we do this for $k$ copies of $\Sph^3 \times \Sph^2$, we obtain $k+1$ fixed 2-spheres. The orbit space of the action is diffeomorphic to a 3-sphere with $k+1$ three-disks removed.
\end{example}
The $\SO(3)$-manifolds in \eref{ex kmissed} were overlooked in the classification in \cite{Hudson1977}.

\begin{example}{\it The $\SO(3)$-actions on $k\, \fW \, \# \, l \, \fB$.} \label{ex hudson}\newline
The isotropy representation around an isolated fixed point of an $\SO(3)$-manifold of dimension~5 must be the unique irreducible one (see \eref{ex sphere} (\ref{ex irreducible})).
The $\SO(3)$-action on $\Sph^4$ where the connected sum takes place has quotient an interval, so there are exactly two ways to connect the manifolds. In~\cite{Hudson1977}, it is shown that depending on the way that $\fW$ and $\fB$ are connected we get distinct $\SO(3)$-manifolds.
\end{example}

%We will see that these and $\Sph^5$ with the action in \eref{ex sphere} (\ref{ex irreducible}) are precisely the simply-connected compact 5-dimensional $\SO(3)$-manifolds with isolated fixed points.

\section{The main example}\label{sec main}

%\noindent{\it Main Construction.}\newline
In this section we present the main class of examples of our classifications and prove some of its properties that will be used to obtain \tref{thm exceptional SU(2)}. The following construction is crucial since it generates all 5-dimensional $G$-manifolds without singular orbits and most actions with singular orbits for $G=\SO(3)$ or $\SU(2)$.

\begin{example}[Main example]\label{ex except} Let $m$, $n$ and $l$ be nonnegative integers, we assume that $m \leq n$ and set $l=1$ whenever $m=0$. Consider the $\Sph^1$-action on $\SU(2) \times \Sph^3$ given by 
\begin{equation}\label{eqtheta} x * (p,(z,w)) = (px^l,(x^mz,x^nw)),\end{equation}
where $p\in \SU(2)$ and $(z,w) \in \Sph^3 \subset \C^2$. We regard the $\SU(2)$ factor as the group of unit quaternions. Take $\Sph^1 = \{e^{i\gt}\}$ to the inclusion of the circle in both $\C$ and $\QH$. The quotient 
$$\fN_{m,n}^l = \SU(2) \times_{\Sph^1}\Sph^3$$
 is a manifold whenever $\gcd(l,m)=\gcd(l,n)=1$. It is clear from the sequence of homotopies that the manifold $\fN_{m,n}^l$ is simply-connected.
\end{example}
\begin{prop}\label{prop S3 bdls} The manifolds $\fN_{m,n}^l$ are diffeomorphic to $\Sph^3 \times \Sph^2$ if $m+n$ is even, otherwise they are diffeomorphic to the nontrivial $\Sph^3$-bundle over $\Sph^2$, denoted by  $\Sph^3 \, \wt{\times} \, \Sph^2$.
\end{prop}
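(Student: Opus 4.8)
The plan is to realize $\fN_{m,n}^l$ explicitly as an $\Sph^3$-bundle over $\Sph^2$ and then invoke the classification of such bundles. The key observation is that the $\Sph^1$-action in \eqref{eqtheta} is the product of two separate circle actions: one on the $\SU(2)$-factor (by $p \mapsto px^l$) and one on the $\Sph^3 \subset \C^2$-factor (by $(z,w)\mapsto (x^m z, x^n w)$). Projecting to the second factor, the quotient map $\fN_{m,n}^l \to \Sph^3/\Sph^1$ is not quite a bundle because the $\Sph^1$-action on $\Sph^3$ is not free in general; instead I would project the other way. Since $\gcd(l,m)=\gcd(l,n)=1$ forces the diagonal $\Sph^1$-action on $\SU(2)\times\Sph^3$ to be free, and in particular the $\Sph^1$-action on $\SU(2)$ by $p\mapsto px^l$ is free with quotient $\Sph^2$ (it is the Hopf-type action, free because $\Sph^1$ acts freely on $\Sph^3=\SU(2)$ by right multiplication), the map $[(p,(z,w))] \mapsto p\Sph^1 \in \SU(2)/\Sph^1 = \Sph^2$ is well-defined and exhibits $\fN_{m,n}^l$ as the associated bundle $\SU(2)\times_{\Sph^1}\Sph^3 \to \Sph^2$ with fiber $\Sph^3$, structure group $\Sph^1$ acting on $\Sph^3\subset\C^2$ by $(x^m,x^n)$.

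Next I would identify this bundle up to isomorphism. Bundles over $\Sph^2$ with structure group $\Sph^1$ are classified by $\pi_1(\Sph^1)=\Z$, i.e.\ by the clutching map, which for the principal $\Sph^1$-bundle $\Sph^3\to\Sph^2$ has degree $\pm 1$. Composing with the representation $\Sph^1 \to \Sp(1)=\SU(2)$ acting on $\Sph^3$ — here $\Sph^1$ sits in $\SU(2)$ as... well, the action $(z,w)\mapsto(x^m z,x^n w)$ is linear on $\C^2=\QH$ but need not be by unit quaternions on the left; still it is an orthogonal $\Sph^1$-action on $\Sph^3$, so the structure group reduces and the bundle is classified by the image of the generator of $\pi_1(\Sph^1)$ in $\pi_1(\SO(4))=\Z_2$. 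I would compute that the $\Sph^1$-action $\diag(x^m,x^n)$ on $\C^2$, viewed as a loop in $\SO(4)=\SO(\R^4)$, is nontrivial in $\pi_1(\SO(4))=\Z_2$ exactly when $m+n$ is odd: indeed each factor $z\mapsto x^m z$ contributes a loop of "winding" $m$ in the relevant $\SO(2)$, and under $\SO(2)\times\SO(2)\hookrightarrow\SO(4)\to\pi_1=\Z_2$ these add mod $2$, giving $m+n\bmod 2$. Since $\Sph^3$-bundles over $\Sph^2$ are classified by $\pi_1(\SO(4))=\Z_2$ (two isomorphism classes: the trivial $\Sph^3\times\Sph^2$ and the nontrivial $\Sph^3\,\wt\times\,\Sph^2$; see Corollary 18.6 in \cite{steenrod} as already cited in \eref{ex 1missed}(\ref{ex 1missed free})), this completes the identification.

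The step I expect to be the main obstacle is the computation of the class in $\pi_1(\SO(4))=\Z_2$ of the clutching data, i.e.\ making precise the claim that the associated $\Sph^3$-bundle is trivial iff $m+n$ is even. The subtlety is that the clutching function of $\fN_{m,n}^l \to \Sph^2$ is the composite of the clutching function of the Hopf bundle $\Sph^3\to\Sph^2$ (a generator of $\pi_1(\Sph^1)$, or its $l$-th power — but an $l$-fold cover of $\Sph^1$ represents $l$ times the generator of $\pi_1(\Sph^1)=\Z$, and the sign/multiplicity $l$ must be tracked carefully) with the homomorphism $\pi_1(\Sph^1)\to\pi_1(\SO(4))$ induced by $x\mapsto\diag(x^m,x^n)\in\SO(4)$. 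Because $\pi_1(\SO(4))=\Z_2$ and the generator of $\pi_1(\Sph^1)$ maps to the class of $(m+n)\bmod 2$, the $l$-fold multiplicity is invisible mod $2$ when combined with $\gcd(l,m)=\gcd(l,n)=1$; one must check that $l$ and $m+n$ cannot both be even (which follows since $\gcd(l,m)=1$ and $\gcd(l,n)=1$ preclude $l$ even unless $m,n$ both odd, in which case $m+n$ is even and $l$'s parity is irrelevant mod $2$ — so the parity of $l(m+n)$ agrees with that of $m+n$). Once this bookkeeping is done, Proposition~\ref{prop S3 bdls} follows immediately. I would also double-check consistency against the linear examples already listed: $\fN_{0,0}^1$, $\fN_{0,2}^1$ and $\fN_{1,1}^0$ are stated to be $\Sph^3\times\Sph^2$ (all have $m+n$ even), and $\fN_{0,1}^1=\Sph^3\,\wt\times\,\Sph^2$ ($m+n$ odd), providing sanity checks on the sign conventions.
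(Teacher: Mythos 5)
There is a genuine gap at the very first step. The map $[(p,(z,w))]\mapsto p\Sph^1\in\SU(2)/\Sph^1=\Sph^2$ is indeed well defined, but it does \emph{not} exhibit $\fN_{m,n}^l$ as an $\Sph^3$-bundle over $\Sph^2$ unless $|l|\leq 1$. The point is that the circle acts on the $\SU(2)$-factor by $p\mapsto px^l$, and this action has kernel $\Z_l$; the elements $x$ with $x^l=1$ fix the first coordinate but still act on the $\Sph^3$-factor by $(z,w)\mapsto(x^mz,x^nw)$, freely since $\gcd(l,m)=\gcd(l,n)=1$. Consequently the fiber of your projection over any point of $\Sph^2$ is the lens space $\Sph^3/\Z_l=L(l;m,n)$, not $\Sph^3$: one has $\fN_{m,n}^l=\SU(2)\times_{\Sph^1}\bigl(\Sph^3/\Z_l\bigr)$ as an associated bundle to the genuine Hopf fibration. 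This is corroborated by \pref{claim fixed} of the paper, whose fixed-point sets $\Sph^3/\Z_l$ are exactly the fibers of your map over the two points $\Pin(2)/\Sph^1\subset\Sph^2$. So the classification of $\Sph^3$-bundles over $\Sph^2$ by $\pi_1(\SO(4))=\Z_2$ cannot be invoked, and the subsequent parity bookkeeping (which, as you noticed, already shows signs of strain in trying to make $l$ ``invisible'') has nothing to apply to. Your argument is correct and clean in the special cases $l=0$ and $l=1$ (e.g.\ $\fN_{0,n}^1$, $\fN_{1,1}^0$), but those do not cover the proposition.

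The paper proceeds quite differently and avoids the issue: from the homotopy sequence of the principal bundle $\Sph^1\to\Sph^3\times\Sph^3\to\fN_{m,n}^l$ and Hurewicz it gets $H_2(\fN_{m,n}^l)\simeq\pi_2(\fN_{m,n}^l)\simeq\Z$, cites a computation showing $w_2=0$ exactly when $m+n$ is even, and then invokes the Barden--Smale classification of simply-connected $5$-manifolds, for which $(H_2,w_2)$ is a complete invariant in this range. If you want to keep a bundle-theoretic flavor, you would need either to restrict to $l=\pm1$ and handle general $l$ by a separate argument, or to find a different free circle direction in $\Sph^3\times\Sph^3$ transverse to the given one — but the homological route is the one that actually closes the argument.
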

%cite Jason DeVito
\begin{proof} 
The sequence of homotopies of the principal bundle $\Sph^1 \ra \Sph^3 \times \Sph^3 \ra \fN_{m,n}^l$ and Hurewicz isomorphism guarantee that $H_2(\fN_{m,n}^l) \simeq \pi_2(\fN_{m,n}^l) \simeq \Z$. By \cite{jason} p. 77 the second Stiefel-Whitney class $w_2=0$ if $m+n$ is even and $w_2 = 1$ otherwise. So, the result follows from Barden-Smale Theorems, c.f. \cite{Barden1965}
and~\cite{Smale1962}.
\end{proof}

Now we define the $\SU(2)$-action on $\fN_{m,n}^l$ by 
$$g \cdot [(p,(z,w))] = [(gp,(z,w))].$$
We will also denote this $\SU(2)$-manifold by $\fN_{m,n}^l$.
This action has the same isotropy structure as the $\Sph^1$-action on the second factor $\Sph^3$. If the integers $l$, $m$ and $n$ are nonzero, the isotropy group of the point $[(p,(z,w))]$ is respectively $\Z_m$, $\Z_n$ or $\Z_{\gcd(m,n)}$ according $w=0$, $z=0$ or both~$z$ and $w$ are nonzero. For convenience we set $\gcd(0,0) = 1$.

In general, for $\SU(2)$-actions, if the \pig contains $\Z_2$ as a subgroup, the action is ineffective since $\Z_2$ is normal in $\SU(2)$. Therefore, $\fN^l_{2m,2n}$  becomes  an effective \mbox{$\SO(3)$-m}anifold with isotropy groups $\Z_{m}$,  $\Z_{n}$ and $\Z_{\gcd(m,n)}$. 
Observe that in this case the underlying manifold is  diffeomorphic to $\Sph^3 \times \Sph^2$ and $l$ has to be odd.

\begin{rem}
The $\Sph^1$-action \eqref{eqtheta} is a restriction of the $\SU(2) \times T^3$-action on $(\SU(2) \times \Sph^3)$ given~by
$$(g, (r,s,t)) \cdot (p,(z,w)) = (gpr, (sz, tw))$$
where $r,s,t \in \Sph^1$. In our example $\{1\} \times \Delta \Sph^1 = \{(1,(x^l, x^m, x^n))\} \subset \SU(2) \times T^3$ is the $\Sph^1$ group that acts on $\SU(2) \times \Sph^3$. Hence, the quotient $\fN_{m,n}^l$   admits an action by  $\SU(2) \times  (T^3 /\Delta \Sph^1)$ with kernel generated by $(-1,[(-1,1,1)])$. Notice that if $m$ and $n$ are even then \mbox{$(1,(-1,1,1))\in \Delta \Sph^1$}, so the kernel is $\Z_2 \subset \SU(2)$ and $\SO(3) \times T^2$ acts effectively on $\fN_{m,n}^l$. Otherwise, the ineffective kernel is the diagonal $\Z_2$ in $\SU(2) \times \Sph^1 \subset \SU(2) \times T^2$, thus  $\U(2) \times \Sph^1$ acts effectively on~$\fN_{m,n}^l$. This proves \cref{cor T3}, assuming  \tref{thm exceptional SU(2)}.
\end{rem}

As we pointed out, if $lmn \neq 0$, the isotropy types of $\fN_{m,n}^l$ are  $(\Z_m)$, $(\Z_n)$ and $(\Z_{\gcd(m,n)})$. Hence, if $m = n$, the action has only one isotropy type $(\Z_n)$. In particular, we get free actions on $\Sph^3 \times \Sph^2$ when $m$ and $n$ are both equal to either 1 or 2. 
The $\SU(2)$-manifolds~$\fN_{1,1}^l$ are all equivalent since there is only one isomorphism class of $\SU(2)$-principal bundles over~$\Sph^2$ (c.f. Corollary~18.6~in~\cite{steenrod}).
The same result shows that there are two non-equivalent \mbox{$\SO(3)$-principal} bundles over~$\Sph^2$, but just one of them is simply-connected, $\fN_{2,2}^1$, with the free $\SO(3)$-action. 
Notice that the free $\SU(2)$-manifold~$\fN_{1,1}^0$ corresponds to the left multiplication on the first factor of $\SU(2) \times \Sph^2$.

If $m=n=0$ then $l=1$ and the $\Sph^1$-action on the first factor reduces to the Hopf action with quotient diffeomorphic to $\Sph^2 \simeq \SU(2)/\SO(2)$. 
Thus the $\SU(2)$-action is the natural product on the cosets and has unique isotropy type equal to $(\SO(2))$. On the other hand, if $m = 0 < n$, then $l=1$ again and the isotropy types are $(\Z_n)$ and $(\SO(2))$.

\begin{rem} \label{rem sign of l} We do not obtain new $G$-manifolds by taking negative integer parameters. In fact, if we regard the $\Sph^1$-action on the first factor  of $\SU(2) \times \Sph^3$ considering $\SU(2) \subset \C^2$ rather than the unit quaternions, for $x \in \Sph^1$ and $(u,v) \in \SU(2)$  we obtain the action \mbox{$x\cdot (u,v) = (ux^l, v \ovl{x}^l)$}. Therefore, the $\SU(2)$-manifolds $\fN_{m,n}^l$ and $\fN_{m,n}^{-l}$ are equivalent by switching $(u,v)$ to $(v,u)$. In the same way we can consider the $\SU(2)$-equivariant diffeomorphism \mbox{$f: \fN_{m,n}^l \ra \fN_{-m,n}^l$}  which takes $[(p,(z,w))]$ to $[(p,(\ovl{z},w))]$. The equivalence for $n$ negative is analogous.
 \end{rem}

Hereafter in this section $\fN_{m,n}^l$ will be denoted by $\fN_{n_1,n_2}^l$ and its elements are now represented by $[(p,(z_1,z_2))]$. Now we will prove some results that will be crucial to obtain \tref{thm exceptional SU(2)}. \pref{prop invariant} and \lref{lem clutching} describe respectively the slice representations in the neighborhoods of the exceptional orbits of $\fN_{n_1,n_2}^l$ and the clutching function of the decomposition into two equivariant neighborhoods of the exceptional orbits. The slice representations and the homotopy class of the clutching function arise as invariants to be used in the proof of \tref{thm exceptional SU(2)}. 

\begin{prop} \label{prop invariant} The $\SU(2)$-manifold $\fN_{n_1,n_2}^l$ with $n_1$ and $n_2$ positive, when written as the union of the slice representations has the form
$$\fN_{n_1,n_2}^l = \SU(2)\times_{\Z_{n_1}} D^2 \, \bigcup_{\varphi} \, \SU(2)\times_{\Z_{n_2}} D^2,$$
where $\Z_{n_j}$ acts on $\SU(2) \times D^2$ by $\xi\cdot(p,z)=( p\xi  , \xi^{n_il^{-1}}z)$ and $l^{-1}$ is the inverse of $l$ in $\Z_{n_j}$ for $1 \leq i \neq j \leq 2$. 
\end{prop}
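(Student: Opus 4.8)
The plan is to exhibit $\fN^l_{n_1,n_2}$ explicitly as a union of two tubular neighborhoods of the two exceptional orbits $\SU(2)/\Z_{n_1}$ and $\SU(2)/\Z_{n_2}$ and compute the slice representation at each. Recall $\fN^l_{n_1,n_2}=(\SU(2)\times\Sph^3)/\Sph^1$ with $x*(p,(z_1,z_2))=(px^l,(x^{n_1}z_1,x^{n_2}z_2))$ and the left $\SU(2)$-action $g\cdot[(p,(z_1,z_2))]=[(gp,(z_1,z_2))]$. Split $\Sph^3=\{|z_1|^2+|z_2|^2=1\}$ into the two closed solid tori $T_1=\{|z_1|\le\tfrac1{\sqrt2}\}$ and $T_2=\{|z_2|\le\tfrac1{\sqrt2}\}$, glued along the Clifford torus $|z_1|=|z_2|=\tfrac1{\sqrt2}$. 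Each $T_j$ deformation retracts $\Sph^1$-equivariantly onto the circle $z_i=0$ (for $i\ne j$), whose $\Sph^1$-orbit in $\SU(2)\times\Sph^3$, after passing to the quotient, is the exceptional orbit $\SU(2)\cdot[(p,(z_1,z_2))]$ with the appropriate coordinate vanishing; hence $(\SU(2)\times T_j)/\Sph^1$ is a disk bundle over that exceptional orbit.

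First I would identify $(\SU(2)\times T_1)/\Sph^1$ explicitly. On $T_1$ write $z_2=e^{i\psi}\sqrt{1-|z_1|^2}$; the $\Sph^1$-action on $T_1$ alone is free, and a slice for it at $z_1=0$ is $D^2=\{z_1 : |z_1|\le\tfrac1{\sqrt2}\}$ sitting inside $T_1$ via $z_1\mapsto(z_1,\sqrt{1-|z_1|^2})$ (i.e.\ the slice $\psi=0$). The residual stabilizer of this slice is the subgroup of $\Sph^1$ fixing $z_2=\sqrt{1-|z_1|^2}$, namely $\{x : x^{n_2}=1\}\cong\Z_{n_2}$. So $(\SU(2)\times T_1)/\Sph^1\cong \SU(2)\times_{\Z_{n_2}}D^2$, where $\xi\in\Z_{n_2}$ acts on $(p,z_1)$ by $\xi\cdot(p,z_1)=(p\xi^{?},\xi^{n_1}z_1)$: the action on the disk coordinate $z_1$ is the restriction to $\Z_{n_2}\subset\Sph^1$ of $x\cdot z_1=x^{n_1}z_1$; the action on $p$ is the restriction of $x\cdot p=px^l$. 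To see this as the stated normal form one rescales the generator: since $\gcd(l,n_2)=1$, $l$ is invertible mod $n_2$, so reparametrizing $\Z_{n_2}$ by $\xi\mapsto\xi^{l^{-1}}$ turns $p\xi^{l}$ into $p\xi$ and $\xi^{n_1}z_1$ into $\xi^{n_1 l^{-1}}z_1$, giving exactly $\xi\cdot(p,z_1)=(p\xi,\xi^{n_1l^{-1}}z_1)$ with $l^{-1}$ taken mod $n_2$ — which is the claim with $i=1$, $j=2$. The computation for $T_2$ is identical with the indices $1,2$ interchanged: $(\SU(2)\times T_2)/\Sph^1\cong\SU(2)\times_{\Z_{n_1}}D^2$ with $\xi\cdot(p,z_2)=(p\xi,\xi^{n_2l^{-1}}z_2)$, $l^{-1}$ mod $n_1$.

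Finally, these two pieces cover $\fN^l_{n_1,n_2}$ and meet along the common boundary $(\SU(2)\times\{|z_1|=|z_2|=\tfrac1{\sqrt2}\})/\Sph^1$, so $\fN^l_{n_1,n_2}=\SU(2)\times_{\Z_{n_1}}D^2\ \bigcup_{\varphi}\ \SU(2)\times_{\Z_{n_2}}D^2$ for the gluing diffeomorphism $\varphi$ of that principal $\SU(2)$-bundle over the Clifford torus quotient (identified in the next lemma). I expect the main obstacle to be purely bookkeeping rather than conceptual: keeping the two disk coordinates, the two moduli $n_1,n_2$, and the residual $\Z_{n_j}$-actions straight, and in particular checking that the reparametrization $\xi\mapsto\xi^{l^{-1}}$ is the correct one to bring the action into the advertised form on \emph{both} factors simultaneously — i.e.\ that it is $l^{-1}$ and not $l$ that appears, and that the modulus for $l^{-1}$ is $n_j$ (the order of the acting group) and not $n_i$. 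One should also remark that the hypothesis $\gcd(l,n_1)=\gcd(l,n_2)=1$, already needed for $\fN^l_{n_1,n_2}$ to be a manifold, is exactly what makes $l^{-1}\bmod n_j$ meaningful, and that when $n_j=1$ the corresponding piece is simply $\SU(2)\times D^2$ with no exceptional orbit, consistent with the statement.
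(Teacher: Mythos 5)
Your proposal is correct and follows essentially the same route as the paper's proof: decompose $\Sph^3$ into the two solid tori according to which of $|z_1|,|z_2|$ is at least $1/\sqrt{2}$, use the $\Sph^1$-action to normalize the large coordinate to be real and positive, observe that the residual stabilizer of that slice is $\Z_{n_j}\subset\Sph^1$ acting by $(p,z)\mapsto(p\xi^l,\xi^{n_i}z)$, and then reparametrize the generator by $l^{-1}\bmod n_j$ to reach the stated normal form. (One cosmetic slip: the $\Sph^1$-action on $T_1\subset\Sph^3$ alone is \emph{not} free, since the core circle has stabilizer $\Z_{n_2}$; what you actually need and use is freeness on $\SU(2)\times T_1$, which holds because $\gcd(l,n_2)=1$.)
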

\begin{proof}
First consider $\Sph^3 = \B_1 \cup \B_2$ where $\B_j = \{(z_1,z_2)\in \Sph^3 : |z_j|\geq 1/\sqrt{2}\}$ for $j=1$ or 2, and the identification of the boundaries is the trivial one (the identity map).  Note that 
$$\fN_{n_1,n_2}^l = \SU(2) \times_{\Sph^1} \B_1 \,\bigcup_{\Id}\, \SU(2) \times_{\Sph^1} \B_2.$$
Now we describe an equivalence between $\SU(2) \times_{\Sph^1} \B_{j}$ and a certain quotient of $\SU(2)\times D^2$ by~$\Z_{n_j}$. Assume $j=2$, the other case being analogous.
For $[(p,(z_1,z_2))] \in \SU(2)\times_{\Sph^1}\B_2$ we have $z_2 \neq  0$ therefore we can write $[(p,(z_1,z_2))] = [(\, p,(\, z_1, |z_2|\, z_2/|z_2| \, ))]$. Take $x = \zeta\eta_2 \in \Sph^1$ with $\eta_2^{n_2}=1$ and $\zeta^{n_2} = \ovl{z_2}/|z_2|$ with $\arg(\zeta) < 2\pi/n_2$ in order to obtain $x^{n_2} = \ovl{z_2}/|z_2|$. Then
 $$
[(\, p,(\, z_1, z_2 \, ))] = [(\, p\zeta^l\eta_2^l,(\, \eta_2^{n_1}\zeta^{n_1}z_1, |z_2| \, ))] =  
[(\, \hat{p}\eta_2^l,(\, \eta_2^{n_1} \hat{z_1}\,, \,\sqrt{1-|\hat{z_1}|^2}\, ))],$$
where $\eta_2 \in \Z_{n_2} \subset \Sph^1 \subset \C$, we called $\hat{p} = p\zeta^l$ for each $p \in \SU(2)$ and $\hat{z_1} = \zeta^{n_1} z_1$. So, for some equivariant diffeomorphism $\varphi: \SU(2) \times_{\Z_{n_1}} \Sph^1 \ra \SU(2) \times_{\Z_{n_2}} \Sph^1$ we have
\begin{equation}\label{eqn broken N}\fN_{n_1,n_2}^l = \SU(2)\times_{\Z_{n_1}} D^2 \,\bigcup_{\varphi} \, \SU(2)\times_{\Z_{n_2}} D^2,\end{equation}
with the actions $\Z_{n_j} \hra \SU(2) \times D^2$ given by $\eta_j\cdot (p,z) = ( p \eta_j^l, \eta_j^{n_i}z)$. The result follows from considering the generator $\xi_j = \eta^l_j$ of $\Z_{n_j}$.
\end{proof}
 
Denote $\gcd(n_1, n_2) = d$ and $n_j = dq_j$. Notice that the principal isotropy group $\Z_d \subset \Z_{n_j}$ acts trivially on the slice $D^2$, so $\SU(2) \times_{\Z_{n_j}}D^2$ is equivalent to $\SU(2)/\Z_d \times_{\Z_{q_j}} D^2$ and the equivalence is given by  \mbox{$[(p,z)]  \sim [(p\Z_d,z)]$}. Therefore the clutching function $\varphi$ is an equivariant map defined from \mbox{$\SU(2)/\Z_d \times_{\Z_{q_1}} \Sph^1$} to   \mbox{$\SU(2)/\Z_d \times_{\Z_{q_2}} \Sph^1$}. It is thus sufficient to compute $\varphi$ along the path $t \mapsto [(\Z_d, \mu(t)^{n_2})]$ where 
$$\mu(t)=e^{2\pi it/dq_1q_2} \in \C \subset \QH.$$
Whenever necessary to make arguments more clear, we will use $\lb .\,,. \rb$  for the classes in \mbox{$G/\Z_d \times_{\Z_{q_2}} D^2$}.

\begin{lem} \label{lem clutching} The clutching function $\varphi:\SU(2)/\Z_d \times_{\Z_{q_1}} \Sph^1 \ra \SU(2)/\Z_d \times_{\Z_{q_2}} \Sph^1$ is
\begin{equation} \varphi([(\Z_d, \mu(t)^{n_2})]) = \lb (\ovl{\mu(t)}^l \Z_d, \ovl{\mu(t)}^{n_1})\rb. \end{equation}
\end{lem}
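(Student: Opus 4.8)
The plan is to track an element through the gluing explicitly, reusing the change-of-coordinates computation from the proof of \pref{prop invariant}. Recall that the decomposition \eqref{eqn broken N} came from the identification $\fN_{n_1,n_2}^l = \SU(2)\times_{\Sph^1}\B_1 \cup_{\Id} \SU(2)\times_{\Sph^1}\B_2$, where the gluing over the common boundary $\{|z_1|=|z_2|=1/\sqrt2\}$ is the identity. The clutching function $\varphi$ is obtained by writing a boundary point in the $\B_2$-coordinates $\SU(2)\times_{\Z_{n_2}}D^2$ (as done in the proof of \pref{prop invariant}), passing through the identity gluing in $\SU(2)\times_{\Sph^1}\Sph^3$, and then rewriting it in the $\B_1$-coordinates $\SU(2)\times_{\Z_{n_1}}D^2$ by the symmetric computation. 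So the main step is simply to run the normalization of \pref{prop invariant} a second time, now solving for a unit-modulus $z_1$ rather than $z_2$.

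First I would fix the test point. Along the path, take $x=\mu(t)\in\Sph^1$, so that in $\SU(2)\times_{\Sph^1}\B_1$ the class $[(p,(z_1,z_2))]$ with $z_1=\mu(t)^{n_2}$ on the boundary corresponds, after the $\B_1$-normalization (solving $x^{n_1}=\ovl{z_1}/|z_1|$ analogously to the $j=2$ case, with the roles of $n_1$ and $n_2$ swapped), to the class $[(\Z_d,\mu(t)^{n_2})]$ in $\SU(2)/\Z_d\times_{\Z_{q_1}}\Sph^1$; here I use $\mu(t)^{n_1 n_2}$ as the reference power and the generator $\xi_1=\eta_1^l$ exactly as in \pref{prop invariant}. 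Concretely, in $\SU(2)\times_{\Sph^1}\B_1$ this class is represented by $(p,(1/\sqrt2, z_2))$ for a suitable $p$ and $z_2$ of modulus $1/\sqrt2$; choosing $p$ and $z_2$ so that the representative lies on the boundary gives, by the identity gluing, the same element of $\SU(2)\times_{\Sph^1}\B_2$.

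Then I would feed this boundary representative into the $\B_2$-normalization from \pref{prop invariant}: one solves $x^{n_2}=\ovl{z_2}/|z_2|$, multiplies $p$ on the right by $x^l$, and multiplies $z_1$ by $x^{n_1}$, to land in $\SU(2)/\Z_d\times_{\Z_{q_2}}D^2$. The point of choosing the test path $\mu(t)$ is that the two normalizations are governed by the substitutions $x\mapsto\ovl{\mu(t)}$ (up to $\Z_d$-ambiguity), so the net effect on the first coordinate is multiplication by $\ovl{\mu(t)}^{\,l}$ and on the second coordinate multiplication by $\ovl{\mu(t)}^{\,n_1}$; since $\mu(t)^{n_2}$ was the input, the output second coordinate is $\mu(t)^{n_2}\ovl{\mu(t)}^{\,n_1}$, but $\ovl{\mu(t)}^{\,n_1}$ is the correct normalized representative in $\SU(2)/\Z_d\times_{\Z_{q_2}}\Sph^1$ once one divides out the $\Z_{q_2}$-action (i.e. $\mu(t)^{n_2}=\mu(t)^{dq_2}$ is absorbed). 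This yields $\varphi([(\Z_d,\mu(t)^{n_2})])=\lb(\ovl{\mu(t)}^{\,l}\Z_d,\ovl{\mu(t)}^{\,n_1})\rb$, as claimed.

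The main obstacle is bookkeeping: keeping straight which roots of unity are being divided out at each stage (the $\Z_{n_1}$ versus $\Z_{n_2}$ actions, and the residual $\Z_d$ and $\Z_{q_i}$), and checking that the choices of branch for the various $n_i$-th roots are consistent so that the formula is well-defined and independent of the representative. In particular I would verify that passing to the quotient by $\Z_d$ on the $\SU(2)$-factor is compatible with the ambiguity in the root $\zeta$ (exactly the reduction noted before the lemma, $\SU(2)\times_{\Z_{n_j}}D^2\simeq \SU(2)/\Z_d\times_{\Z_{q_j}}D^2$), and that the conjugation $\mu(t)\mapsto\ovl{\mu(t)}$ is forced by the orientation of the identity gluing on the boundary sphere. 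Everything else is a direct transcription of the argument already carried out in \pref{prop invariant}.
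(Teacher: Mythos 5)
Your approach is the same as the paper's: identify $[(\Z_d,\mu(t)^{n_2})]$ with the concrete boundary representative $[(1,(1/\sqrt2,\mu(t)^{n_2}/\sqrt2))]$ in $\SU(2)\times_{\Sph^1}\B_1$, pass through the identity gluing, and run the $\B_2$-normalization of \pref{prop invariant} with $x(t)=\ovl{\mu(t)}\eta_2$. That is exactly what the paper does, and your final formula is correct.

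One step of your bookkeeping is wrong as written, even though it does not change the answer. You assert that the output slice coordinate is $\mu(t)^{n_2}\ovl{\mu(t)}^{n_1}$ with the factor $\mu(t)^{n_2}$ ``absorbed by the $\Z_{q_2}$-action.'' It cannot be: $\mu(t)^{n_2}=e^{2\pi i t/q_1}$ is not a $q_2$-th root of unity for generic $t$, so nothing absorbs it. The correct accounting is that the incoming and outgoing slice coordinates are \emph{different} coordinates of $\Sph^3$. The incoming one is the normalized $z_2=\mu(t)^{n_2}/\sqrt2$, and it is precisely this coordinate that is eliminated by solving $x^{n_2}=\ovl{z_2}/|z_2|$; the outgoing one is $z_1$, which was already normalized to the real value $1/\sqrt2$ in the $\B_1$-chart and therefore only acquires the factor $x^{n_1}=\ovl{\mu(t)}^{n_1}\eta_2^{n_1}$, yielding $\ovl{\mu(t)}^{n_1}$ in $\SU(2)/\Z_d\times_{\Z_{q_2}}\Sph^1$ after the roots of unity are quotiented out. (The same mix-up appears earlier where you write ``$z_1=\mu(t)^{n_2}$''; that should be $z_2$, and the first paragraph states the direction of $\varphi$ backwards, though your actual computation runs it the right way.) With those corrections your argument coincides with the paper's.
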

\begin{proof}
Consider the notation introduced in the proof of \pref{prop invariant}. First, identify the elements \mbox{$[(\Z_d, \mu(t)^{n_2})] \in \SU(2)/\Z_d \times_{\Z_{q_1}} \Sph^1$} with $[(1, (1/\sqrt{2} , \mu(t)^{n_2}/\sqrt{2})]$ that lie in  the boundary of $\SU(2) \times_{\Sph^1} B_1$. Let $x(t) = \ovl{\mu(t)}\eta_2 \in\Sph^1$, recall that \mbox{$\eta_2^{n_2} = 1$}, and use the equivalence by $\Sph^1$ to get $[(1, (1/\sqrt{2} , \mu(t)^{n_2}/\sqrt{2})] = [( \ovl{\mu(t)}^l\eta_2^l, ( \ovl{\mu(t)}^{n_1}\eta_2^{n_1}/\sqrt{2},1/\sqrt{2}))]$. If we see this last element in \mbox{$\SU(2)\times_{\Sph^1} B_2$}, then the previous identification yields $\lb (\ovl{\mu(t)}^l \Z_d, \ovl{\mu(t)}^{n_1})\rb \in \SU(2)/\Z_d \times_{\Z_{q_2}} D^2$.
\end{proof}

\pref{prop invariant} implies that if  $\fN_{n_1,n_2}^{l}$ and $\fN_{n_1,n_2}^{l'}$ are equivalent, then $l' \equiv \pm l \mod q_1q_2$. Indeed, we will see in \tref{lem equivariance} that it can be improved to $l' \equiv \pm l \mod 2q_1q_2$ if $d = 2$ and for $d \geq 3$, the next result guarantees that the parameter $l\geq 0$ itself is an invariant of the action.

\begin{prop}\label{claim fixed} If the principal isotropy group $H$ of $\fN_{n_1,n_2}^l$ is isomorphic to $\Z_d$  for $d \geq 3$, then the fixed point set $(\fN_{n_1,n_2}^l)^H$ is a disjoint union of two copies of lens spaces $\Sph^3/\Z_l$.
 \end{prop}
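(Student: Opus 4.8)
The plan is to compute $(\fN_{n_1,n_2}^l)^H$ directly from the presentation $\fN_{n_1,n_2}^l=\SU(2)\times_{\Sph^1}\Sph^3$, taking as a concrete model of the principal isotropy group the subgroup $H=\Z_d\subset\Sph^1\subset\SU(2)$, to which every principal isotropy group is conjugate (so the diffeomorphism type of the fixed set is independent of the choice of representative). Write $\rho(x)=\diag(x^{n_1},x^{n_2})$ for the $\Sph^1$-action on $\Sph^3\subset\C^2$, set $q=(z_1,z_2)$, and let $C_q:=\{x\in\Sph^1:\rho(x)q=q\}$, which equals $\Z_d$, $\Z_{n_1}$ or $\Z_{n_2}$ according as both, only the first, or only the second coordinate of $q$ is nonzero. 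One checks directly that the isotropy group of $[(p,q)]$ for the $\SU(2)$-action is $\{px^lp^{-1}:x\in C_q\}$; since $\gcd(l,n_i)=1$ forces $|C_q|$ to be coprime to $l$, the $l$-th power map is a bijection of the cyclic group $C_q$, and hence this isotropy group equals $pC_qp^{-1}$. As $pC_qp^{-1}$ is cyclic with $d\mid|C_q|$, its unique order-$d$ subgroup is $p\Z_dp^{-1}$, so $[(p,q)]\in(\fN_{n_1,n_2}^l)^H$ if and only if $\Z_d\subseteq pC_qp^{-1}$, i.e.\ if and only if $p\in N_{\SU(2)}(\Z_d)$.

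The first substantive step is then the normalizer computation $N_{\SU(2)}(\Z_d)=\Pin(2)=\Sph^1\cup j\Sph^1$, which uses $d\geq 3$. The inclusion $\supseteq$ is immediate: $\Sph^1$ centralizes $\Z_d$, and conjugation by $j$ sends $e^{2\pi ik/d}$ to its inverse. For $\subseteq$, put $\zeta=e^{2\pi i/d}$; since $d\geq 3$ we have $\zeta\neq\pm 1$ and $\tr\zeta\in(-2,2)$, so the only elements of $\Sph^1$ with trace $\tr\zeta$ are $\zeta$ and $\bar\zeta$. If $p$ normalizes $\Z_d$, then $p^{-1}\zeta p\in\Z_d\subset\Sph^1$ has trace $\tr\zeta$, hence $p^{-1}\zeta p\in\{\zeta,\bar\zeta\}$; in the first case $p\in Z_{\SU(2)}(\zeta)=\Sph^1$, and in the second $\zeta=(pj^{-1})\zeta(pj^{-1})^{-1}$ (because $\bar\zeta=j^{-1}\zeta j$), so $pj^{-1}\in\Sph^1$ and $p\in\Sph^1 j=j\Sph^1$. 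Combining with the previous paragraph,
$$(\fN_{n_1,n_2}^l)^H=\{\,[(p,q)]:p\in\Pin(2),\ q\in\Sph^3\,\}.$$

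It remains to identify this set. Because right multiplication by any $x^l\in\Sph^1$ preserves each of the cosets $\Sph^1$ and $j\Sph^1$, the set above is the disjoint union of the two pieces $A_+$ and $A_-$ coming from $p\in\Sph^1$ and $p\in j\Sph^1$; each is the image of a compact set under the quotient map, hence closed, and since they are complementary they are also open. For $A_+$, projection onto the first factor is an $\Sph^1$-equivariant map $\Sph^1\times\Sph^3\to\Sph^1$ onto the transitive $\Sph^1$-space $\Sph^1\cong\Sph^1/\Z_l$ (the action $x\cdot p=px^l$ on the base is transitive with stabilizer $\Z_l$ at $1$), so $A_+=(\Sph^1\times\Sph^3)/\Sph^1$ is the quotient of the fiber $\{1\}\times\Sph^3\cong\Sph^3$ by the induced $\Z_l$-action $\zeta\cdot q=\rho(\zeta)q$. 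Since $\gcd(l,n_1)=\gcd(l,n_2)=1$, this $\Z_l$-action on $\Sph^3$ is free, so $A_+$ is the lens space $\Sph^3/\Z_l$ (explicitly $L(l;n_1,n_2)$); the same argument applied to the fiber over $j$ gives $A_-\cong\Sph^3/\Z_l$. Therefore $(\fN_{n_1,n_2}^l)^H$ is a disjoint union of two copies of $\Sph^3/\Z_l$.

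The main obstacle is the passage, in the first two paragraphs, from being fixed by $H$ to the condition $p\in\Pin(2)$; this is exactly where the hypothesis $d\geq 3$ enters (for $d\leq 2$ the subgroup $\Z_d$ is central in $\SU(2)$, its normalizer is all of $\SU(2)$, and the fixed set would be the whole manifold), together with the coprimality conditions $\gcd(l,n_i)=1$. Once that identification is in place the description of each component as $\Sph^3/\Z_l$ is routine, and since $\pi_1(\Sph^3/\Z_l)\cong\Z_l$ this confirms that $l$ is an equivariant invariant of $\fN_{n_1,n_2}^l$ when $d\geq 3$.
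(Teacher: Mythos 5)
Your proof is correct and follows essentially the same route as the paper: identify the fixed set as $\{[(p,q)]:p\in N(\Z_d)\}=\Pin(2)\times_{\Sph^1}\Sph^3$ and then recognize each of the two components $\Sph^1\times_{\Sph^1}\Sph^3$ as the quotient of $\Sph^3$ by the free $\Z_l$-action $\xi\cdot(z_1,z_2)=(\xi^{n_1}z_1,\xi^{n_2}z_2)$. You merely supply more detail than the paper at the steps it labels ``easy'' (the isotropy computation $G_{[(p,q)]}=pC_qp^{-1}$ and the normalizer computation $N(\Z_d)=\Pin(2)$ for $d\geq 3$).
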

\begin{proof} Let $H \simeq \Z_d$ be the subgroup of $\SU(2)$ generated by $e^{2\pi i/d}$ and notice that $N(H) = N(\Sph^1)$, where $H \subset \Sph^1$ and $N(K)$ is the normalizer of the subgroup $K \subset G$ in $G$. This easily implies that an element $[(p,(z_1,z_2))]$ belongs to $M^H$ if, and only if, $p \in N(H)$. Thus $(\fN_{n_1,n_2}^l)^H= N(H)\times_{\Sph^1}\Sph^3$. Therefore
$$(\fN_{n_1,n_2}^l)^H= (\Sph^1 \times_{\Sph^1} \Sph^3) \bigcup (\Sph^1 \times_{\Sph^1} \Sph^3),$$
since $N(\Z_d) \simeq \Pin(2)$.

Notice that every $[(y,(z_1,z_2))] \in \Sph^1 \times_{\Sph^1}\Sph^3$ has a representative with $y=1$. 
In fact, $[(y,(z_1,z_2))] = [(1,(\xi^{n_1}\zeta^{n_1} z_1,\xi^{n_2}\zeta^{n_2} z_2))]$ where $\zeta^l = y$ with $\arg(\zeta)<2\pi/l$ and $\xi^l=1$. Now define $\hat{z_1} =\zeta^{n_1} z_1$ and $\hat{z_2} =\zeta^{n_2} z_2$, as a new parametrization for the 3-sphere.  Thus $\Sph^1 \times_{\Sph^1} \Sph^3$ is diffeomorphic to the quotient of $\Sph^3$ by the $\Z_l$-action $\xi\cdot(z_1,z_2) = (\xi^{n_1} z_1,\xi^{n_2} z_2).$
Hence $\Sph^1 \times_{\Sph^1} \Sph^3$ is a Lens space $\Sph^3/\Z_l$. 
\end{proof}

\begin{rem} Clearly \pref{claim fixed} only has an assumption on the principal isotropy group, so the fixed point set $M^H$ when $M$ is either $\fN_{0,0}^1$ or $\fN_{0,n}^1$ is the disjoint union of two copies of a 3-sphere.
\end{rem}

\section{Actions without singular orbits and the proof of Theorem C} \label{sec exceptional}
For the proof of \tref{thm exceptional SU(2)} we first verify that the quotient is homeomorphic to a two or three-dimensional sphere. Then we show that, if the action has only one isotropy type, the circle and cyclic groups are  the only possible isotropy subgroups of the action and classify the actions with exactly one isotropy type $(\SO(2))$. 
We conclude that the action has at most two exceptional orbits and that the pair $(H,K)$ of principal and exceptional isotropy groups must be $(\Z_d, \Z_n)$, $(\D_2, \T)$ or $(\Dic_2, \T^*)$. Then, we use the Slice Theorem to construct $M$ as a union of two neighborhoods of the exceptional orbits and compute the fundamental group of the union to see that only cyclic isotropies can occur for simply-connected $G$-manifolds.
The actions constructed in this way depend on three integer parameters: one comes from the clutching function,  while the other two correspond to the isotropy representations around the exceptional orbits.
We finish the proof by establishing a one-to-one correspondence between the distinct general actions constructed and the $\SU(2)$-manifolds~$\fN_{n_1,n_2}^l$ for $n_1n_2 \neq 0$.  

Let us understand the orbit space.
\begin{lem}\label{lem ex qt} Let $G \hra M^5$ be an action without singular orbits. Then either it has only one isotropy type $(\SO(2))$ and $M/G \simeq \Sph^3$, or the isotropy groups are finite and $M/G \simeq \Sph^2$.
\end{lem}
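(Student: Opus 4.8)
The plan is to argue according to $\dim H$, where $H$ denotes the principal isotropy group. Since $G$ has no closed subgroup of dimension $2$ and the action is nontrivial (so $H\neq G$), we have $\dim H\in\{0,1\}$; as there are no singular orbits, every isotropy group $K$ satisfies $\dim K=\dim H$, so the cohomogeneity equals $5-(3-\dim H)=2+\dim H$.

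If $\dim H=0$, all isotropy groups are finite and the cohomogeneity is $2$. By \pref{prop quotient} the orbit space $M/G$ is then a simply connected topological surface whose boundary --- which would consist of singular orbits --- is empty, so $M/G\simeq\Sph^2$.

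If $\dim H=1$, then $H_0$ is a circle, which after conjugation I take to be $\SO(2)\subseteq G$, and the cohomogeneity is $3$; if $G=\SU(2)$ then $-1\in\SO(2)=H_0$ lies in every isotropy group (being central, with $H$ subconjugate to all of them), hence acts trivially, and we may pass to $\SO(3)$. First I would note that at every point $p$ with isotropy $K$ the circle $K_0$ acts trivially on the three-dimensional slice $D^3$: otherwise $K_0\cong\SO(2)$ would act on $D^3$ with a nontrivial two-dimensional subrepresentation, its generic stabilizer there would be finite, and $\dim H=0$, a contradiction. Hence $K$ acts on $D^3$ through $K/K_0\hookrightarrow N_G(\SO(2))/\SO(2)\cong\Z_2$, so $K$ is conjugate to $\SO(2)$ or to $N_G(\SO(2))$ ($=\O(2)$ for $G=\SO(3)$). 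The crux is to exclude $N_G(\SO(2))$ as an isotropy group. Were it the principal isotropy, then, using $gHg^{-1}\subseteq K$ for some $g$ and $K\subseteq N_G(K_0)$ for every isotropy $K$, all orbits would have type $(N_G(\SO(2)))$, so $M\to M/G$ would be a fiber bundle with fiber $G/N_G(\SO(2))\cong\RP^2$ over the closed simply connected $3$-manifold $M/G$, which by the Poincaré conjecture is $\Sph^3$; since $\RP^2$-bundles over $\Sph^3$ are classified by $[\Sph^3,B\mathrm{Diff}(\RP^2)]=\pi_2(\SO(3))=0$, this would give $M\cong\RP^2\times\Sph^3$, contradicting $\pi_1(M)=1$. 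If instead $H=\SO(2)$ but some orbit had isotropy $N_G(\SO(2))$, then by the slice theorem and the triviality of the $\SO(2)$-action on the slice an invariant neighbourhood of that orbit would be a twisted $D^3$-bundle over $\RP^2\cong G/N_G(\SO(2))$, hence would have fundamental group $\Z_2$; a van Kampen argument (equivalently, the fiberwise antipodal involution of the $\Sph^2$-bundle structure on the principal part of $M$ extends across such orbits and yields a connected nontrivial double cover of $M$) contradicts $\pi_1(M)=1$. Therefore the unique isotropy type is $(\SO(2))$, $M\to M/G$ is a fiber bundle with fiber $G/\SO(2)\cong\Sph^2$ over the closed simply connected $3$-manifold $M/G$, and $M/G\simeq\Sph^3$ by the Poincaré conjecture.

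The dimension count, the application of \pref{prop quotient}, and the appeals to the classification of surfaces and to the Poincaré conjecture are routine. The only real difficulty is excluding the disconnected isotropy group $N_G(\SO(2))$: the principal-isotropy sub-case collapses to the triviality of an $\RP^2$-bundle over $\Sph^3$, but the exceptional sub-case genuinely uses $\pi_1(M)=1$. The cleanest way I see to organise it is through the induced (a priori non-free) action of $N_G(\SO(2))/\SO(2)\cong\Z_2$ on the closed $3$-manifold $M^{\SO(2)}$, whose quotient is $M/G$: a zero-dimensional fixed-point set would force $M/G$ to fail to be a manifold, against \pref{prop quotient}, while a positive-dimensional one produces a $\Z_2$ in $\pi_1(M)$ coming from a neighbouring $\RP^2$-orbit.
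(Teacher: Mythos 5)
Your proof is correct, and it diverges from the paper's at the one genuinely delicate point. Both arguments treat the cohomogeneity-two case identically (via \pref{prop quotient}: no singular orbits, hence no boundary, hence $\Sph^2$). In cohomogeneity three the paper is much terser: it observes that the only possible non-principal isotropy is $\O(2)$ (resp.\ $\Pin(2)$), so the corresponding orbit would be $\RP^2$, and it kills this with a single citation to Bredon (orbits of maximal dimension are orientable), after which \pref{prop quotient}(c) finishes. You instead make the reduction $K\subseteq N_G(K_0)\cong\O(2)$ explicit via the slice representation (the paper asserts it without proof) and then exclude $\O(2)$ by two topological arguments that use $\pi_1(M)=1$ directly: bundle classification when $(\O(2))$ is the principal type, and a covering-space/van Kampen argument when it is exceptional. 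Both of your exclusions are sound, but note that in the first one the detour through $\Diff(\RP^2)\simeq \SO(3)$ is unnecessary: since $\pi_2(\Sph^3)=0$, the homotopy exact sequence of $\RP^2\to M\to M/G$ already injects $\Z_2=\pi_1(\RP^2)$ into $\pi_1(M)$. In the second, your sketch is the only place where real work is hidden; the cleanest way to complete it is the double cover you allude to, realized concretely as $G\times_{\SO(2)}M^{\SO(2)}\to M$, $[g,f]\mapsto gf$: this is a two-fold covering of all of $M$ (the slice at an $\O(2)$-point is $\SO(2)$-fixed while $\fg/{\mathfrak o}(2)$ is not, so the map is a local diffeomorphism), and over a tube $G\times_{\O(2)}D^3$ around an exceptional orbit it restricts to the connected cover $\Sph^2\times D^3$, so it is globally nontrivial and $\pi_1(M)\neq 1$ --- this handles all three possible reflection types at once, including the one you dispatch via the manifold condition on $M/G$. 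In exchange for being longer, your argument is self-contained where the paper leans on Bredon, and it makes visible exactly where simple connectivity is used.
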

\begin{proof}
 By Proposition \ref{prop quotient}, the orbit space is homeomorphic to a compact, simply-connected 2 or 3-dimensional  manifold, possibly with boundary. If the action has only one isotropy type, the quotient is a base space of a fiber bundle, thus it is a simply-connected topological two or three manifold without boundary. In any case $M/G$ is a topological sphere. If the action has exceptional orbits the quotient is two dimensional without boundary. In fact, if the principal orbits have codimension 3, the principal and exceptional isotropy groups have to be $H=\SO(2)$ and $K=\O(2)$ (or $K = \Pin(2)$ if $G=\SU(2)$). Then the orbits are $\Sph^2$ and~$\RP^2$.
 But the orbits of maximal dimension are orientable (see \cite{Bredon} p. 188), so there is no cohomogeneity 3 action with exceptional orbits.
On the other hand, in the cohomogeneity two case, by \pref{prop quotient} (\ref{B1}), the quotient does not have boundary, so $M/G \simeq \Sph^2$ topologically. 
\end{proof}

\subsection{Actions with only one (noncyclic) isotropy type}\label{Sec one SO(3)}
In this section we show that a simply-connected $G$-manifold with only one orbit type has isotropy either $\Z_m$ or $\SO(2)$, classifying the actions in the second case. The classification of the actions with only one isotropy type $(\Z_m)$ will be done later together with the classification of $G$-manifolds with exceptional orbits.  
%We use general results about actions with only one orbit type (see Section \ref{sec UOT}) to see how many examples of such actions exist for each subgroup of $G$. 

For general $G$-actions with only one isotropy type we have the following:
 
 \begin{prop}\label{lem counting UOT} There are as many equivalence classes of $G$-manifolds with only one isotropy type $(H)$ and orbit space $M/G \simeq \Sph^n$ as elements in $\pi_{n-1}(\gC_H)$.
\end{prop}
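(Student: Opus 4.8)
\emph{Proof proposal.} The plan is to reduce the statement, by way of the Slice Theorem, to the elementary classification of principal bundles over a sphere. Recall that $\gC_H = N(H)/H$ is precisely the structure group of the fibre bundle $G/H \to M \to M/G$ that one obtains when the action has a single orbit type. The first step is to identify such a $G$-manifold with a principal $\gC_H$-bundle over $M/G$. If $G$ acts on $M$ with only the isotropy type $(H)$, then the fixed point set $P := M^H$ is a compact submanifold on which $\gC_H$ acts \emph{freely}: if a lift $n \in N(H)$ of $\bar n \in \gC_H$ fixes $p \in P$ then $n \in G_p$, and since $H \subseteq G_p$ with $G_p$ of type $(H)$ and both groups compact one gets $H = G_p$, so $\bar n$ is trivial. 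As $P/\gC_H = M/G$, the projection $P \to M/G$ is a principal $\gC_H$-bundle, and the orbit map $(g,p) \mapsto gp$ descends to a $G$-equivariant diffeomorphism $(G/H)\times_{\gC_H} P \to M$. Conversely, for any principal $\gC_H$-bundle $Q \to B$ the associated bundle $(G/H)\times_{\gC_H} Q$, with $G$ acting on the first factor, is a $G$-manifold all of whose isotropy groups are conjugate to $H$ (this last point uses the freeness of the $\gC_H$-action on $Q$ and should be checked carefully). These two constructions are mutually inverse; the underlying facts are in \cite{Bredon}.

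Next I would verify that this correspondence respects equivalence. A $G$-equivariant diffeomorphism $F\colon M_1 \to M_2$ of two single-orbit-type $G$-manifolds carries $M_1^H$ onto $M_2^H$ and restricts there to a $\gC_H$-equivariant diffeomorphism, i.e. to an isomorphism of principal $\gC_H$-bundles covering the induced diffeomorphism $M_1/G \to M_2/G$; conversely every such bundle isomorphism induces an equivariant diffeomorphism of the associated $G$-manifolds. Hence equivalence classes of $G$-manifolds with only the orbit type $(H)$ and orbit space $\simeq \Sph^n$ are in bijection with isomorphism classes of principal $\gC_H$-bundles over $\Sph^n$.

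Finally, I would invoke the clutching description of bundles over spheres: writing $\Sph^n = D^n_+ \cup_{\Sph^{n-1}} D^n_-$, every principal $\gC_H$-bundle over $\Sph^n$ is obtained by gluing the two trivial bundles along a clutching map $\Sph^{n-1} \to \gC_H$, and two such bundles are isomorphic iff the clutching maps are homotopic; this produces the bijection with $\pi_{n-1}(\gC_H)$, which is the same bundle-over-spheres classification already used (via \cite{steenrod}) in Section~\ref{examples}.

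I expect the delicate point to be not the three steps above but the precise accounting of the equivalence relation once one has passed to $\pi_{n-1}(\gC_H)$: a priori one works with free homotopy classes of clutching maps, on which $\pi_0(\gC_H)$ acts by conjugation, and one may also post-compose with a self-diffeomorphism of $\Sph^n$. One must therefore check, in the (few and explicit) cases of $\gC_H$ relevant to \tref{thm exceptional SU(2)}, that the resulting count really matches $\pi_{n-1}(\gC_H)$, reconciling it where necessary with the explicit symmetries of the $\fN_{m,n}^l$ recorded in \rref{rem sign of l}.
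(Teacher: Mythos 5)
Your proposal is correct and follows essentially the same route as the paper: the paper simply cites Borel for the correspondence between single-orbit-type $G$-manifolds and principal $\gC_H$-bundles (which you reconstruct by hand via $M^H$) and then cites Corollary 18.6 of \cite{steenrod} for the classification over spheres. The only wrinkle you flag at the end --- that for disconnected $\gC_H$ one a priori counts free homotopy classes of clutching maps modulo the $\pi_0(\gC_H)$-action --- is the same issue the paper disposes of tersely by reducing a principal $K$-bundle to its identity-component bundle, so your caveat is apt but does not change the approach.
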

\begin{proof}
It is known (c.f. A. Borel \cite{borel}) that given a closed subgroup $H \subset G$, there is a bijective correspondence between the set of isomorphism classes of principal bundles $\gC_H \ra P \ra B$, where $\gC_H=N(H)/H$, and the set of equivalence classes of $G$-manifolds $M$ with unique orbit type $(G/H)$. 
The set of isomorphism classes of $F$-bundles over $\Sph^n$ is in bijection with $\pi_{n-1}(F)$ whenever $F$ is arcwise connected (c.f.~\cite{steenrod}, Corollary~18.6). Now, the proposition follows from the fact that for a Lie group $K$, the principal $K$-bundle is determined by the $K_{o}$-fiber bundle over the same basis, where $K_o$  is the identity component of~$K$. \end{proof}

By \pref{lem counting UOT} the number of $G$-manifolds with only one orbit type equal to $(G/H)$ and quotient an $n$-sphere is the order of the $(n-1)$-th homotopy group of $N(H)/H$, where $N(H)$ is the normalizer of $H$ in $G$. These homotopy groups are presented in Table \ref{tab SOUOT} (see~\cite{Hudson1977} for $\SO(3)$ and \cite{asoh} for $\SU(2)$), for $G=\SO(3)$ or $\SU(2)$.
\hspace{-0,5cm}\begin{table}[h]
\begin{tabular}{l|cccccccccc}
\multicolumn{11}{c}{{\bf G = SO(3)}}\\
{\bf H} &   $\{1\}$ & $\Z_2$ & $\Z_m$        & $\D_2$    & $\D_m$   & $\T$     &   $\I$   &   $\O$   & $\SO(2)$  & $\O(2)$ \\
{\bf N(H)}&  $\SO(3)$  &  $\O(2)$ &  $\O(2)$       &  $\O$       & $\D_{2m}$ & $\O$     &   $\I$   &  $\O$   &  $\O(2)$  & $\O(2)$  \\
{\bf N(H)/H}& $\SO(3)$ & $\SO(2)$  & $\O(2)$  & $\D_3$    &  $\Z_2$  & $\Z_2$&$\{1\}$& $\{1\}$& $\Z_2$& $\{1\}$\\
{\bf $\pi_{n-1}$(N(H)/H)}& $\Z_2$ & $\Z$  & $\Z $  & $\{1\}$    &  $\{1\}$  & $\{1\}$ & $\{1\}$& $\{1\}$& $\{1\}$& $\{1\}$\\
\multicolumn{11}{c}{}\\
\multicolumn{11}{c}{{\bf G = SU(2)}}\\
{\bf H}&$\{1\}$ &$\Z_2$ &$\Z_m$& $\Dic_2$ &$\Dic_m$ & $\T^*$ & $\I^*$ & $\O^*$ & $\SO(2)$            & $\Pin(2)$\\
{\bf N(H)}&$\SU(2)$& $\SU(2)$ &  $\Pin(2)$  & $\O^*$  & $\Dic_{2m}$        & $\O^*$          & $\I^*$         &    $\O^* $    & $\Pin(2)$           & $\Pin(2)$\\
{\bf N(H)/H}&$\SU(2)$& $\SO(3)$ & $\Pin(2)$ & $\D_3$  & $\Z_2$        &    $\Z_2$       &   $\{1\}$   &   $ \{1\}$  & $\Z_2$& $\{1\}$\\
{\bf $\pi_{n-1}$(N(H)/H)}& $\{1\}$ & $\Z_2$  & $\Z $  & $\{1\}$ & $\{1\}$  &  $\{1\}$ &  $\{1\}$ &  $\{1\}$ & $\{1\}$ & $\{1\}$
\end{tabular}
\caption{{\small Here $m\geq 3$ and $n$ is the cohomogeneity of the action, i.e. $n=2 + \dim H$}}
\label{tab SOUOT}
\end{table}

The following are the simplest examples.
\begin{example}\label{ex trivial} Let $H$ be a Lie subgroup of $G$ and $X$ be a manifold. Define a $G$-action on $(G/H)\times X$ by $g\cdot (kH,x) = (gkH,x)$. This action has unique isotropy type $(H)$ and its orbit space is $X$. 
\end{example}

From Table \ref{tab SOUOT} we see that there are two distinct free $G$-manifolds for $G=\SO(3)$ but only one if $G=\SU(2)$. One of the two free $\SO(3)$-manifolds is $\SO(3) \times \Sph^2$ which is not simply-connected, the other is $\fN_{2,2}^1$, hence simply-connected. The free $\SU(2)$-action is $\fN_{1,1}^0$, see \eref{ex 1missed} (\ref{ex 1missed free}).

The $\SO(3)$-action with unique isotropy type $(\SO(2))$ is $\fN_{0,0}^1$. It is the linear $\SO(3)$-action in \eref{ex 1missed} (\ref{ex 1missed S1}). Notice that $\SU(2)$ is a rank one Lie group whose center is~$\Z_2$, thus all the circle subgroups of $\SU(2)$ contain $\Z_2$, hence the $\SU(2)$-action with unique isotropy $(\SO(2))$ also corresponds to $\fN_{0,0}^1$.
 Finally, all the \mbox{$G$-manifolds} with unique isotropy $H$ equal to $\D_m$ with $m \geq 2$, $\T$, $\I$, $\O$ or $\O(2)$, if $G=\SO(3)$ and for $H$ isomorphic to $\Dic_m$ with $m \geq 2$, $\T^*$, $\I^*$, $\O^*$ or $\Pin(2)$ if $G=\SU(2)$ are described by \eref{ex trivial}. But none of them is simply-connected since in all these cases the fundamental group of $G/H$ is nontrivial.

For each $m\geq 3$ there are infinitely many examples of $G$-manifolds with unique isotropy type~$(\Z_m)$. The same holds for $m=2$ and $G=\SO(3)$, but for $G=\SU(2)$ there are exactly two such actions, they are ineffective and coincide with the free $\SO(3)$-manifolds. We will see in the next section that~$\fN_{m,m}^l$ are precisely the examples with isotropy~$\Z_m$.

\subsection{Actions with exceptional orbits or unique cyclic isotropy type}\label{subsec exceptional}
In this section we conclude the proof of Theorem \ref{thm exceptional SU(2)}. We will  classify the $G$-manifolds with exceptional orbits and actions with only one isotropy type $(\Z_m)$. The latter can be seen as a particular case of the former when the principal and exceptional isotropy groups coincide. The condition of simply-connectedness imposes strong restrictions on the isotropies (c.f., \pref{lem exceptional}) and limits the number of exceptional orbits to two (c.f., \lref{lem number except orbits}). In this situation we can construct $M$ as a union of the neighborhoods of the exceptional orbits using the Slice Theorem.

\begin{prop} \label{lem exceptional} If the $G$-manifold $M$ has exceptional orbits, then the pair of principal and exceptional isotropy types, $(H,K)$, is either $(\Z_{d},\Z_m)$, $(\D_2, \T)$ or  $(\Dic_2,  \T^*)$. Moreover, exceptional orbits are isolated and $H$ is the kernel of the slice representation of~$K$.
\end{prop}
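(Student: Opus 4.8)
The statement has two parts: (i) the list of admissible pairs $(H,K)$ is $(\Z_d,\Z_m)$, $(\D_2,\T)$, $(\Dic_2,\T^*)$; and (ii) exceptional orbits are isolated and $H$ equals the kernel of the slice representation of $K$. I would begin with (ii) since it is largely local and feeds into (i). By definition $K$ is exceptional means $\dim K=\dim H$ but $K$ has more components; by the Slice Theorem a neighborhood of the exceptional orbit $Gp$ is $G\times_K D$ with $D$ a disk in the normal space, and since the orbit has codimension equal to $\dim D$ and the action is cohomogeneity two here (by \lref{lem ex qt}, exceptional orbits force finite isotropy and $M/G\simeq\Sph^2$), $D$ is a $2$-disk. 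So $K$ acts on $D^2$ through a homomorphism $K\to\O(2)$; its kernel $K'$ is normal in $K$, acts trivially on the whole slice, hence fixes a neighborhood of $Gp$ and is therefore contained in the principal isotropy group up to conjugacy. Since $\dim K'=\dim K=\dim H$ and $H$ is smallest, $K'$ and $H$ have the same dimension; a standard argument (the fixed-point set of $K'$ is open and the principal isotropy along it is $K'$, which must then be conjugate to $H$, and $H\subseteq K$ forces $H\subseteq K'$) shows $K'=H$. This also shows the exceptional orbits are isolated: the points with isotropy strictly larger than $H$ form the image in $M/G$ of the branch locus of the $2$-fold-type behavior, which for a cohomogeneity-two action on a surface-quotient with no boundary is a discrete set of interior points.

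\textbf{Determining the pairs.} With $H=\ker(K\to\O(2))$ established, I now classify. The quotient $K/H$ embeds in $\O(2)$, so $K/H$ is cyclic or dihedral; but the slice representation must be such that the orbit $K\cdot(\partial D^2)$ sweeps out a circle's worth of principal points with isotropy exactly $H$, which forces $K/H$ to act on $S^1$ without fixed points and with the principal isotropy of that restricted action trivial — this pins $K/H$ down to $\Z_2$ (a reflection would create a one-dimensional fixed set of points with isotropy $H\rtimes\Z_2$, contradicting that $H$ is principal; a rotation of order $\geq 3$ would make the exceptional orbit non-exceptional in the wrong way, or rather would be allowed only as $\Z_2$ after checking the orbit-type stratification is consistent). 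Hence $K$ is an index-two extension of $H$. Now I run through the subgroup lattices of $\SO(3)$ and $\SU(2)$ recorded in Section~\ref{examples}: for $G=\SO(3)$, the pairs $(H,K)$ with $H$ normal of index $2$ in $K$, $H$ finite, and $K$ itself arising as an isotropy group of a cohomogeneity-two action, are exactly $(\Z_d,\Z_{2d})$ (so $m=2d$), and $(\D_2,\T)$ — noting $\D_2\triangleleft\T$ with $\T/\D_2\cong\Z_3$ does \emph{not} give index two, so the correct pair must be re-derived from which order-two extensions actually occur; one checks $\D_2$ has index-two supergroups $\D_4$ and $\T$ is \emph{not} one of them, so I should instead identify the genuine exceptional pairs by the slice condition directly, and $\T$ enters because $\D_2$ sits inside $\T$ as the unique normal subgroup with the right $\Z_2$-slice behavior after passing to the effective action. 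For $G=\SU(2)$ one lifts via the double cover $\phi$: $\Dic_2=\phi^{-1}(\D_2)$ and $\T^*=\phi^{-1}(\T)$, giving $(\Dic_2,\T^*)$; the cyclic case $(\Z_d,\Z_{2d})$ pulls back to itself among the $\Z_k\subset\SU(2)$.

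\textbf{Eliminating the other candidates.} The remaining potential index-two pairs — things like $(\Z_m,\D_m)$, $(\D_m,\D_{2m})$, $(\T,\O)$, $(\Dic_m,\Dic_{2m})$, $(\T^*,\O^*)$ — must be ruled out. For the dihedral and octahedral supergroups, the point is that $K/H$ acts on the slice $D^2$ by a \emph{reflection} (or a group containing one), producing a fixed arc in the slice whose $G$-orbit is a codimension-one-in-$M$ family of points whose isotropy is a proper supergroup of $H$ of the \emph{same dimension} but sitting as an \emph{exceptional} set of positive dimension — contradicting the established fact that exceptional orbits are isolated, or else producing a singular orbit, contradicting the hypothesis that there are none. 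Alternatively one invokes \pref{prop quotient}(\ref{B1}): exceptional orbits cannot accumulate and the branch points of $M/G\to(\text{quotient of }S^2)$ must be consistent with $\chi$, but the cleanest route is the slice-representation obstruction. I expect the main obstacle to be exactly this bookkeeping: verifying for each near-miss pair that the only $\Z_2$-slice-representations compatible with ``$H$ is principal and the orbit is isolated-exceptional'' are the three listed, which requires carefully distinguishing, inside each $K$, the normal index-two subgroups and checking the induced action on $D^2$ is a free $\Z_2$-rotation rather than a reflection. Once that local analysis is done, the global statement and the identification $H=\ker$ follow immediately from the Slice Theorem as above.
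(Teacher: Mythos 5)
Your reduction to the slice representation $\rho\colon K\to\O(2)$, the identification $H=\ker\rho$, and the observation that a reflection in $\rho(K)$ would produce a positive\-/dimensional family of exceptional orbits lying on the boundary of $M/G$ (contradicting \pref{prop quotient} (\ref{B1})) all match the paper's argument. The fatal step is the claim that this ``pins $K/H$ down to $\Z_2$.'' A free action of $K/H$ on the circle $\partial D^2$ only forces $K/H$ to be cyclic, $K/H\cong\Z_n$, with no bound on $n$; indeed the main examples $\fN_{n_1,n_2}^l$ have principal isotropy $\Z_d$ and exceptional isotropies $\Z_{dq_1},\Z_{dq_2}$ with $q_1,q_2$ arbitrary coprime integers, so $[K:H]$ is not $2$ in general. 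Your own text exposes the inconsistency: you need the pairs $(\D_2,\T)$ and $(\Dic_2,\T^*)$, yet $\T/\D_2\cong\T^*/\Dic_2\cong\Z_3$, which is incompatible with an index-two constraint; the passage beginning ``so the correct pair must be re-derived'' acknowledges this but never resolves it, so the classification of the admissible pairs is not actually carried out.

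The paper proceeds differently at exactly this point: having shown that $\rho(K)\cong K/N$ is a rotation group $\Z_n$ (with $n\geq 3$, using that $M/G\simeq\Sph^2$ has no boundary), it invokes a purely group-theoretic statement, \lref{lem except isotropy}: if $K$ is a finite subgroup of $\SO(3)$ or $\SU(2)$ and $N\triangleleft K$ with $K/N\cong\Z_n$ for some $n\geq 3$, then $(N,K)$ is $(\Z_d,\Z_m)$, $(\D_2,\T)$ or $(\Dic_2,\T^*)$ --- a case-by-case check through the subgroup lattices. The identity $H=N$ then follows because $N\subseteq H\subsetneq K$ and in the non-cyclic cases $[K:N]=3$ is prime, and isolatedness follows because the rotation group $\Z_n$ fixes only the origin of the slice. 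To repair your argument you would need to replace the index-two claim by this cyclic-quotient classification (and, in the cyclic case, allow exceptional isotropy $\Z_{dq}$ for arbitrary $q$, not just $q=2$).
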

In order to prove \pref{lem exceptional} the following will be essential.
\begin{lem} \label{lem except isotropy} Let $K \subset G$ be a finite subgroup with a normal subgroup $N \triangleleft K$ such that $K/N \simeq \Z_n$ for some $n \geq 3$. Then the pair $(N , K)$ is either $(\Z_{d},\Z_m)$, $(\D_2, \T)$ or  $(\Dic_2,  \T^*)$.
\end{lem}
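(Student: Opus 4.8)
The statement is a purely group-theoretic fact about finite subgroups $K$ of $G = \SO(3)$ or $\SU(2)$: if $K$ admits a normal subgroup $N$ with cyclic quotient $K/N \simeq \Z_n$, $n \geq 3$, then $(N,K)$ is one of $(\Z_d, \Z_m)$, $(\D_2, \T)$ or $(\Dic_2, \T^*)$. The plan is to run through the classification of finite subgroups of $\SO(3)$ and $\SU(2)$ recalled earlier in Section~\ref{examples}, and for each candidate $K$ check which normal subgroups $N$ have cyclic quotient of order at least $3$. First I would treat the $\SO(3)$ case and then lift to $\SU(2)$ via the double cover $\phi \colon \SU(2) \to \SO(3)$, observing that normal subgroups and quotients behave well under $\phi$ (a normal subgroup of $K \subset \SU(2)$ containing $\Z_2 = \ker\phi$ corresponds to a normal subgroup of $\phi(K) \subset \SO(3)$ with the same quotient, while subgroups of odd order are cyclic and handled directly).

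\textbf{Case analysis over $\SO(3)$.} The finite subgroups of $\SO(3)$ are $\Z_m$, $\D_m$, $\T$, $\O$, $\I$. I would go through them:
\begin{itemize}
\item[$\bullet$] $K = \Z_m$ cyclic: every subgroup is normal, every quotient is cyclic $\Z_{m/d}$, so we get pairs $(\Z_d, \Z_m)$ with $m/d \geq 3$. This is the first family.
\item[$\bullet$] $K = \D_m$ dihedral of order $2m$: the normal subgroups are the cyclic subgroups of the rotation part $\Z_m$ (quotient a dihedral group, never cyclic of order $\geq 3$ unless the rotation part is killed, giving quotient $\Z_2$), together with possibly index-$2$ subgroups. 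All proper quotients of $\D_m$ that are cyclic have order $\leq 2$, so no valid pair arises with $n \geq 3$.
\item[$\bullet$] $K = \T$, the tetrahedral group $A_4$ of order $12$: its only proper nontrivial normal subgroup is the Klein four-group $\D_2 = V_4$, with quotient $A_4/V_4 \simeq \Z_3$. So $(\D_2, \T)$ is a valid pair, the second family.
\item[$\bullet$] $K = \O \simeq S_4$: the proper nontrivial normal subgroups are $V_4$ (quotient $S_3$, not cyclic) and $A_4$ (quotient $\Z_2$); neither gives a cyclic quotient of order $\geq 3$.
\item[$\bullet$] $K = \I \simeq A_5$: simple, so no proper nontrivial normal subgroup at all.
\end{itemize}
This exhausts $\SO(3)$ and produces exactly $(\Z_d,\Z_m)$ and $(\D_2,\T)$.

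\textbf{Lifting to $\SU(2)$ and finishing.} For $K \subset \SU(2)$ finite, either $K$ has odd order --- then $K$ is cyclic $\Z_m$ with $m$ odd and we are in the first family --- or $\Z_2 = \{\pm 1\} \subset K$. In the latter case, suppose $N \triangleleft K$ with $K/N \simeq \Z_n$, $n \geq 3$. If $-1 \in N$, then $\phi(N) \triangleleft \phi(K)$ in $\SO(3)$ with $\phi(K)/\phi(N) \simeq K/N \simeq \Z_n$, so the $\SO(3)$ analysis forces $(\phi(N),\phi(K)) \in \{(\Z_d,\Z_m),(\D_2,\T)\}$, and pulling back by $\phi$ gives $(N,K) \in \{(\Z_d,\Z_m),(\Dic_2,\T^*)\}$ (the cyclic case may again land in $\Z_m$ of even order with $-1$ inside). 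If $-1 \notin N$, then $N$ has odd order hence is cyclic $\Z_k$ with $k$ odd; but $N$ normal in $K$ with $-1 \in K$ forces $-1$ to normalize $\Z_k$, and $N \cdot \{\pm 1\}$ is a subgroup of order $2k$ containing $N$ with $(N\{\pm1\})/N \simeq \Z_2$, so $\Z_n \simeq K/N$ has a subgroup $\Z_2$, i.e. $n$ is even, $n \geq 4$; one then checks that $K$ itself (being one of $\Z_m, \Dic_m, \T^*, \O^*, \I^*$) with a normal subgroup of odd index... actually the cleanest route is: the preimage $\wt N := \phi^{-1}(\phi(N)) = N \cup (-N)$ is normal in $K$ of index $n/2 \geq 2$, reduce to the previous subcase applied to $\wt N$, and then identify $N$ as the odd-order "half" --- this only occurs when $\phi(K)$ is cyclic, landing us back in $(\Z_d,\Z_m)$.

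\textbf{The main obstacle.} I expect the genuinely fiddly part to be the $-1 \notin N$ subcase in $\SU(2)$: one must be careful that a normal subgroup of odd order in a binary polyhedral group is automatically central-complemented in the right way, and rule out spurious possibilities like a $\Z_3 \triangleleft \T^*$ of odd order (there is none, since the Sylow $3$-subgroups of $\T^*$, which has order $24$, are not normal). The systematic fix is to note that any $N \triangleleft K$ of odd order is contained in the (odd-order part of the) Fitting subgroup and, for each binary polyhedral $K$, to list its odd-order normal subgroups explicitly --- for $\Dic_m, \T^*, \O^*, \I^*$ these are severely constrained, and in every surviving case $K/N$ either fails to be cyclic or has order $\leq 2$, except when $K$ itself is cyclic. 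Once this bookkeeping is done, collecting the surviving pairs gives precisely $(\Z_d,\Z_m)$, $(\D_2,\T)$, $(\Dic_2,\T^*)$, completing the proof.
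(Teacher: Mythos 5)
Your overall strategy---a case-by-case sweep through the finite subgroups of $\SO(3)$ followed by a lift to $\SU(2)$ through the double cover $\phi$---is exactly what the paper intends; in fact the paper omits the proof altogether, saying only that it is ``a case-by-case analysis on the finite subgroups of $\SO(3)$ and $\SU(2)$,'' so there is no written argument to compare against beyond the strategy. Your $\SO(3)$ analysis is correct. The gap is in the $\SU(2)$ step, in the subcase $-1 \notin N$. Your reduction passes to $\wt{N} = \phi^{-1}(\phi(N)) = N \cup (-N)$, which is normal of index $n/2$ in $K$, and then invokes the subcase $-1 \in N$; but that subcase requires a cyclic quotient of order at least $3$, so the reduction says nothing when $n = 4$. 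Your fallback assertion---that for each binary polyhedral group every odd-order normal subgroup gives a quotient that is either non-cyclic or of order at most $2$ unless $K$ is cyclic---is false precisely in this uncovered range.

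Concretely, take $K = \Dic_m \subset \SU(2)$ with $m \geq 3$ odd, say $K = \langle a, j \rangle$ with $a = e^{i\pi/m}$, and let $N = \langle a^2 \rangle \simeq \Z_m$. Then $N$ is normal (it is the unique subgroup of order $m$ of the normal subgroup $\langle a\rangle$, or directly $ja^2j^{-1} = a^{-2}$), $-1 = a^m \notin N$ since $m$ is odd, and $K/N$ has order $4$ and is generated by the class of $j$, whose square is the class of $-1 = a^m$, which equals the class of $a \neq 1$; hence $K/N \simeq \Z_4$. So $(\Z_m, \Dic_m)$ satisfies the hypotheses with $n = 4 \geq 3$ but is not among the three listed pairs: the statement as written is missing this family, so no proof of it can close your gap. (The remainder of your bookkeeping is fine: $\T^*$, $\O^*$ have no normal Sylow $3$-subgroups and $\I^*/\{\pm 1\}$ is simple, so $\Dic_m$ is the only source of trouble.) To repair your write-up you must either add $(\Z_m,\Dic_m)$ to the conclusion or identify the extra hypothesis that excludes it; note that in the paper's application this pair would then have to be ruled out downstream by a fundamental-group computation in the style of \lref{lem non-cyclic}, not by the present lemma.
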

The proof of the lemma is a case-by-case analysis on the finite subgroups of $\SO(3)$ and $\SU(2)$ and will be omitted. We refer to \cite{coxeter} and \cite{wolf} Section 7.1 for details about these groups.

\begin{proof}[Proof of \pref{lem exceptional}] 
As observed in \lref{lem ex qt} the isotropy groups are finite. Let $K \subset G$ be an exceptional isotropy group and consider the slice representation $\rho: K \ra \O(2)$ of $K$ with $N = \ker(\rho)$. The quotient $\R^2/(K/N) \simeq \R^2$ since $M/G$ is homeomorphic to $\Sph^2$, therefore $K/N \simeq \Z_n \subset \SO(2)$ with $n\geq 3$. So, the pairs $(N,K)$ are determined by \lref{lem except isotropy}.

We claim that $H=N$. This is clear if $K$ is cyclic. If it is not cyclic then $N$ is an index three subgroup of $K$ and since $N \subset H \varsubsetneq K$, we get $H=N$. The slice action of $K$ on $\R^2$, represented~by~$\rho$, only fixes the origin, thus the exceptional orbits are isolated.
\end{proof}

The action of the tetrahedral group and the binary tetrahedral group on the linear slice~$\R^2$ have kernel $\D_2$ and $\Dic_4$ respectively, since in our cases the principal isotropy groups are normal subgroups of $K$. So, the effective actions to be considered on $\R^2$ in theses cases are by $\Z_3\simeq\T/\D_2 = \T^*/\Dic_2$.

\begin{lem}\label{lem number except orbits} There are at most two exceptional orbits.
\end{lem}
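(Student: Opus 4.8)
The plan is to use the fundamental-group computation provided by van Kampen's theorem applied to the decomposition of $M$ into equivariant tubular neighborhoods of the exceptional orbits and the complementary "principal part." First I would recall that by \lref{lem ex qt} the quotient $M/G$ is homeomorphic to $\Sph^2$, and by \pref{lem exceptional} the exceptional orbits are isolated points in $M/G$, say $\bx_1,\dots,\bx_k$, with $H$ the kernel of each slice representation and $K_i/H \simeq \Z_3$ in the noncyclic case (acting effectively on the slice $\R^2$) or $K_i/H\simeq \Z_{n_i}$ in the cyclic case. Removing small open disks around the $\bx_i$ from $M/G \simeq \Sph^2$ leaves a planar surface $\Sigma$ with $k$ boundary circles; its preimage $M_0 = \pi^{-1}(\Sigma)$ is the total space of a principal $\gC_H = N(H)/H$-bundle over $\Sigma$ (after passing to the identity component, as in \pref{lem counting UOT}), fibered over $G/H$. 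Each deleted disk pulls back to an equivariant tube $T_i = G\times_{K_i} D^2$.

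The key step is the van Kampen computation. I would compute $\pi_1(M_0)$ and $\pi_1(T_i)$ and the maps induced by the inclusions of the gluing regions $G\times_{K_i}\Sph^1$. For a tube $T_i = G\times_{K_i}D^2$, one has $\pi_1(T_i)\cong \pi_1(G/K_i)$ since $D^2$ is contractible; the boundary $G\times_{K_i}\Sph^1$ maps to $T_i$ by killing the loop coming from $\Sph^1\subset D^2$, which is exactly the loop that generates the kernel $K_i/H$ (times whatever $\pi_1(G/H)\to\pi_1(G/K_i)$ contributes). Gluing in one such tube therefore has the effect, on $\pi_1$, of adjoining a relation that sets a $\Z_{n_i}$ (or $\Z_3$) worth of a loop to the identity — effectively dividing out by a cyclic group of order $\geq 3$ acting on the "base-circle" generator around $\bx_i$. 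Since $M$ is simply connected, gluing back all $k$ tubes must kill $\pi_1(M_0)$ entirely. Using that $\Sigma$ is a $k$-holed sphere, so $\pi_1(\Sigma)$ is free on $k-1$ generators with the product of the $k$ boundary loops trivial, and tracking how these boundary loops sit inside $\pi_1(M_0)$ (the contribution of the $\gC_H$-fiber is central/abelian and controlled by Table~\ref{tab SOUOT}, while the $G/H$-part contributes a finite cyclic factor), one gets that $\pi_1(M)$ surjects onto a group built from $k-1$ "free" generators each of which survives the relations imposed by the tubes — because each tube only imposes a relation on its \emph{own} boundary loop, and the $k$ boundary loops are subject to just one relation among themselves in $\Sigma$. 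Concretely, if $k\geq 3$ the abelianization already forces a nontrivial quotient: $H_1(M_0)\cong \Z^{k-1}\oplus(\text{finite})$ and each tube $T_i$ kills at most the finite cyclic piece attached to the $i$-th boundary loop, leaving a free rank $\geq k-2 \geq 1$ summand, contradicting $\pi_1(M)=1$. Hence $k\leq 2$.

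I expect the main obstacle to be bookkeeping the van Kampen / Mayer--Vietoris data carefully enough: precisely identifying, inside $\pi_1\bigl(G\times_{K_i}\Sph^1\bigr)$, which element is killed by the inclusion into the tube versus which maps to the "base" generator in $\pi_1(M_0)$, and verifying that the relations from distinct tubes are genuinely "independent" (i.e.\ each only touches its own boundary loop and the globally-shared fiber/$G/H$ generators). The clean way to organize this is to first handle the simplest subcase $H=\SO(2)$, $G=\SU(2)$ — where $\gC_H=\Z_2$ and $G/H=\Sph^2$ is simply connected, so $\pi_1(M_0)$ is already a quotient of $\pi_1(\Sigma)=F_{k-1}$ — and then note that in all remaining admissible pairs $(H,K_i)$ from \pref{lem exceptional} the relevant $\pi_1$'s are even smaller or differ only by a finite abelian factor that the tubes cannot destroy in sufficient quantity. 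Once $k\leq 2$ is established the lemma is proved; the equality $k=2$ versus $k=1$ (and $k=0$, the unique-isotropy-type case) is then sorted out in the subsequent construction of $M$ as a union of two such neighborhoods.
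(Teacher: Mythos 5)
Your overall strategy (decompose $M$ into the tubes $T_i=G\times_{K_i}D^2$ and the principal part $M_0$, and show via van Kampen that $\pi_1(M)$ surjects onto something nontrivial once $k\geq 3$) is a hands-on version of what the paper does, but the decisive step as you wrote it is false. The paper itself avoids the computation entirely: it observes that $M/G\simeq\Sph^2$ with the exceptional orbits as cone points of order $\geq 3$ is a $2$-orbifold whose orbifold fundamental group is nontrivial whenever there are more than two singular points (Thurston), and that $\pi_1(M)$ surjects onto this orbifold fundamental group (Molino); simple connectedness of $M$ then gives $k\leq 2$ in two lines.

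The gap in your argument is the claim that ``each tube $T_i$ kills at most the finite cyclic piece attached to the $i$-th boundary loop, leaving a free rank $\geq k-2$ summand.'' The loop that becomes nullhomotopic in $T_i$ is the one covering the boundary circle $c_i\subset\Sigma$ exactly $n_i$ times (the circle $\Sph^1\subset D^2$ maps $n_i$-to-one onto $\Sph^1/\Z_{n_i}$), so the relation imposed on $H_1$ is $n_i c_i = f_i$ with $f_i$ a torsion class coming from the fiber. This makes the infinite-order generator $c_i$ torsion; after gluing all $k$ tubes the abelianization is $\Z^k/\langle \textstyle\sum e_i,\, n_1e_1,\dots,n_ke_k\rangle$ modulo fiber corrections, which is a \emph{finite} group, not one with a free summand of rank $k-2$. (For $k=3$ and $n_1=n_2=n_3=3$ it is $\Z_3\oplus\Z_3$.) So your contradiction with $\pi_1(M)=1$ evaporates as stated. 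The argument can be repaired --- the finite group above is indeed nontrivial whenever $k\geq 3$ and all $n_i\geq 3$, which is exactly the content of the triangle-group/orbifold fact the paper cites --- but you would then also have to control the fiber classes $f_i$ to be sure they do not trivialize the quotient, and at that point you are reproving the Thurston--Molino statement rather than bypassing it.
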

\begin{proof}
The exceptional orbits in the quotient $M/G \simeq \Sph^2$ (topologically) represent orbifold singularities, in fact, a neighborhood of an exceptional orbit is parametrized by $\R^2/\Z_m$ with $m \geq 3$. It is known that a 2-dimensional orbifold with underlying space $\Sph^2$ with more than two singularities has nontrivial orbifold fundamental group (c.f. Thurston \cite{thurston}, p. 304). On the other hand, in our situation, there is an onto map from $\pi_1(M)$ to the orbifold fundamental group of the quotient $M/G$ (c.f. Molino \cite{molino}, p. 273 and 274). Therefore, there are at most two exceptional orbits since~$M$ is simply-connected.
\end{proof}

%The following two theorems show that the condition of trivial fundamental group for $M$ restricts to two the maximum number of exceptional orbits of $G \hra M$.

%\begin{thm}\label{thm orbifold fund group} Let $G$ be a connected Lie group. If $G \hra M$ has no singular orbit. Then the induced map $\pi_*:\pi_1(M) \lra \pi_1^{\text{orb}}(M/G)$ is onto.\end{thm}

%\begin{thm}\label{thm two singularities}If a two-dimensional orbifold $B$ has more then 2 singularities, then this is a good orbifold. In particular, $\pi_1^{\text{orb}}(B) \neq 0$.\end{thm}

%{\huge Get a reference for the previous results}

%\begin{cor} If $M$ is a simply-connected $G$-manifold of cohomogeneity 2, then there are at most 2 exceptional orbits.\end{cor}

% In order to construct a simply-connected 5-dimensional $G$-manifold without singular orbits with orbit space homeomorphic to $\Sph^2$ and at most two exceptional orbits, we use the Slice Theorem to understand the neighborhoods of the exceptional orbits and the $G$-action in each one. Then we glue the two neighborhoods with an equivariant diffeomorphism along their boundaries.

 A neighborhood of an exceptional orbit $G/K$ is given by $A = G \times_{K} D^2$, where $D^2 \subset \C$ is the 2-disk and the linear slice action of $K$ on $D^2$ has kernel equal to the principal isotropy group $H \subset K$ of the $G$-action. The action of $G$ on $A$ is given by $g\cdot[(p,z)] = [(gp,z)]$. The slice action only fixes the origin in $D^2$, since the exceptional orbits are isolated.  The manifold $M$ with at most two exceptional orbits can be written as 
\begin{eqnarray}\label{eqn glue} M = A_1 \bigcup_{\varphi} A_2  \quad, \quad A_j = G\times_{K_j} D^2,
\end{eqnarray}
where $\varphi:  G \times_{K_1} \Sph^1  \ra  G \times_{K_2} \Sph^1$
is a $G$-equivariant diffeomorphism. Since $H$ is only acting on the first factor of the product $G \times D^2$, we can write $G \times_{K_j} D^2 = (G/H) \times_{K_j/H} D^2$. Recall that in all our cases $K_j/H$ is a cyclic group, say $\Z_{q_j}$. Since $\varphi$ is $G$-equivariant, it is a bundle map between the fiber bundles
\begin{equation}\label{eqn G/H-bundle} G/H \ra (G/H) \times_{K_j/H} \Sph^1 \ra \Sph^1/\Z_{q_j},
\end{equation}
 for $j=1,2$. Also here, $\varphi$ is completely determined by the image of the path $t \mapsto [(H,e^{2\pi it/q_1})]$ in  $\partial A_1$ for $0 \leq t \leq 1$. Notice that if we take $t \in [0,1]$  fixed, the map $\varphi$ becomes a $G$-equivariant diffeomorphism of $G/H$ on itself, so it is identified with an element  $\gk(t)$ of $N(H)/H$, where we can assume that $\gk(0)=H$. Therefore,
  \begin{equation}\label{eqn clutching 1} \varphi[(H, e^{2\pi it/q_1})]=\lb (\gk(t), e^{2\pi it/q_2})\rb. \end{equation}
  
Before applying this construction to the cyclic and noncyclic isotropy types we compute the fundamental groups of $A_j$ and $A_1 \cap A_2 \simeq \partial(A_1)$. Each component $A_j$ deformation retracts to~$G/K_j$, so the fundamental group of $A_j$ is isomorphic to $K_j$. 
Observe that \mbox{$\Z_{q_j} \ra G/H \times \Sph^1 \ra G/H \times_{\Z_{q_j}} \Sph^1$} is a principal bundle. So, considering the sequence of homotopies of this bundle and of the bundle in \eqref{eqn G/H-bundle} we see that $H$ and $q_j\Z$ are normal subgroups of the fundamental group of $A_1 \cap A_2$ and that $\pi_1(A_1 \cap A_2)\simeq H\rtimes\Z$. 

We claim that the elements of $H$ and $\Z$ in $\pi_1(A_1 \cap A_2)$ commute if $\gcd(q_1,q_2)=1$. 
In fact, in this case the whole subgroup $\Z$ is normal in the fundamental group, therefore, $\pi_1(A_1 \cap A_2) \simeq H \times \Z$, if $H=\Z_d$.

We show next that the isotropy groups are cyclic.

\begin{lem} \label{lem non-cyclic}The 5-dimensional compact simply-connected $G$-manifolds with exceptional orbits only have cyclic isotropy groups.
\end{lem}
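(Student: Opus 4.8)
\noindent\emph{Proof plan.}
Suppose, toward a contradiction, that some exceptional isotropy group of $M$ is not cyclic. By \pref{lem exceptional} the pair $(H,K)$ of principal and exceptional isotropy types is then $(\D_2,\T)$ (if $G=\SO(3)$) or $(\Dic_2,\T^*)$ (if $G=\SU(2)$); since $K$ is determined by $H$, every exceptional orbit has type $(K)$, and by \lref{lem number except orbits} there are one or two of them. I would then write $M=A_1\cup_\varphi A_2$ as in \eqref{eqn glue}, with $A_j=G\times_{K_j}D^2$ and $q_j=|K_j/H|$, where at least one $q_j$ equals $3$; note that $q_j=1$ forces $K_j=H$, so that $A_j=(G/H)\times D^2$. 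The goal is to extract a nonzero $H_1(M)$, or an outright impossibility in the gluing, from the two building blocks.

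The input is purely group–theoretic. In both cases $\pi_1(G/H)\cong\Dic_2$, with abelianization $\Z_2\oplus\Z_2$, and $\pi_1(G/K)\cong\T^*$, with $(\T^*)^{\mathrm{ab}}\cong\Z_3$. A generator of $K/H\cong\Z_3$, acting on $G/H$ by right translation by a coset representative $\kappa$, induces on $\pi_1(G/H)\cong\Dic_2=\{\pm1,\pm i,\pm j,\pm k\}$ the conjugation by $\kappa$, which cyclically permutes $\langle i\rangle,\langle j\rangle,\langle k\rangle$; in particular the automorphism it induces on $\Dic_2^{\mathrm{ab}}\cong\Z_2\oplus\Z_2$ has order $3$, so $1-\phi$ is invertible there. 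Since $\partial A_j=G\times_{K_j}\Sph^1=(G/H)\times_{\Z_{q_j}}\Sph^1$ is a $(G/H)$-bundle over $\Sph^1/\Z_{q_j}\cong\Sph^1$ whose monodromy $\phi_j$ is exactly this conjugation (the identity if $q_j=1$, the order-three automorphism if $q_j=3$), one gets $\pi_1(\partial A_j)\cong\Dic_2\rtimes_{\phi_j}\Z$, and hence
$$H_1(\partial A_j)\;\cong\;\begin{cases}\Z & \text{if }q_j=3,\\ \Z_2\oplus\Z_2\oplus\Z & \text{if }q_j=1,\end{cases}$$
the first line because, $1-\phi$ being an isomorphism of $\Dic_2^{\mathrm{ab}}$, the abelianization of $\Dic_2\rtimes_\phi\Z$ is just $\Z$.

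With this the two subcases close quickly. If $M$ has a single exceptional orbit, then $\{q_1,q_2\}=\{3,1\}$; but $\varphi$ is a diffeomorphism $\partial A_1\to\partial A_2$, which is impossible since $H_1(\partial A_1)\cong\Z$ while $H_1(\partial A_2)\cong\Z_2\oplus\Z_2\oplus\Z$. If $M$ has two exceptional orbits, then $K_1=K_2=K$ and $q_1=q_2=3$; each inclusion $\partial A_j\hookrightarrow A_j\simeq G/K_j$ is homotopic to the projection of the $\Sph^1$-bundle $G\times_{K_j}\Sph^1\to G/K_j$, hence surjective on $\pi_1$ and therefore on $H_1$, so in the Mayer--Vietoris sequence of $M=A_1\cup A_2$ the homomorphism $H_1(A_1\cap A_2)=\Z\to H_1(A_1)\oplus H_1(A_2)=\Z_3\oplus\Z_3$ has both components onto. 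Its image is then cyclic of order $3$, and $H_1(M)$, being the cokernel, is isomorphic to $\Z_3\neq0$, contradicting $\pi_1(M)=1$. In either case the assumption is untenable, so all isotropy groups of $M$ are cyclic.

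The one genuinely delicate point is the middle paragraph: correctly identifying the monodromy of $\partial A_j\to\Sph^1/\Z_{q_j}$ with the conjugation action of $K/H$ on $\pi_1(G/H)$, and checking that this $\Z_3$-action on $\Dic_2^{\mathrm{ab}}$ has no nonzero fixed vector (equivalently, is not inner). Everything else is routine van Kampen / Mayer--Vietoris bookkeeping, using the descriptions of $\pi_1(A_j)$ and $\pi_1(A_1\cap A_2)$ already established before the statement of the lemma.
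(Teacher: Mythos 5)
Your argument is correct, and it uses the same decomposition as the paper --- write $M=A_1\cup_\varphi A_2$ from the Slice Theorem and derive a contradiction with simple connectivity --- but it extracts the contradiction by a genuinely different computation. The paper works with explicit presentations: it realizes $\T^*\simeq\Dic_2\rtimes\Z_3$, writes down the two inclusion-induced maps $\Dic_2\rtimes\Z\to\T^*$, and asserts that the resulting amalgamated product $(\T^* *\, \T^*)/\T^*$ is nontrivial. You abelianize first and run Mayer--Vietoris, which reduces everything to the fact that the order-three automorphism $\phi$ of $\Dic_2^{\mathrm{ab}}\cong\Z_2\oplus\Z_2$ has $1-\phi$ invertible (true: over $\Z_2$ one has $(1+\phi)\phi=\phi+\phi^2=1$), and which yields the sharper and immediately verifiable conclusion $H_1(M)\cong\Z_3$. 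This buys two things. First, the nontriviality that the paper leaves as an assertion about an amalgamated free product becomes transparent. Second, your version covers the subcase of a single exceptional orbit with noncyclic isotropy, where one cap is $(G/H)\times D^2$; the paper's proof implicitly assumes both caps are of the form $G/H\times_{\Z_3}D^2$, whereas your obstruction $H_1(\partial A_1)\cong\Z$ versus $H_1(\partial A_2)\cong\Z_2\oplus\Z_2\oplus\Z$ disposes of that case cleanly. The point you flag as delicate --- identifying the monodromy of $\partial A_j\to\Sph^1$ with conjugation by a representative in $N(H)$ of a generator of $K/H$, and checking that it cyclically permutes $\langle i\rangle,\langle j\rangle,\langle k\rangle$ so that the induced action on $\Dic_2^{\mathrm{ab}}$ has no nonzero fixed vector --- is indeed the crux, and your verification of it is correct.
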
 
\begin{proof}
By \pref{lem exceptional}, the pair $(H,K)$ is either $(\D_2, \T)$, $(\Dic_2, \T^*)$ or both are cyclic.
We have
\mbox{$M=G/H \times_{\Z_3}D^2 \cup_{\varphi}G/H \times_{\Z_3}D^2$}
 where $\Z_3$ is the quotient $K/H$. We assume $G=\SU(2)$ since $\SO(3)/\D_2 = \SU(2)/\Dic_2$. 
Note that there are exactly two non-equivalent $\Z_3$-actions on $G/H \times D^2$, namely $\xi\cdot(pH,z) = (p\ovl{\xi} H,\xi^{c_j}z)$ for $c_j=1$ or 2 and $\xi = e^{2\pi i/3}$. Recall that $A_j = G/H \times_{(K_j/N)} D^2$. Moreover, the clutching function $\varphi$ is trivial since the normalizer $N(H)/H$ is discrete (see Table \ref{tab SOUOT}).

It is known that $\T^* \simeq \Dic_2 \rtimes \Z_3$ where the $\Z_3$ is generated by $w= -1/2(1+i+j+k)$ and the $\Z_3$-action on $\Dic_2$ is the automorphism that cyclically rotates $i$, $j$ and $k$ (see \cite{coxeter} p. 76). 
The isomorphism between $\Dic_2 \rtimes \Z_3$ and $\T^*$ takes $(x,w)$ to $xw \in \T^*$. So, the action of $\T^*$ on $\SU(2) \times D^2$, which has quotient $G/H \times_{\Z_3} D^2$ is defined by $(x,w)\cdot (p,z) = (pw^{-1}x^{-1},w^{c_j} z)$. 

 Also, the action of $\Dic_2 \rtimes \Z$ on $\SU(2) \times \R$, which has quotient $G/H \times_{\Z_3} \Sph^1$ is given by $(x,a)\cdot (g,s) = (gw^{-a}x^{-1}, s + 2\pi {c_j} a/3)$. Since $\varphi$ is trivial, the induced maps $i_j*: \Dic_2 \rtimes \Z \ra \T^*$ take $(x,a)$ to $(x,w^{ac_j})$ and it is clear that $\pi_1(M) \simeq (\T^**\T^*)/\T^*$ is nontrivial when the quotient is provided by the amalgamation property $i_1(x,a) = i_2(x,a)$ for all $(x,a) \in \Dic_2 \rtimes \Z$ and any choice of $c_1$ and~$c_2$.
\end{proof}
%The 2-orbifolds $\Sigma$ which have underlying topological space a 2-sphere and exactly one singular point are not regularizable, meaning that they are not isomorphic to any orbifold $\Sph^2/\Gamma$, where the group $\Gamma$ acts properly and discontinously. (see Daverman \cite{} page 115).

By Lemmas~\ref{lem number except orbits}  and~\ref{lem non-cyclic}, we only need to consider $\SU(2)$-actions with isotropy types $(\Z_{n_1})$, $(\Z_{n_2})$ and  $(\Z_d)$ where $d \mid  \gcd(n_1,n_2)$. To avoid ambiguity assume $n_1 \leq n_2$.  The \mbox{$\SO(3)$-ma}nifolds will be just the $\SU(2)$-actions with ineffective kernel $\Z_2$.

% The following lemma determines in how many ways we can glue such neighborhoods. In other words, it says how many actions with one or two exceptional orbits exist without the hypothesis of $\pi_1(M)=\{0\}$.

%In this section we will compute the fundamental group of $M = \Sph^3 \times_{\Z_{n_1}} D^2 \bigcup_{\varphi} \Sph^3 \times_{\Z_{n_2}} D^2$ using van Kampen's Theorem. 
Consider $\Z_{n_1}$ and $\Z_{n_2}$ as subgroups of the same circle parametrized by $t \mapsto e^{2\pi it} \in \SU(2)$, using quaternion notation.  
Let $n_j=dq_j$ and $\xi_j = e^{2\pi i/d q_j}$ be a generator of $\Z_{n_j}$. For the isotropy action in a neighborhood of an exceptional orbit it is easy to see the following.
\begin{lem} The $\Z_{n_j}$-actions on $\SU(2) \times D^2$ are given by
$$\xi_j \cdot (p,z) = (p \ovl{\xi_j}, \xi_j^{da_j} z), $$
for some $a_j$ with $\gcd(a_j, q_j) = 1$ and $0 \leq a_j <q_j$  for $j=1,2$.
\end{lem}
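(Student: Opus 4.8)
The plan is to fix $j$ and unwind what \pref{lem exceptional} has already established about the slice representation $\rho\colon K_j\to\O(2)$, where $K_j=\Z_{n_j}$ with $n_j=dq_j$ and principal isotropy group $H=\Z_d\subset K_j$. That proposition tells us $H=\ker\rho$ and that the exceptional orbit is isolated, i.e.\ $\rho(K_j)$ fixes only the origin of the slice $D^2\subset\C$. So the first step is to argue $\rho(K_j)\subset\SO(2)$: since $K_j$ is cyclic, $\rho(K_j)$ is a cyclic subgroup of $\O(2)$, and a cyclic subgroup of $\O(2)$ containing an orientation-reversing element is generated by a reflection, hence fixes a line---contradicting isolatedness. (Equivalently, a reflection in $\rho(K_j)$ would force $M/G$ to have boundary near this orbit, against \lref{lem ex qt}.) Hence $\rho(\xi_j)$ is a genuine rotation.

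Next I would identify that rotation. The map $\rho$ factors through $K_j/H\simeq\Z_{q_j}$ and is injective there, and the image of $\xi_j$ in $K_j/H$ has order $n_j/d=q_j$; so $\rho(\xi_j)$ has order exactly $q_j$ in $\SO(2)$, i.e.\ it is rotation by $2\pi a_j/q_j$ for some integer $a_j$ with $\gcd(a_j,q_j)=1$, and after replacing $a_j$ by its residue we may take $0\le a_j<q_j$. Writing this rotation as complex multiplication and recalling that $\xi_j$ is by definition the complex number $e^{2\pi i/(dq_j)}$, we get $\rho(\xi_j)z=e^{2\pi i a_j/q_j}z=\xi_j^{da_j}z$. (If $q_j=1$, the degenerate case with no honest exceptional orbit, the slice action is trivial and $a_j=0$ works.)

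Finally I would transfer this to the twisted product. By definition $A_j=\SU(2)\times_{K_j}D^2=(\SU(2)\times D^2)/K_j$, where the $K_j$-action we quotient by is $k\cdot(p,z)=(pk^{-1},\rho(k)z)$; since for a unit complex number $\xi_j$ regarded inside $\SU(2)=\Sph^3\subset\QH$ its inverse is the quaternionic conjugate $\ovl{\xi_j}$, the generator acts by $\xi_j\cdot(p,z)=(p\ovl{\xi_j},\xi_j^{da_j}z)$, which is exactly the asserted formula.

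There is no real obstacle here---the statement is essentially bookkeeping---and the only point requiring care is the very first one, ruling out orientation-reversing elements of $\rho(K_j)$, which comes for free from the isolatedness of exceptional orbits in \pref{lem exceptional} (or from $M/G\simeq\Sph^2$ having empty boundary, \lref{lem ex qt}). Everything after that is the standard classification of cyclic orthogonal actions on the plane together with the definition of $G\times_K D^2$.
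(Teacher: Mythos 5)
Your proof is correct, and it matches the paper's intent: the paper states this lemma without proof (``it is easy to see''), having already established in \pref{lem exceptional} that the slice representation has kernel $H$ and image a cyclic rotation group $\Z_{q_j}\subset\SO(2)$, which is exactly what you unwind. Your bookkeeping --- identifying the generator's image as rotation by $2\pi a_j/q_j=\xi_j^{da_j}$ with $\gcd(a_j,q_j)=1$ and translating to the twisted product via $\xi_j^{-1}=\ovl{\xi_j}$ --- is the intended argument and is complete.
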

% \begin{proof}  The action in the first coordinate is just right multiplication by the inverse since the action in the product is provided by the Slice Theorem. So it depends on the choice of the $\SO(2)$ inside $\SU(2)$ that contains $\Z_{n_j}$. Since in $\SU(2)$ any two circles are conjugated,  we choose the circle $t \mapsto e^{2\pi it}$. The lemma follows from the fact that on the second coordinate the action is the slice representation, so it is a linear $\Z_{n_j}$-action on $D^2$ with kernel $\Z_d$. 
% \end{proof}

\begin{rem} \label{rmk slice} The isotropy representation at a point with isotropy $\Z_{n_j}$ is determined by the number $a_j$. It is well known that two different representations, $\rho$ and $\rho': \Z_m \ra \O(2)$, are equivalent if and only if $\rho' = \ovl{\rho} $. Notice that they rotate the 2-plane in opposite directions, so $A_j$ is unchanged if we  consider $q_j - a_j$ instead of $a_j$ in the isotropy representation. However, the orientation of the slice in $A_j$ is reversed under this change. 
\end{rem}

A simply-connected $\SU(2)$-manifold $M$ with exceptional orbits, or with only one cyclic isotropy type, is determined by the isotropies,  the  parameters $a_1$ and $a_2$,  and the clutching function $\varphi$. The elements $a_j \in \Z_{q_j}$ have inverse, say $b_j = a_j^{-1}$,  so we write \mbox{$M = M(b_1,b_2,\varphi)$}.

For $d \geq 3$ if we consider an orientation on the manifold and on $G$, it naturally defines an orientation on $G/\Z_d$ and on the slices through the exceptional orbits. Therefore, by \rref{rmk slice}, the $G$-manifolds $M(b_1,b_2,\varphi)$ and $M(q_1 - b_1,b_2, \varphi)$ cannot be equivariantly diffeomorphic, although they have equivalent slice  representations.

The proof of the next result is inspired by Theorem 5.1 of \cite{Bredon}.
\begin{prop}\label{prop k} The $\SU(2)$-manifolds $M(b_1,b_2,\varphi_o)$ and $M(b_1,b_2,\varphi_1)$ are equivalent if and only if the clutching functions $\varphi_o$ and $\varphi_1$ are homotopic.
\end{prop}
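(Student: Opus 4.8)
The plan is to establish the two implications separately: the ``if'' direction by a soft collar construction, and the ``only if'' direction by normalising a $G$-equivariant diffeomorphism so that it respects the decomposition \eqref{eqn glue} and then reading off its effect on the clutching functions. Here ``homotopic'' for the clutching functions means equivariantly homotopic as bundle maps of the bundle \eqref{eqn G/H-bundle}; since the base $\Sph^1/\Z_{q_j}$ is one-dimensional this is the same as being equivariantly isotopic through $G$-equivariant diffeomorphisms, and by \eqref{eqn clutching 1} it is detected by the class in $\pi_1(N(H)/H)$ of the loop $\gk$.

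For the ``if'' direction, suppose $\varphi_o$ and $\varphi_1$ are homotopic and choose a $G$-equivariant isotopy $h_t$ of $G$-equivariant diffeomorphisms of $G\times_{K_2}\Sph^1$ with $h_0=\id$ and $h_1=\varphi_1\circ\varphi_o^{-1}$. Using a collar $G\times_{K_2}\bigl(\Sph^1\times[\tfrac12,1]\bigr)$ of $\partial A_2$ in $A_2=G\times_{K_2}D^2$ and a smooth function $[\tfrac12,1]\to[0,1]$ that vanishes near $\tfrac12$ and equals $1$ at $1$, spread the isotopy $(h_t)$ over the collar to obtain a $G$-equivariant diffeomorphism $\Psi$ of $A_2$ that restricts to $\varphi_1\circ\varphi_o^{-1}$ on $\partial A_2$ and to the identity on the inner disk bundle. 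Then $\id_{A_1}\sqcup\Psi$ carries the identification $x\sim\varphi_o(x)$ on $\partial A_1$ to the identification $x\sim\varphi_1(x)$, hence descends to a $G$-equivariant diffeomorphism $M(b_1,b_2,\varphi_o)\to M(b_1,b_2,\varphi_1)$.

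For the ``only if'' direction, let $F\colon M(b_1,b_2,\varphi_o)\to M(b_1,b_2,\varphi_1)$ be a $G$-equivariant diffeomorphism. By \pref{lem exceptional} the exceptional orbits are isolated and are the only orbits of their isotropy types, so $F$ matches them up; after possibly interchanging $A_1$ and $A_2$ (which can only occur when $n_1=n_2$ and then leaves the pair $(b_1,b_2)$ unchanged) $F$ sends the exceptional orbit inside $A_j$ to the one inside the $j$-th piece of the target. By uniqueness of equivariant tubular neighbourhoods and the equivariant isotopy extension theorem (see \cite{Bredon}), $F$ is $G$-equivariantly isotopic to a diffeomorphism, still written $F$, mapping $A_1$ onto $A_1$ and $A_2$ onto $A_2$ as disk-bundle maps. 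Setting $F_j=F|_{\partial A_j}$, the compatibility $F(\varphi_o(x))=F(x)$ for $x\in\partial A_1$ (where $x\sim\varphi_o(x)$ in the source while $F_1(x)\sim\varphi_1(F_1(x))$ in the target) gives $\varphi_1=F_2\circ\varphi_o\circ F_1^{-1}$. Thus it is enough to prove that each $F_j$ is equivariantly isotopic to the identity through $G$-equivariant diffeomorphisms of $\partial A_j$, for then $\varphi_1\simeq\varphi_o$.

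This last point is the crux, and is the equivariant counterpart of the clutching argument in Theorem~5.1 of \cite{Bredon}. Since $F_j$ extends to the equivariant disk-bundle automorphism $F|_{A_j}$ of $A_j=G/H\times_{\Z_{q_j}}D^2$, a further application of tubular-neighbourhood uniqueness shows $F|_{A_j}$ is equivariantly isotopic to a bundle isomorphism; hence up to equivariant isotopy it is determined by an equivariant self-diffeomorphism of the zero section $G/K_j$, i.e.\ an element of $N(K_j)/K_j$, together with a gauge transformation of the normal $\C$-bundle covering the identity. The latter are determined fibrewise over the single orbit $G/K_j$ by a $\Z_{n_j}$-equivariant diffeomorphism of the disk fixing $0$; for $n_j\geq 3$ the group of such deformation retracts onto $\SO(2)$ and is therefore connected, so $\pi_0\Diff_G(A_j)\cong\pi_0\bigl(N(K_j)/K_j\bigr)=\pi_0\bigl(\Pin(2)/\Z_{n_j}\bigr)\cong\Z_2$ (cf.\ Table~\ref{tab SOUOT}), the nontrivial class being represented by the quaternion $j$, which conjugates $\Sph^1$ and hence reverses the orientation of the slice recorded by $b_j$. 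Because $F$ is an equivalence of $G$-manifolds carrying the fixed datum $b_j$ to itself together with that orientation (cf.\ \rref{rmk slice}), $F|_{A_j}$ lies in the identity component of $\Diff_G(A_j)$; restricting an equivariant isotopy to $F|_{A_j}$ to $\partial A_j$ shows $F_j\simeq\id$, as needed. I expect this paragraph to be the main obstacle: it rests on controlling $\pi_0$ of the equivariant diffeomorphism groups of the model pieces $A_j$ (ultimately using that $\SU(2)=\Sph^3$ is $2$-connected, so the relevant gauge groups are connected), and on a careful treatment of the slice-orientation bookkeeping forced by fixing $b_1,b_2$; the possible interchange of the two exceptional orbits, harmless only when $n_1=n_2$, must also be dealt with.
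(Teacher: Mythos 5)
Your ``if'' direction is correct: spreading the equivariant isotopy from $\id$ to $\varphi_1\circ\varphi_o^{-1}$ over a collar of $\partial A_2$ is a sound (and more hands-on) substitute for the paper's argument, which instead forms the cylinder $N=A_1\times I\cup_F A_2\times I$ over $\Sph^2\times I$ and invokes the Covering Homotopy Theorem. The ``only if'' direction, however, has a genuine gap at exactly the point you flag. After reducing to $\varphi_1=F_2\circ\varphi_o\circ F_1^{-1}$, everything rests on showing that each $F_j$ is equivariantly isotopic to the identity, and your justification --- that $F$ ``carries the fixed datum $b_j$ to itself together with that orientation'' and therefore lies in the identity component of $\Diff_G(A_j)$ --- is circular: nothing about an abstract equivariant diffeomorphism forces it to preserve the slice orientations. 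When $q_j\geq 3$ the datum $b_j$ does detect the orientation (since then $b_j\not\equiv q_j-b_j \bmod q_j$), but when $q_j\leq 2$ it does not, and slice-orientation-reversing equivalences genuinely exist and change the homotopy class of the clutching function: this is precisely the identification $M(b_1,b_2,k)=M(q_1-b_1,q_2-b_2,-k-2)$ of \tref{lem equivariance}, realized for instance by the equivalence $\fN_{d,d}^{l}\cong\fN_{d,d}^{-l}$ of \rref{rem sign of l}. So the step ``$F_j\simeq\id$'' cannot be established in the generality you attempt; it needs the orientation conventions fixed in the paragraph preceding the proposition. (A minor point in the same paragraph: connectivity of the group of equivariant disk diffeomorphisms requires $q_j\geq 3$, not $n_j\geq 3$, since the effective slice rotation has order $q_j$.)

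The paper's proof avoids the computation of $\pi_0\Diff_G(A_j)$ altogether. It normalizes $f|_{A_1}=\id$ (the orientation subtlety is absorbed into this normalization), so that compatibility with the gluings forces $f|_{\partial A_2}=\varphi_1\circ\varphi_o^{-1}$, which in coordinates is $\lb(\Z_d,\xi_2(t)^d)\rb\mapsto\lb(\gk_o(t)^{-1}\gk_1(t),\xi_2(t)^d)\rb$. The key observation is then simply that this boundary map extends over the disk bundle $A_2$ (namely by $f|_{A_2}$): restricting the extension to the concentric circles $s\,\xi_2(t)^d$ yields paths $h_s(t)\in N(\Z_d)/\Z_d$ with $h_1=\gk_o^{-1}\gk_1$ and $h_0$ constant, i.e.\ an explicit null-homotopy of the loop $\gk_o^{-1}\gk_1$, whence $\varphi_o\simeq\varphi_1$. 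If you replace your last paragraph by this extension argument (applied after arranging $F|_{A_1}=\id$ rather than merely $F(A_j)=A_j$), the proof closes.
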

\begin{proof}
Call $M_o = M(b_1,b_2, \varphi_o)$ and $M_1 = M(b_1,b_2, \varphi_1)$. Let $H$ be a homotopy between $\varphi_o$ and~$\varphi_1$. Define $F:\partial A_1 \times I \ra \partial A_2 \times I$ by $F(p,t) = (H(p,t),t)$ and the $G$-manifold 
$$N = A_1 \times I \, \bigcup_F \, A_2 \times I,$$
with trivial action on the intervals. This makes $N$ a cylinder with $M_o$ on the bottom and~$M_1$ on the top. Observe that $N/G$ is homeomorphic to $\Sph^2 \times I$ and that if $\pi$ is the projection of $N$ in $\Sph^2 \times I$, then $\pi^{-1}(\Sph^2 \times \{0\}) = M_o$, thus the Covering Homotopy Theorem (see, e.g.~\cite{Bredon}~p.~93) asserts that $N$ is equivalent to $M_o \times I$ (the product of $G$-manifolds with trivial action on the interval) and, therefore, $M_o$ and $M_1$ are equivalent.

Conversely, let $f:M_o \ra M_1$ be a $G$-equivariant diffeomorphism.
 Assume that the manifolds are both written as above using the Slice Theorem and that $f$ restricted to $A_1$ is the identity map. 
Considering $\xi_j(t) = e^{2\pi it/n_j}$,  the clutching functions are
$$\varphi_{i}[(\Z_d, \xi_1(t)^d)] = \lb (\gk_i(t), \xi_2(t)^d) \rb,$$
for $i=0$ or 1, as in (\ref{eqn clutching 1}). So, $f\mid_{\SU(2)\times_{\Z_{n_2}} \Sph^1}$ is an $\SU(2)$-equivariant diffeomorphism of $\SU(2)\times_{\Z_{n_2}} \Sph^1$ given by 
\begin{equation}\label{eqn homotopy}\lb ( \Z_d, \xi_2(t)^d) \rb \mapsto \lb (\gk_o(t)^{-1}\gk_1(t), \xi_2(t)^d) \rb,\end{equation}

 that extends equivariantly to $\SU(2) \times_{\Z_{n_2}} D^2$.  Since the slice representation is a~parametrization of the orbit space this  extension must take the slice $\lb (\Z_d, s\xi_2(t)^d) \rb$ to $\lb ( h_s(t), s\xi_2(t)^d) \rb$, for some $h_s(t) \in \SU(2)/\Z_d$ where $s \in [0,1]$,  $h_1(t) = \gk_o(t)^{-1}\gk_1(t)$ and observe that $h_o(t)$ does not depend on~$t$ since \mbox{$\lb (h_o(t), 0) \rb = f(\lb (\Z_d, 0) \rb)$}. So, the path $\gk_o^{-1}\gk_1$ on $N(\Z_d)/\Z_d$ is homotopically trivial, and therefore the clutching functions $\varphi_o$ and $\varphi_1$ are homotopic.
\end{proof}

For $d=1$  the clutching function $\varphi$ has only one homotopy class, since $N(H) = \SU(2)$, so $M$ can be represented by $M(b_1,b_2)$. We have seen in Remark \ref{rmk slice} that the $\SU(2)$-manifolds $M(b_1,b_2)$ and $M(b_1',b_2')$ are equivalent if and only if $b_j' = b_j$ or $b_j' = n_j - b_j$ for both $j=1$ and~2 simultaneously. 
%It shows that there are $\Phi(n_1)\Phi(n_2)/2$ nonequivalent $\SU(2)$-actions with $d = 1$, where $\Phi(m)$ is the number of positive integers coprime with $m$ that are smaller than $m$. 
The manifold  $M(b_1,b_2)$ is equivalent to $\fN_{n_1,n_2}^l$ when $l = b_1n_2 + b_2n_1$ since the isotropy representations coincide as one can see from \pref{prop invariant}. Thus \tref{thm exceptional SU(2)} is proved if $\gcd(n_1,n_2)=1$.

For $d\geq 2$, we analyze the clutching function $\varphi$ in more detail. The path $\gk$ defined in (\ref{eqn clutching 1}) must satisfy $\gk(1)=\xi_1^{b_1}\xi_2^{b_2}\gk(0)$, since $\varphi$ is $\SU(2)$-equivariant. This follows from 
 \begin{equation}\label{eqn n(t)} \lb (\xi_1^{b_1}\gk(0),1)\rb = \varphi[(\xi_1^{b_1}\Z_d, 1)] =  \varphi[(\Z_d,\xi_1^d)] = \lb(\gk(1),\ovl{\xi_2}^d)\rb = \lb(\ovl{\xi_2}^{b_2}\gk(1),1)\rb.
 \end{equation}
Recall our notation $\xi_j(t) = e^{2\pi i t/n_j}$. By  \pref{prop k} we can assume that the path $\gk$ is given by \mbox{$\gk(t)=\xi_1(t)^{b_1}\xi_2(t)^{b_2 + kq_2}\Z_d$} with $k \in \Z$. Therefore
$$\varphi[(\ovl{\xi_1(t)}^{b_1}\Z_d\, ,\, \xi_1(t)^{d})] = \lb(\xi_2(t)^{b_2}e^{2\pi i k/d}\Z_d\, , \, \ovl{\xi_2(t)}^{d})\rb,$$
Thus the homotopy class of $\varphi$ is precisely represented by $k$.

 Notice that for $\mu(t) = e^{2\pi it/dq_1q_2} \in \SU(2)$ and $l = b_1q_2 + b_2q_1 + kq_1q_2$ the clutching function has the form
\begin{equation}\label{eqn clutching}\varphi[(\Z_d\, ,\, \mu(t)^{n_2})] = \lb(\mu(t)^{l}\Z_d\, , \, \ovl{\mu(t)}^{n_1})\rb,
\end{equation}
which is the same expression as in \pref{prop invariant} by changing $l$ by $-l$. The sign does not matter for our purposes since $\fN_{n_1,n_2}^l = \fN_{n_1,n_2}^{-l}$ as observed in \rref{rem sign of l}.

The map $\varphi$ has two homotopy classes if $d=2$ and depends on the number $k\in \Z$ if $d\geq 3$. Thus~$M$ can be represented  by $M(b_1,b_2, \epsilon)$ or $M(b_1,b_2, k)$ respectively, for $\epsilon \in \{0,1\}$ and~$k \in \Z$.

\begin{prop} \label{thm fund gp of M} The fundamental group of $M(b_1,b_2,k)$ is a cyclic group of order $\gcd(n_1,n_2,l)$.
\end{prop}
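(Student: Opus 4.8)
The plan is to compute $\pi_1(M(b_1,b_2,k))$ via the Seifert--van Kampen theorem applied to the decomposition $M = A_1 \cup_\varphi A_2$ from \eqref{eqn glue}, using the presentations of the groups $\pi_1(A_j)$ and $\pi_1(A_1 \cap A_2)$ already established in the excerpt. Recall that $A_j$ deformation retracts onto $G/K_j = G/\Z_{n_j}$, so $\pi_1(A_j) \simeq \Z_{n_j}$; that $A_1 \cap A_2 \simeq \partial A_1$ has $\pi_1 \simeq \Z_d \times \Z$ (the commutativity holding here because $N(\Z_d)/\Z_d$ is abelian, so the $\Z$ factor is central, as noted just before \lref{lem non-cyclic}); and that the inclusion-induced maps send the $\Z_d$ factor isomorphically onto the order-$d$ subgroup of $\Z_{n_j}$. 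The whole calculation therefore reduces to tracking where the generator of the $\Z$ factor of $\pi_1(A_1\cap A_2)$ goes under each inclusion, and this is exactly the information encoded in the clutching data $(b_1,b_2,k)$.

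First I would fix generators: let $\zeta$ be a generator of $\Z_{n_j}$ coming from $\xi_j(t) = e^{2\pi i t/n_j}$, and in $\pi_1(A_1\cap A_2) \simeq \Z_d \times \Z$ let $\sigma$ generate the $\Z_d$ factor (a loop in the $G/\Z_d$ fibre of \eqref{eqn G/H-bundle}) and $\tau$ generate the $\Z$ factor (the loop $t \mapsto [(\Z_d, e^{2\pi i t/q_j})]$ in the base circle, lifted). Under the inclusion $A_1\cap A_2 \hookrightarrow A_1$, the base loop $\tau$ wraps $q_1$ times around, but in $G/H\times_{\Z_{q_1}}\Sph^1$ going once around the base corresponds, on passing to $G/K_1$, to the isotropy rotation recorded by $a_1$ (with $b_1 = a_1^{-1}$ in $\Z_{q_1}$); a short chase of the homotopy sequences of the bundles $\Z_{q_j}\to G/H\times\Sph^1 \to G/H\times_{\Z_{q_j}}\Sph^1$ and \eqref{eqn G/H-bundle} shows $\tau \mapsto \zeta^{\,b_j n_j / q_j} = \zeta^{\,b_j d}$ roughly speaking — more precisely, combining the base-circle contribution with the $k$-twist from the clutching function \eqref{eqn clutching}, the element $\tau$ maps to $\zeta_1^{\,b_1 d}$ on the $A_1$ side and to $\zeta_2^{\,(b_2+kq_2)d}$ on the $A_2$ side after identifying things through $\varphi$. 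Then van Kampen gives
$$
\pi_1(M) \;\simeq\; \bigl(\Z_{n_1} * \Z_{n_2}\bigr)\big/ \bigl\langle\!\bigl\langle\, j_1(x) = j_2(x) \ \text{for } x \in \pi_1(A_1\cap A_2)\,\bigr\rangle\!\bigr\rangle,
$$
and since $\Z_d$ maps compatibly into both factors, the only new relation is $\zeta_1^{\,b_1 d} = \zeta_2^{\,(b_2+kq_2)d}$. Abelianizing is harmless because all groups in sight are abelian once $\gcd$-conditions are imposed, so $\pi_1(M)$ is the abelian group $\Z_{n_1}\oplus\Z_{n_2}$ modulo the relation $(b_1 d, -(b_2+kq_2)d)$, together with the amalgamation along $\Z_d$. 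A direct Smith-normal-form computation of the resulting $2\times 2$ (or $3\times 2$) integer presentation matrix, using $n_j = dq_j$, $\gcd(b_j,q_j)=1$, and $l = b_1 q_2 + b_2 q_1 + kq_1 q_2$, yields a cyclic group of order $\gcd(n_1, n_2, l)$; indeed $\gcd(n_1,n_2,l) = \gcd(dq_1, dq_2, b_1q_2 + b_2q_1 + kq_1q_2)$ and one checks this equals the order produced by the relation matrix.

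The main obstacle I expect is not the van Kampen bookkeeping per se but getting the exponents in the inclusion maps $j_1, j_2$ exactly right — i.e.\ correctly disentangling three superimposed contributions to how the generator $\tau$ of the $\Z$-factor sits inside $\Z_{n_1}$ and inside $\Z_{n_2}$: (i) the $q_j$-fold wrapping of the base circle $\Sph^1/\Z_{q_j}$, (ii) the isotropy twist $a_j$ (equivalently its inverse $b_j$) coming from the slice representation $\xi_j\cdot(p,z) = (p\bar\xi_j, \xi_j^{da_j}z)$, and (iii) the $k$-fold twist introduced by the clutching function as in \eqref{eqn clutching}. Once the formula $\tau \mapsto \zeta_1^{b_1 d}$ versus $\tau \mapsto \zeta_2^{(b_2 + kq_2)d}$ is pinned down — consistency can be double-checked against the already-known special cases, e.g.\ $d=1$ forcing $\pi_1 = 0$ since then $\gcd(n_1,n_2,l) \mid \gcd(n_1,n_2) = 1$, and the free actions $\fN_{1,1}^l$, $\fN_{2,2}^1$ — the remaining number theory ($\gcd$ identities via row/column operations) is routine. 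Alternatively, and perhaps more cleanly, one can read $\pi_1(M)$ directly off the original principal-bundle description: $\fN_{n_1,n_2}^l$ (equivalently $M(b_1,b_2,k)$) is $(\SU(2)\times\Sph^3)/\Sph^1$ and one can instead use the homotopy exact sequence of this $\Sph^1$-bundle restricted to a fibre over the relevant isotropy stratum, but since the statement is phrased in terms of $M(b_1,b_2,k)$ I would carry out the van Kampen argument as above.
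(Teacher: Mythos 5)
Your overall strategy is the same as the paper's: decompose $M=A_1\cup_\varphi A_2$ as in \eqref{eqn glue}, compute $\pi_1(A_j)\simeq\Z_{n_j}$ and $\pi_1(A_1\cap A_2)\simeq\Z_d\times\Z$, and apply van Kampen, with the whole content of the proof residing in the images of the generator $\tau$ of the $\Z$--factor under the two inclusion-induced maps. The problem is that the exponents you commit to at exactly this step are wrong. You claim $\tau\mapsto\zeta_1^{\,b_1d}$ on the $A_1$ side and $\tau\mapsto\zeta_2^{\,(b_2+kq_2)d}$ on the $A_2$ side; the correct images carry no factor of $d$: lifting the explicit loop $\gb(t)=[(\xi_1(t)^{-b_1}\Z_d,\xi_1(t)^d)]$ to the universal covers $\SU(2)\times D^2$ of $A_1$ and $A_2$ (the latter after composing with $\varphi$) gives $i_{1*}(\gb)=e_1^{\,b_1}$ and $i_{2*}(\gb)=e_2^{-(b_2+kq_2)}$. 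With your exponents the resulting relation is $\zeta_1^{\,b_1d}=\zeta_2^{\,(b_2+kq_2)d}$, and the Smith normal form computation does \emph{not} return $\gcd(n_1,n_2,l)$. Concretely, take $n_1=n_2=d\geq 2$, so $q_1=q_2=1$, $b_1=b_2=0$, $l=k$ with $\gcd(k,d)=1$: your relations reduce to $e_1=e_2$ and $e_2^{\,kd}=1$, which is vacuous modulo $e_2^{\,d}=1$, giving $\pi_1\simeq\Z_d$. But $\fN_{d,d}^k$ is the quotient of $\Sph^3\times\Sph^3$ by a free circle action and is simply-connected, consistent with $\gcd(d,d,k)=1$. (Your own proposed sanity checks would not all have saved you: the $d=1$ and $\fN_{1,1}^l$ cases cannot see a factor of $d$, though $\fN_{2,2}^1$ would have exposed it had you carried it out.)

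Since you explicitly defer the verification of these exponents and describe the rest as routine, this is not a cosmetic slip: the one nontrivial step of the argument is asserted with an incorrect value, and the asserted value does not prove the proposition. To repair it you must do what the paper does, namely write down explicit lifts of the generating loops $\ga$, $\gb$ of $\pi_1(A_1\cap A_2)$ to $\SU(2)\times\R$ and to $\SU(2)\times D^2$, identify the deck transformations matching the endpoints (which is where the identity $a_jb_j\equiv 1 \bmod q_j$ enters and where the factor of $d$ disappears), and only then run the gcd computation on the relations $e_1^{n_1}=e_2^{n_2}=1$, $e_1^{q_1}=e_2^{q_2}$, $e_1^{b_1}=e_2^{-(b_2+kq_2)}$.
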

\begin{proof}
To compute the fundamental group of $M$, we describe the action of \mbox{$\pi_1(\SU(2) \times_{\Z_{n_1}} \Sph^1)$} on the universal covering $\SU(2) \times \R$, such that the quotient is $A_1 \cap A_2=\SU(2) \times_{\Z_{n_1}} \Sph^1$. Take curves $\ga$ and $\gb$ in $A_1 \cap A_2$ that are generators of the fundamental group.
 For each $j=1,2$, we include $A_1 \cap A_2$ in the component $A_j$ by the inclusion $i_j$ and use the \mbox{$\pi_1(A_j)$-action} on the universal covering $\SU(2) \times D^2$ of $A_j$ to regard the loops $\ga$ and $\gb$ as elements of $\pi_1(A_j)$.
  Van Kampen's Theorem asserts that the fundamental group of $M$ is the free product $\pi_1(A_1)*\pi_1(A_2)$ with relations ${i_1}_*[\ga] = {i_2}_*[\ga]$ and ${i_1}_*[\gb] = {i_2}_*[\gb]$, where the maps \mbox{${i_j}_*:\pi_1(A_1\cap A_2) \ra \pi_1(A_j)$}, for $j=1,2$ are induced by $i_j$ on the fundamental groups.

The action of the fundamental group $\pi_1(A_1\cap A_2) \simeq \Z_d \times \Z$ on $\SU(2) \times \R$ with quotient \mbox{$\SU(2) \times_{\Z_{n_1}} \Sph^1$} is given by 
$$(\ovl{u},k)\cdot (p,s) = (p \xi_1^{-(uq_1 + kb_1)}, s + 2\pi k/q_1),$$
where $\Z_d = \{\ovl{0}, \ovl{1}, \cdots, \ovl{d-1}\}$ and $a_1b_1 + u_1q_1 = 1$.
Observe that the  quotient \mbox{$(\SU(2) \times \R) /(\Z_d \times \Z)$} is exactly the orbit space  $(\SU(2) \times \Sph^1)/\Z_{n_j}$. Indeed, since  $\gcd(b_1,q_1)=1$, for any $0\leq r < n_1$ there are integers $u$ and $k$ with $0 \leq u < d$ such that $r = uq_1 + kb_1$. So, the $\Z_{n_1}$-action can be written~as 
$$ \xi_1^r\cdot(p,e^{is}) = (p\xi_1^{-(uq_1 + kb_1)}, \exp(s + 2\pi ua_1 +2\pi ka_1b_1/q_1)) =
 (p\xi_1^{-(uq_1 + kb_1)}, \exp(s +  2\pi k/q_1)),  $$
that clearly defines the same quotient as $(\SU(2) \times \R) /( \Z_d \times \Z)$.

It is convenient to define the loops in $\SU(2)/\Z_d \times_{\Z_{q_1}}\Sph^1$ that generate its fundamental group, $\Z_d \times \Z$. The loop $\ga: [0,1] \ra A_1 \cap A_2$ defined by $\ga(t) = [(\xi_1(t)^{-q_1}\Z_d, 1)]$ corresponds to $(\ovl{1},0)$ in $\Z_d \times \Z$. In fact, let $\wt{\ga}(t) = (\xi_1(t)^{-q_1},0)$ be a lifting of $\ga$ by $(1,0) \in \SU(2) \times \R$. So, $\wt{\ga}(1) = (\xi_1^{-q_1},0) = (\ovl{1},0)_{\Z_d \times \Z}\cdot(1,0) = (\ovl{1},0)_{\Z_d \times \Z}\cdot \wt{\ga}(0)$. 

On the other hand, the loop $\gb:[0,1] \ra A_1\cap A_2$ defined by $\gb(t) = [(\xi_1(t)^{-b_1}\Z_d, \xi_1(t)^d)]$ corresponds to $(\ovl{0},1) \in \Z_d \times \Z$. In fact, consider $\wt{\gb}(t) = (\xi_1(t)^{-b_1}, 2\pi t/q_1)$, a lifting of $\gb$ to $\SU(2) \times \R$ by $\wt{\gb}(0) = (1,0)$. Then $\wt{\gb}(1)=  (\xi_1^{-b_1}, 2\pi/q_1 ) =(\ovl{0},1)_{\Z_d \times \Z} \cdot \wt{\gb}(0)$. So, $\gb$ corresponds to $(\ovl{0},1) \in \Z_d \times \Z$.

If $e_j$ is a generator of $\Z_{n_j}$ in the free product $\Z_{n_1} * \Z_{n_2}$, the induced loop $(i_1\circ \ga)(t)$ corresponds to $e_1^{q_1}$. The loop $(i_1 \circ \gb)$ in $A_1$ corresponds to $e_1^{b_1} \in \Z_{n_1}$. In fact, the lifting of $i_1 \circ \gb$ by the point $(1,1)$ of $\SU(2) \times D^2$ is the curve $t \mapsto (\xi_1(t)^{-b_1}, \xi_1(t)^d)$. Then, 
$$\xi_1^{b_1} \cdot \wt{(i_1 \circ \gb)}(0) = \xi_1^{b_1} \cdot (1,1) = (\ovl{\xi_1}^{b_1}, \xi_1^d) = \wt{(i_1 \circ \gb)}(1).$$
That is, ${i_1}_*:\Z_{d} \times \Z \ra \Z_{n_1}$ takes $(\ga^u,\gb^n)$ to $e_1^{uq_1 + nb_1}$.

We need to include the loops $\ga$ and $\gb$ in $A_2$. To do this we simply use the composition,~$i_2$, of the clutching function $\varphi$ with the inclusion of $\SU(2)/\Z_d \times_{\Z_{q_2}} \Sph^1$ in $A_2$. 
 The induced loop $(i_2 \circ \ga)(t) = \lb(\xi_2(t)^{-q_2}\Z_d, 1)\rb$ has lifting by $(1,1) \in \SU(2) \times D^2$, with end point $(\xi^{-q_2}, 1) = \xi_2^{q_2} \cdot (1,1)$. So, $(i_2 \circ \ga)$ corresponds to $e_2^{q_2} \in \Z_{n_2}$. 
The loop $(i_2 \circ \gb)(t) = \lb(\xi_2(t)^{(b_2 + kq_2)}\Z_d, \xi_2(t)^{-d})\rb$  has lifting   $t \mapsto(\xi_2(t)^{(b_2 + kq_2)}, \xi_2(t)^{-d})$ that starts at \mbox{$(1,1) \in \SU(2) \times D^2$} and that ends at $(\xi_2^{(b_2 + kq_2)}, \xi_2^{-d}) = \xi_2^{(-b_2 -  kq_2)}\cdot(1,1)$, so the loop corresponds to $e_2^{-(b_2+q_2k)} \in \Z_{n_2}$. Therefore the map ${i_2}_*$ takes $(\ga^u, \gb^n)$ to $e_2^{(u-kn)q_2-nb_2}$.
We conclude that the fundamental group of $M$ is generated by $e_1$ and $e_2$ with the relations $e_1^{n_1} =e_2^{n_2} = 1$, $e_1^{q_1} = e_2^{q_2}$ and $e_1^{b_1}=e_2^{-b_2 - kq_2}$. From these three identities and  $\gcd(b_j, q_j) = 1$ we see that \mbox{$\pi_1(M) \simeq \Z_{\gcd(n_1,n_2, b_1q_2 + b_2q_1 +kq_1q_2)}$}.
\end{proof}

\begin{rem} As a consequence of \pref{thm fund gp of M}, $d=\gcd(n_1,n_2)$ when the $\SU(2)$-manifold is simply-connected. In fact,  $\gcd(n_1,n_2,l)=1$ and $\gcd(b_j,q_j)=1$ imply that \mbox{$\gcd(q_j, l)=1$}, therefore $\gcd(q_1,q_2)=1$. So, the $\SU(2)$-action depends on a triple of integer parameters that belongs to $\fp=\{(b_1,b_2,k) \in \Z^3\,:\, 0\leq b_j<q_j \, , \, (b_j,q_j)=1\, ,\, j=1, 2\}$.\end{rem}

%  When $d=2$, if either $q_1$ or $q_2$ is even, the manifold $M(b_1,b_2, \epsilon)$ has trivial fundamental group for both values of $\epsilon$. If $q_1$ and $q_2$ are odd, then $M(b_1,b_2, \epsilon)$ is simply-connected for only one value of the parameter $\epsilon$ since $b_1q_2 + b_2q_1$ and $b_1q_2 + b_2q_1 + q_1q_2$ have different parity.
The proof of the following result is a simple computation, then it will be omitted.
\begin{prop}\label{claim bijection} The map $l(b_1,b_2,k)=b_1q_2 + b_2q_1 + kq_1q_2$ is a bijection between the sets \mbox{$\fp = \{(b_1, b_2, k) \in \Z^3 : 0 \leq b_j < q_j  ,  (b_j,q_j)=1, j=1, 2 \}$} and \mbox{$\fq=\{ l \in \Z : (l,q_j)=1,\, j=1, 2\}$}. 
%Moreover, to the nonnegative values of $l$ correspond to  $k\geq -1$.
\end{prop}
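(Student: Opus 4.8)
The plan is to recognize \pref{claim bijection} as an elementary Chinese Remainder Theorem statement, exploiting that $q_1$ and $q_2$ are coprime (as established in the Remark preceding the Proposition). First I would check well-definedness, i.e. that $l(b_1,b_2,k)\in\fq$ whenever $(b_1,b_2,k)\in\fp$: reducing $l=b_1q_2+b_2q_1+kq_1q_2$ modulo $q_1$ gives $l\equiv b_1q_2\pmod{q_1}$, and since $\gcd(q_2,q_1)=1$ together with $\gcd(b_1,q_1)=1$ this forces $\gcd(l,q_1)=1$; by the symmetric computation modulo $q_2$ one gets $\gcd(l,q_2)=1$, so $l\in\fq$.

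For injectivity I would argue that if $l(b_1,b_2,k)=l(b_1',b_2',k')$ then reducing modulo $q_1$ yields $b_1q_2\equiv b_1'q_2\pmod{q_1}$, hence $b_1\equiv b_1'\pmod{q_1}$ because $q_2$ is invertible modulo $q_1$; since $0\le b_1,b_1'<q_1$ this gives $b_1=b_1'$, and likewise $b_2=b_2'$; cancelling the nonzero factor $q_1q_2$ then forces $k=k'$. For surjectivity, given $l\in\fq$, let $q_2^{-1}$ be an inverse of $q_2$ modulo $q_1$ and $q_1^{-1}$ an inverse of $q_1$ modulo $q_2$ (both exist since $\gcd(q_1,q_2)=1$), and define $b_1\in\{0,\dots,q_1-1\}$ by $b_1\equiv lq_2^{-1}\pmod{q_1}$ and $b_2\in\{0,\dots,q_2-1\}$ by $b_2\equiv lq_1^{-1}\pmod{q_2}$. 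Since $lq_2^{-1}$ is a unit modulo $q_1$ and $lq_1^{-1}$ a unit modulo $q_2$, the constraints $\gcd(b_1,q_1)=\gcd(b_2,q_2)=1$ hold. By construction $l-b_1q_2-b_2q_1\equiv0\pmod{q_1}$ and $l-b_1q_2-b_2q_1\equiv0\pmod{q_2}$, so it is divisible by $\lcm(q_1,q_2)=q_1q_2$; writing it as $kq_1q_2$ exhibits $(b_1,b_2,k)\in\fp$ with $l(b_1,b_2,k)=l$.

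There is no real obstacle here — the whole argument is a bookkeeping exercise with congruences. The only points that require (minor) care are verifying that the residue choices in the surjectivity step preserve the coprimality conditions defining $\fp$, and observing that once $b_1$ and $b_2$ are pinned down the quantity $l-b_1q_2-b_2q_1$ is forced to be a multiple of $q_1q_2$; this last step is precisely where coprimality of $q_1$ and $q_2$ (so that $\lcm(q_1,q_2)=q_1q_2$) is used, and it is also what makes $k$ unique. This is why the paper remarks that the computation can be omitted.
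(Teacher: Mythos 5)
Your argument is correct and is precisely the ``simple computation'' that the paper declares and omits: reduction of $l$ modulo $q_1$ and $q_2$ pins down $b_1$ and $b_2$ (using $\gcd(q_1,q_2)=1$ from the remark preceding \pref{claim bijection}), and divisibility of the remainder by $q_1q_2$ pins down $k$. Nothing to add; this is the intended verification.
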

% \begin{proof} 
% For any $l \in \fq$ given, there is a unique solution $(x_o, y_o)$ for the diofantine equation $xq_1 + yq_2 = l$ satisfying $0 \leq x_o < q_2$. Call $x_o=b_2$ and note that $b_2$ is coprime with $q_2$ since $l$ is. On the other hand, there are integers $k$ and $b_1$ both uniquely determined by the properties $y_o = kq_2 + b_1$ and $0\leq b_1 < q_2$. Thus we have $l =  b_1q_2 + q_1(b_2 +kq_2)$ that clearly implies that $b_1$ and $q_1$ are coprime since $l \in \fq$. This establishes the bijection between $\fp$ and $\fq$. 
% \end{proof} 

We will show now that the $G$-manifold $M(b_1,b_2,k)$ is equivariantly diffeomorphic to $\fN_{n_1,n_2}^l$ where \mbox{$l = b_1q_2 + b_2q_1 + kq_1q_2$}. For the next result recall that we have defined $\gcd(0,0)=1$.

 \begin{thm}\label{lem equivariance} Let $n_1 \leq n_2$ be positive integers with $d = \gcd(n_1,n_2)\geq 2$, set $q_j = n_j/d$ and take  $b_j \in \Z$ coprime with $q_j$ satisfying $0 \leq b_j < q_j$, for $j=1,2$. Let $k$ be an integer for $d \geq 3$ or $k \in \Z_2$ for $d=2$. Then, the $\SU(2)$-manifold $M(b_1,b_2,k)$ is equivariantly diffeomorphic to $\fN_{n_1,n_2}^l$, where $l = b_1q_2 + b_2q_1 + kq_1q_2$. Moreover, these $\SU(2)$-manifolds are pairwise nonequivalent except for $M(b_1,b_2,k) = M(q_1 - b_1, q_2 - b_2, -k - 2)$.
 \end{thm}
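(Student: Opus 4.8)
The plan is to recognise each $M(b_1,b_2,k)$ as one of the $\fN_{n_1,n_2}^l$ by matching the two ways these manifolds are built out of slice neighbourhoods, and then transport to the $M(b_1,b_2,k)$ the invariants already attached to the $\fN_{n_1,n_2}^l$. By construction $M(b_1,b_2,k)=A_1\cup_\varphi A_2$ with $A_j=\SU(2)\times_{\Z_{n_j}}D^2$, the $\Z_{n_j}$-action on $\SU(2)\times D^2$ being $\xi_j\cdot(p,z)=(p\ovl{\xi_j},\xi_j^{da_j}z)$ (with $a_j$ the inverse of $b_j$ modulo $q_j$), and $\varphi$ given up to homotopy by \eqref{eqn clutching} with $l=b_1q_2+b_2q_1+kq_1q_2$. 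On the other hand \pref{prop invariant} and \lref{lem clutching} present $\fN_{n_1,n_2}^l$ in exactly the same shape. I would first check that the slice data agree: reducing the defining equality of $l$ modulo $q_j$ gives $l\equiv b_jq_i\pmod{q_j}$ for $\{i,j\}=\{1,2\}$, hence $la_j\equiv q_i\pmod{q_j}$, which is precisely the condition under which the slice representation $\xi\mapsto\xi^{da_j}$ of $A_j$ equals the representation $\xi\mapsto\xi^{n_il^{-1}}$ of \pref{prop invariant}; and the two clutching maps agree after replacing $l$ by $-l$, which is immaterial by \rref{rem sign of l}. With the slice data identified, \pref{prop k} then gives the equivariant diffeomorphism $M(b_1,b_2,k)\cong\fN_{n_1,n_2}^l$, while \pref{claim bijection} says that the admissible triples $(b_1,b_2,k)$ correspond bijectively to the values of $l$ coprime to $q_1$ and $q_2$ (for $d\ge 3$; modulo $2q_1q_2$ when $d=2$, where $k\in\Z_2$).

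For the coincidence, a direct computation gives $l(q_1-b_1,q_2-b_2,-k-2)=-\,l(b_1,b_2,k)$; combined with the identification just made and $\fN_{n_1,n_2}^{-l}=\fN_{n_1,n_2}^{l}$ (\rref{rem sign of l}) this yields $M(b_1,b_2,k)\cong M(q_1-b_1,q_2-b_2,-k-2)$. (When some $b_j$ vanishes this forces $q_j=1$ and $q_j-b_j$ is read modulo $q_j$; for $d=2$ one has $-k-2\equiv k$ in $\Z_2$. Alternatively, this relation is exactly the effect of \rref{rmk slice} applied to both slices, with the clutching parameter corrected accordingly.)

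It remains to rule out any further coincidence. The isotropy types $(\Z_{n_1}),(\Z_{n_2}),(\Z_d)$ recover $n_1\le n_2$ and $d$. By \rref{rmk slice} the slice representation at each exceptional orbit is well defined only up to $a_j\leftrightarrow q_j-a_j$, i.e. up to $b_j\leftrightarrow q_j-b_j$, so an equivariant diffeomorphism can carry $(b_1,b_2)$ only to one of the four pairs obtained by flipping each coordinate. The key point is that it cannot flip just one: fixing orientations of $M$ (which is $\Sph^3\times\Sph^2$ or $\Sph^3\,\wt{\times}\,\Sph^2$, hence orientable) and of $\SU(2)$ orients each exceptional orbit $\SU(2)/\Z_{n_j}$ and therefore its normal slice, and since a $G$-equivariant self-map of $\SU(2)/\Z_{n_j}$ is a left translation it preserves that orientation; thus an equivariant diffeomorphism is orientation-preserving on both slices or on neither, whereas flipping a single $b_j$ reverses the orientation of exactly one of them. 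Hence $(b_1,b_2)$ changes only by the simultaneous flip, and then \pref{prop k} — which for fixed $(b_1,b_2)$ identifies $M(b_1,b_2,k)$ with the homotopy class $k$ of its clutching map — forces $k$ to be preserved in the first case and, after composing with the equivalence of the previous paragraph, to become $-k-2$ in the second. This is exactly the asserted list. For $d\ge 3$ one may instead invoke \pref{claim fixed} directly: the fixed point set of the principal isotropy group $H\cong\Z_d$ is a disjoint union of two lens spaces with fundamental group $\Z_{|l|}$, an invariant of the equivariant diffeomorphism type, so $|l|$ — and hence, via \pref{claim bijection} and \rref{rem sign of l}, the triple up to the flip above — is determined.

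The one genuinely delicate step is the last one: establishing that an equivariant diffeomorphism cannot reverse the orientation of a single slice (equivalently, extracting from \pref{prop k} and the slice data that no equivalence survives beyond those exhibited), which for $d=2$ is essential since there $H=\Z_2$ is central and the fixed-point-set invariant of \pref{claim fixed} degenerates. Everything else is bookkeeping once \pref{prop invariant}, \lref{lem clutching}, \pref{prop k}, \pref{claim bijection}, \pref{claim fixed} and \rref{rem sign of l} are in hand.
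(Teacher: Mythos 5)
Your proposal is correct and follows essentially the same route as the paper: identify $M(b_1,b_2,k)$ with $\fN_{n_1,n_2}^l$ by matching the slice representations of \pref{prop invariant} and the clutching class of \lref{lem clutching} via \pref{prop k}, use \pref{claim bijection} together with $\fN_{n_1,n_2}^{-l}=\fN_{n_1,n_2}^{l}$ for the coincidence $M(b_1,b_2,k)=M(q_1-b_1,q_2-b_2,-k-2)$, and rule out further coincidences by the orientation argument of \rref{rmk slice} (supplemented for $d\ge 3$ by the fixed-point-set invariant of \pref{claim fixed}), exactly as the paper does. One cosmetic slip: an equivariant self-map of $\SU(2)/\Z_{n_j}$ is a \emph{right} translation by an element of the normalizer, not a left translation, but since right translations on the connected group $\SU(2)$ are isotopic to the identity the orientation-preservation you need still holds.
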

 \begin{proof}
It is clear that $M(b_1,b_2,k)$ is determined by the isotropy representations around the exceptional orbits and the homotopy class of the clutching function $\varphi$. So, \mbox{$M(b_1,b_2,k) = \fN_{n_1,n_2}^l$} with $l = b_1q_2 + b_2q_1 +kq_1q_2$ since by \pref{prop invariant} they coincide in both representations and also have the same clutching function, up to homotopy. Moreover, Propositions \ref{thm fund gp of M} and \ref{claim bijection} show that each $l$ is reached exactly once by that formula.

For $d=2$ we know that $k\in\{0,1\}$ since the homotopy class of the clutching function is defined modulo 2. We use the identity $M(b_1,b_2,k) = \fN_{n_1,n_2}^l$ and  $\fN_{n_1,n_2}^{-l} = \fN_{n_1,n_2}^l$  (see, \rref{rem sign of l}) to conclude that $M(b_1,b_2,k) = M(q_1 - b_1,q_2 - b_2,-k-2) = M(q_1 - b_1,q_2 - b_2,k)$ for $k=0$ or~1. \rref{rmk slice} shows that  otherwise these $\SU(2)$-manifolds are pairwise distinct.

For $d\geq 3$ two $G$-manifolds $M(b_1,b_2,k)$ are equivalent if and only if they have the same number \mbox{$l = b_1q_2 + b_2q_1 + kq_1q_2$}, see \pref{claim fixed}. 
This and \rref{rmk slice} imply that $M(b_1,b_2,k)$ is equivalent to $M(b_1',b_2',k')$ if and only if the parameters are exactly the same, or $b_j'=q_j - b_j$ and $k'=-k-2$. This corresponds to replace $l$ by $-l$ in~$\fN_{n_1,n_2}^l$.
\end{proof}

Observe that it is a consequence of the discussion in the proof of \tref{lem equivariance} that the manifolds $\fN_{n_1,n_2}^{l}$ and  $\fN_{n_1,n_2}^{l'}$ are equivalent if, and only if, $l \equiv l' \mod 2q_1q_2$ for $d = 2$. This concludes the proof of Theorem \ref{thm exceptional SU(2)}.

\section{Actions with singular orbits and the proof of Theorem D}\label{sec singular}

For simply-connected compact $G$-manifolds of dimension 5 with singular orbits the number of orbit types cannot exceed 3, c.f., \lref{lem restrictions} below.
In \sref{sec 2OT} we use a classical result by Bredon~\cite{Bredon}, Hsiang and Hsiang~\cite{Hsiang1967} and Janich~\cite{Janich1966} to classify the actions with exactly two orbit types. In \sref{sec 3OT} we make a few comments about Hudson's work~\cite{Hudson1977} on $\SO(3)$-manifolds with three orbit types.

The classification of $\SO(3)$-actions on simply-connected 5-manifolds with singular orbits and cohomogeneity 2 was carried out by Hudson in \cite{Hudson1977}. The actions with cohomogeneity~3 were discussed in the same paper, but the $\SO(3)$-manifolds in \eref{ex kmissed} were overlooked.
For the sake of completeness we classify again the \mbox{$\SO(3)$-manifolds} without fixed points since it can be obtained together with the $\SU(2)$ case. 

The following lemma gives strong restrictions to the possible chains of isotropy subgroups. It is inspired by Lemma 1A in \cite{Hudson1977}.

\begin{lem} \label{lem restrictions} Let $M$ be a 5-dimensional simply-connected $G$-manifold with singular orbits. 
\begin{enumerate}[(a)]
\item \label{itemlem 2}If the action has exactly two orbit types, say $(H) \leq (K)$, then the pair of principal and singular isotropy groups $(H,K)$ is $(\Z_m,\SO(2))$, $(\D_m, \O(2))$ or $(\SO(2), \SO(3))$ if \mbox{$G=\SO(3)$} and $(\Z_m,\SO(2))$ or $(\{1\},\SU(2))$, if $G=\SU(2)$, for $m \geq 1$;

\item \label{itemlem 3} If the action has three orbit types then the isotropy types are $\Z_2 \times \Z_2 \subset  \O(2) \subset  \SO(3)$. There is no $\SU(2)$-action with three isotropy types;
\item \label{itemlem 4} Neither $\SO(3)$ nor $\SU(2)$ acts on $M$ with more than three orbit types.
\end{enumerate}
\end{lem}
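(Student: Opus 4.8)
The plan is to run a case-by-case analysis on the chains of isotropy subgroups, using three main ingredients: the structure of the slice representation at a singular orbit, the restrictions on the orbit space coming from Proposition~\ref{prop quotient}, and the simply-connectedness of $M$ together with the simple-connectedness of $M/G$. First I would observe that a singular orbit $G/K$ has a slice representation $\rho\colon K\to \O(c)$ where $c=\codim(G/K)$ equals $2,3,4$ or $5$, and that the principal isotropy group $H$ is (up to conjugacy) the principal isotropy of this slice representation. Since $\dim K>\dim H$, the group $K$ has positive dimension, so $K$ is one of $\SO(2)$, $\O(2)$, $\SO(3)$ (for $G=\SO(3)$) or $\SO(2)$, $\Pin(2)$, $\SU(2)$ (for $G=\SU(2)$). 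The codimension of $G/K$ is $5-\dim(G/K)=5-\dim G+\dim K$, so for $G$ of dimension~3 this is just $2+\dim K$; hence $c=3$ when $K$ has dimension~1, $c=4$ when $K=\O(2)$ or $\Pin(2)$, and $c=5$ when $K=G$.

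Next I would go through the possibilities for $K$ one at a time and read off $H$ from the slice representation, discarding the cases that violate the orbit-space constraints. If $K=\SO(2)$, the slice is a $3$-dimensional representation of $\SO(2)$; its kernel is a finite cyclic subgroup, and the effective part is a rotation, so $H$ is cyclic $\Z_m$ (this gives the pair $(\Z_m,\SO(2))$ for both groups). If $K=\O(2)$ (only inside $\SO(3)$), the slice is a $4$-dimensional $\O(2)$-representation; since the orbit space near this orbit is a quotient of $\R^4$ by $\O(2)$ and this contributes a boundary/edge of $M/G$, the principal isotropy is the kernel of the rotation part together with the reflection, which is a dihedral $\D_m$, giving $(\D_m,\O(2))$. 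If $K=\SO(3)=G$, then $G/K$ is a fixed point, the slice is a $5$-dimensional representation of $\SO(3)$; decomposing into irreducibles ($\R^5=\R^3\oplus\R^2$ is impossible since $\SO(3)$ has no $2$-dimensional representation, while $\R^5=\R^5_{\mathrm{irr}}$, $\R^3\oplus\R^1\oplus\R^1$, $\R^3\oplus\R^3$ truncated, etc.) one finds the principal isotropy is $\SO(2)$ in the relevant case, giving $(\SO(2),\SO(3))$; the irreducible $\R^5$ would force principal isotropy $\Z_2\times\Z_2$ and a third (edge) orbit type $\O(2)$, which is the three-orbit-type situation. For $G=\SU(2)$, $K=\Pin(2)$ would give an $\O(2)$-like edge, but then $K$ is not maximal — a fixed point would have to occur, and since $\SU(2)$ has no $2$- or $4$-dimensional real representation with the right structure, one is forced into $K=\SU(2)$ with slice $\R^4=\C^2$ the standard representation (principal isotropy trivial, pair $(\{1\},\SU(2))$) or $\R^5=\C^2\oplus\R$, which again reduces to the same pair; the $\Pin(2)$ case is eliminated because it would need a further fixed point with an incompatible slice. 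Part~(\ref{itemlem 4}) then follows because a chain $H<K_1<K_2<K_3<\cdots$ of isotropy subgroups inside a $3$-dimensional group, with each inclusion strict, can have length at most giving three orbit types (the only way to get three is $\Z_2\times\Z_2<\O(2)<\SO(3)$, which has no room for a fourth term), and part~(\ref{itemlem 3}) is exactly this observation applied to the irreducible $\SO(3)$-slice at a fixed point, noting $\SU(2)$ has no real irreducible $5$-dimensional representation with $\Z_2\times\Z_2$ isotropy.

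The main obstacle I expect is \textbf{ruling out the mixed/intermediate cases cleanly} — in particular showing that an $\O(2)$ or $\Pin(2)$ singular isotropy \emph{forces} the existence of a fixed point (an $\SO(3)$- resp.\ $\SU(2)$-orbit), and that for $\SU(2)$ this is incompatible with dimension~$5$ because $\SU(2)$ has no suitable low-dimensional real representation. This requires carefully tracking the boundary structure of $M/G$: by Proposition~\ref{prop quotient}(\ref{B1}), in cohomogeneity~$2$ the boundary of the surface $M/G$ consists of singular orbits, and an $\O(2)$-edge must terminate at vertices which are higher singular orbits, the only candidate being a fixed point; combined with simple-connectedness of $M/G\simeq\Sph^2$-with-boundary this pins down exactly the chains listed. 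The bookkeeping of which representations of $\SO(3)$ and $\SU(2)$ occur as slices, and what principal isotropy each produces, is routine but must be done exhaustively; this is precisely the ``case-by-case analysis on the finite subgroups'' style argument the paper uses elsewhere, and I would present it compactly in a table rather than spelling out every subcase.
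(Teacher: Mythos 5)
Your overall strategy --- reading off the principal isotropy group from the slice representation at each possible singular isotropy group $K$, followed by a case-by-case analysis of the subgroups of $\SO(3)$ and $\SU(2)$ --- is the same as the paper's, but several steps as written are wrong or missing. First, the codimension bookkeeping is inconsistent: $\O(2)$ and $\Pin(2)$ are one-dimensional, so by your own formula $c=2+\dim K$ the slice at such an orbit is $\R^3$, not $\R^4$; the pair $(\D_m,\O(2))$ arises from the three-dimensional slice representation $\R_{\mathrm{triv}}\oplus\rho_m$ of $\O(2)$, not from a four-dimensional one. Second, your claim that an $\O(2)$- or $\Pin(2)$-edge ``forces a fixed point'' contradicts part (a) of the lemma itself, which lists $(\D_m,\O(2))$ as a legitimate two-orbit-type pair for $\SO(3)$; those actions are discarded only afterwards because the resulting manifolds fail to be simply-connected (examples (h) and (i) in \sref{sec 2OT}), not at the level of this lemma. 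The actual reason $\Pin(2)$ cannot occur as a singular isotropy group of an effective $\SU(2)$-action is absent from your argument: every real representation of $\Pin(2)$ of dimension at most $3$ is trivial on the central element $-1\in\SU(2)$ (the representations with $-1\mapsto-\Id$ are quaternionic, hence of real dimension $4$), so $-1$ would lie in the kernel of the slice representation, hence in the principal isotropy group, and the action would factor through $\SO(3)$.

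For parts (b) and (c) you assume the isotropy types form a single chain $H<K_1<K_2<\cdots$, but orbit types are only partially ordered, so chain length does not bound their number. The paper's argument is genuinely different: since by \pref{prop quotient} there are no exceptional orbits, four or more orbit types (or three without a fixed point) would force two distinct one-dimensional isotropy types $(K)$ and $(K')$; the corresponding strata both project into the boundary circle of the disk $M/G$ and therefore accumulate at a common point $p$, whose isotropy group contains conjugates of both $K$ and $K'$ and --- after ruling out $\O(2)$ and $\Pin(2)$ slice representations on $\R^3$ compatible with the absence of exceptional orbits --- must be all of $G$. The five-dimensional slice representation at such a fixed point admits at most three orbit types, which simultaneously pins down part (b) (the chain $\Z_2\times\Z_2\subset\O(2)\subset\SO(3)$ coming from the irreducible representation) and excludes a fourth type. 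Without some version of this ``two one-dimensional strata must meet over the boundary'' argument, part (c) is not established.
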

\begin{proof}[Sketch of the proof] In general, for a singular point $p \in M^n$, if $K = G_p$ and $k$ is the codimension of $G(p)$, it is known that the slice action $K \hra \R^k$, of the isotropy group $K$ on the tangent space of a slice at $p$, is a linear action which has the same isotropy structure as the action $G \hra M$ in a neighborhood of $p$. So the chain of isotropy types $(H) \leq \cdots \leq (K)$ is possible for a $G$-action on $M$ only if there is a representation $\rho:K \ra \O(k)$ such that the action $\rho(K) \hra \R^k$ has the same chain of isotropy types. Now, the proof of \ref{itemlem 2} and \ref{itemlem 3} follow of a case-by-case analysis in the subgroups of $\SO(3)$ and $\SU(2)$ and its representations in $\O(3)$.

The subgroups of $G$ have dimensions either zero, one or three. If there are three isotropy types and $(G)$ is not one of them, two of them have the same dimension. 
Since there are no exceptional orbits, there are two one-dimensional isotropy types, say $(K)$ and $(K')$. Let $M_{(K)}$ and $M_{(K')}$ be the set of points in $M$ whose isotropy groups belong to $(K)$ and~$(K')$, respectively. Both sets $M_{(K)}$ and $M_{(K')}$ are submanifolds of $M$ and are projected to the boundary  of the disk in the quotient. 
So there is a point $p \in M$ such that in any neighborhood of $p$ there are orbits of type $(G/K)$ and $(G/K')$, but there is no representation neither of $\O(2)$ nor $\Pin(2)$ on $\O(3)$ with isotropy $\SO(2)$. 
This shows that if the action has more than two distinct orbit types, then there is a fixed point. It also shows that there is no 5-dimensional $G$-action with more than three orbit types and the lemma follows.
\end{proof}

\subsection{Actions with singular orbits and two orbit types}\label{sec 2OT}

%In order to classify the $G$-manifolds with exactly two orbit types in the presence of singular orbits we will apply a general classification of actions and \lref{lem restrictions} to count as many classes of actions with two orbit types (one of them being singular) exist. This will be done for the distinct orbit spaces (see Propositions \ref{B1} and \ref{prop 3M bound} below).

The Second Classification Theorem in \cite{Bredon}, p. 257 and p. 331 shows, in particular, that given a contractible topological manifold $X$  with non-empty boundary and  two subgroups $H \subset K$ of a group $G$ with $\dim H < \dim K$, the set of classes of $G$-manifolds with orbit space $X$ and isotropy types $(H)$ and $(K)$ is in one-to-one correspondence with the quotient
\begin{equation}\label{eqn action homotopy} \pi_{n-1}\left(N(H)/(N(H) \cap N(K)) \right)\,/\,\pi_o\left(N(H)/H \right).\end{equation}
For our purposes an explicit expression for the $\pi_o(N(H)/H)$-action above is not needed  since the groups involved in (\ref{eqn action homotopy}) are quite simple, see Table \ref{tb two isot}.

\begin{table}[hbt!]
\begin{center}
\begin{tabular}{ccccccc}
&H & K & G & $\pi_{n-1}\left(N(H)/(N(H) \cap N(K)\right)$ & $\pi_o(N(H)/H)$ & $l$  \\
\hline
%\vspace{-0.3cm}\\

{\bf (a)}&$\{1\}$ & $\SO(2)$ & $\SO(3)$ & $\pi_1(\RP^2)$ & 1      & 2\\

{\bf (b)}&$\{1\}$ & $\SO(2)$ & $\SU(2)$ & $\pi_1(\RP^2)$ & 1      & 2 \\
\hline
%\vspace{-0.3cm}\\

{\bf (c)}&$\{1\}$  & $\SU(2)$   & $\SU(2)$ & $\{0\}$        & 1 & 1  \\
\hline
%\vspace{-0.3cm}\\

{\bf (d)}&$\Z_2$ & $\SO(2)$ &  $\SO(3)$ &   $\{0\}$    & 1 & 1\\
%\vspace{-0.3cm}\\
%
{\bf (e)}&$\Z_2$ & $\SO(2)$ &  $\SU(2)$ &   $\pi_1(\RP^2)$ & $1$ & 2 \\
\hline
%\vspace{-0.3cm}\\

{\bf (f)}&$\Z_m$ & $\SO(2)$ &  $\SO(3)$ &   $\{0\}$    & 1 & 1 \\

{\bf (g)}& $\Z_m$ & $\SO(2)$ &  $\SU(2)$ &  $\{0\}$     & 1 & 1 \\
\hline
%\vspace{-0.3cm}\\

{\bf (h)} & $\Z_2$ & $\O(2)$ & $\SO(3)$   &$\pi_1(\Sph^1)$ & $\Z_2$ & $\Z$ \\
\hline
%\vspace{-0.3cm}\\

{\bf (i)} & $\D_m$  & $\O(2)$   & $\SO(3)$ & $\{0\}$        & $\Z_2$& 1  \\
\hline
%\vspace{-0.3cm}\\

{\bf (j)} & $\SO(2)$  & $\SO(3)$   & $\SO(3)$ & $\{0\}$        & 1 & 1  \\
\hline
\end{tabular}
\caption{\label{tb two isot} {\small $l$ is an upper bound for the number of actions with 2 isotropy types $H\subset K$}}
\end{center}
\end{table}
\vspace{-0,6cm}

Due to Lemma \ref{lem restrictions} and the Second Classification Theorem  we obtain an upper bound $l$ (see Table~\ref{tb two isot}) for the number of \mbox{$G$-manifolds} with exactly two orbit types and quotient a 2 or 3-disk, hence the classification of $G$-actions with contractible orbit space is complete by showing that we have as many examples as possible. The examples below represent the corresponding  enumeration in Table~\ref{tb two isot}.

%\vspace{0.3cm}
\clearpage
\noindent {\it Examples.}
\begin{enumerate}[(a)]
\item {\it The two $\SO(3)$-actions with isotropies $\{1\}$ and $\SO(2)$.} They  are linear actions on $\Sph^5$  and $\Sph^3 \times \Sph^2$ described in Examples \ref{ex sphere} (\ref{ex linear 2OT}) and \ref{ex 1missed} (\ref{ex 1missed item Id S1}), the last one is $\fN_{0,2}^1$.
%\begin{enumerate}
%\item[(a1)]\label{ex pi2} $\SO(3) \hra \Sph^5 \subset \R^3 \times \R^3$ given by $A\star (x,y) = (Ax,Ay)$.
%
%Given $(x,y) \in \Sph^5 \subset \R^3 \times \R^3$, if $\{x,y\}\subset \R^3$ is linearly independent, then the isotropy group of $(x,y)$ is trivial. Otherwise $G_{(x,y)} \simeq \SO(2)$.
% Summarizing:
%\begin{center}
%\begin{tabular}{|c||c||c|}
%\hline
%\multicolumn{3}{|c|}{$\pi_2 : {\rm SO}(3) \hra \Sph^5 \quad \quad \Sph^5/{\rm SO}(3) \simeq D^2$}\\
%\hline\hline
%\textbf{element of }$\Sph^5$ & \textbf{isotropy type} & %\textbf{orbit stratum} \\
%\hline\hline
%$(x \mbox{ cos}\,\gt,x \mbox{ sin}\,\gt )$ & $SO(2)$  & circle \\
%\hline
%$(x,y)$ with $x,y$ linearly independent & $\{e\}$  & open 2-disk \\
%\hline
%\end{tabular}
%\end{center}

%%\item[(a2)] $\SO(3) \hra \Sph^3 \times \Sph^2$  given by $A   \star  (x \cos t, \sin t, y) = (Ax \cos t, \text{ sin}t, Ay)$ where,
%%  $$\Sph^3 \times \Sph^2 = \left\{(x \text{ cos}t, \text{ sin}t, y);\quad x, y \in \Sph^2(1) \text{ and } t\in \left[-\frac{\pi}{2},\frac{\pi}{2}\right)\right\}.$$ 
%%The points with trivial isotropy are $(x \text{ cos}t, \text{ sin}t, y)$ with $x$ and $y$ linearly independent in $ \Sph^2$. On the other hand if $y = \pm x$, the isotropy group is the SO$(2)$ which moves the plane orthogonal to $x \in \Sph^2$.

%\setlength{\parindent}{-0.7cm} 
\item {\it The two $\SU(2)$-actions with isotropies $\{1\}$ and $\SO(2)$.} They are $\fN_{0,1}^1 = \Sph^3 \wt{\times} \Sph^2$  and the action on \mbox{$\fW = \SU(3)/\SO(3)$} given by  $B\cdot [C] = [\diag(B,1)C]$, described in \eref{ex SU(2) on W}. 

\item {\it The linear $\SU(2)$-action on $\Sph^5$ with isotropies $\{1\}$ and $\SU(2)$.} See \eref{ex sphere} (\ref{ex SU2 S5}).

\item[(d)]\hspace{-0,2cm}, (e), (f) and (g) {\it The $G$-action with isotropies $\Z_m $ and $\SO(2)$ with  $m \geq 2$.} They are $\fN_{0, m}^1$. Recall that $\SU(2)$-actions with $H=\Z_{2m}$ are effective $\SO(3)$-actions with principal isotropy $\Z_m$ (see \sref{sec main}).

\item[(h)] and (i) {\it The $\SO(3)$-actions with isotropy types $\Z_2$ and $\O(2)$ or $\D_m$ and $\O(2)$.} Remark~3C in~\cite{Hudson1977} asserts that these examples are not simply-connected for any~$m$.

\item[(j)] {\it The linear $\SO(3)$-action on $\Sph^5$ with isotropy types $\SO(2)$ and $\SO(3)$.} See \eref{ex sphere} (\ref{ex SO(3) on S5}).
\end{enumerate}

If the orbit space is three dimensional then the quotient is a topological 3-manifold with boundary, c.f. \pref{prop quotient} (\ref{B3}). In this case, the isotropy groups are $\SO(2)$ and $\SO(3)$ (see \lref{lem restrictions}). In a neighborhood of a fixed point, the $\SO(3)$ slice action on $\R^5$ is the one with two fixed directions, thus $M/G$ is smooth and has non-empty boundary.  When \mbox{$K = G$}, the Second Classification Theorem guarantees that even for a non-contractible topological manifold $X$,  there is only one class of $G$-manifolds with isotropy types $(H)$ and~$(G)$, and orbit space $X$. Finally, the next proposition completes the possible orbit spaces for a cohomogeneity 3 action.

\begin{prop}\label{prop 3M bound}
 A simply-connected compact 3-dimensional manifold with boundary is diffeomorphic to a 3-sphere with $k$ open 3-disks removed.
\end{prop}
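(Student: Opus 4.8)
The plan is to reduce the statement to standard 3-manifold topology. First I would observe that a compact 3-manifold $X$ with boundary has every boundary component a closed surface, and since $X$ is simply-connected each boundary component is in fact a 2-sphere: indeed, the inclusion $\partial X \hookrightarrow X$ induces the zero map on $\pi_1$ because $\pi_1(X)$ is trivial, and by the half-lives-half-dies principle (or directly by Poincaré--Lefschetz duality, $H_1(\partial X)$ surjects onto $H_1(X,\partial X)$ with the kernel a Lagrangian-type subspace coming from $H_1(X)$), one gets that each genus-$g$ component would contribute a free summand to $H_1(X;\Q)$, forcing $g=0$. So $\partial X$ is a disjoint union of $k$ two-spheres for some $k\ge 0$.

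Next, cap off each boundary sphere with a 3-ball to produce a closed 3-manifold $\hat X$. By van Kampen's theorem $\pi_1(\hat X)\cong\pi_1(X)$ is trivial (capping with a simply-connected piece along a simply-connected sphere does not change $\pi_1$), so $\hat X$ is a closed simply-connected 3-manifold. By the Poincaré conjecture (Perelman) $\hat X$ is diffeomorphic to $\Sph^3$. Removing $k$ disjoint open 3-disks from $\Sph^3$ recovers $X$ up to diffeomorphism, which is exactly the claim.

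I would also remark, for self-containedness, that one does not strictly need the full Poincaré conjecture here if one is willing to invoke only Proposition~\ref{prop quotient}: in the intended application $X=M/G$ arises as the quotient of a simply-connected 5-manifold, but the cleanest and most general statement is the one above, and the proof I sketched is the natural route. The main obstacle is simply that the argument rests on the Poincaré conjecture; the rest (identifying the boundary components as spheres, the capping argument, van Kampen) is routine. If one wants to avoid Perelman, the statement should be weakened to the level of orbit-space information actually used elsewhere in the paper, but as stated the capping-and-Poincaré argument is the intended proof.
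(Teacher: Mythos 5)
Your proposal is correct and follows essentially the same route as the paper: identify each boundary component as a 2-sphere via Poincar\'e--Lefschetz duality and the long exact sequence of the pair, cap off with 3-balls, and conclude from the Poincar\'e conjecture that the resulting closed simply-connected 3-manifold is $\Sph^3$. The only difference is that you make the reliance on Perelman's theorem explicit, whereas the paper's proof states the final step without attribution.
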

\begin{proof}
Let $X$ be a simply-connected compact 3-manifold with boundary. We claim that the boundary of $X$ is a disjoint union of 2-spheres. In fact, Poincar\'e duality to the pair $(X, \partial X)$ guarantees that $H_2(X,\partial X) \simeq H^1(X)$, so from the exactness of the relative homology sequence, 
$$\cdots \ra H_2(X, \partial X) \ra H_1(\partial X) \ra H_1(X) \ra \cdots$$
we conclude that $H_1( \partial X) = 0$. So each connected component of $\partial X$ is homeomorphic to $\Sph^2$ by the classification of compact surfaces.

The manifold obtained from $X$ by covering each connected component with a 3-disk is simply-connected compact without boundary, so it is a 3-sphere and the proposition is proved.
\end{proof}
Summarizing the discussion above we have the following.
\begin{prop}
There is precisely one $\SO(3)$-action (up to equivalence) with isotropy types $(\SO(2))$ and $\SO(3)$ with quotient a 3-sphere with $k$ three-disks removed, $k>0$.
\end{prop}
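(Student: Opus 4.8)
The plan is to invoke the Second Classification Theorem of Bredon, Hsiang--Hsiang and J\"anich (the version already cited above) in the cohomogeneity three setting with $H=\SO(2)$, $K=G=\SO(3)$, together with \pref{prop 3M bound} and \pref{prop quotient}(\ref{B3}). First I would observe that by \pref{prop quotient}(\ref{B3}) the orbit space $X=M/G$ is a simply-connected topological $3$-manifold, possibly with boundary, and by the slice analysis at a fixed point (the $\SO(3)$-representation on $\R^5$ with two trivial directions, i.e. $\diag(A,1,1)$) the boundary is non-empty, consists precisely of the fixed points, and $X$ is in fact smooth there. By \pref{prop 3M bound}, $X$ is diffeomorphic to $\Sph^3$ with $k$ open $3$-disks removed, $k>0$. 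So the only remaining point is to show that for each such $X$ there is exactly one equivalence class of $\SO(3)$-manifold with the prescribed isotropy data.

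Next I would apply the Second Classification Theorem. Since $K=G$, we have $N(H)\cap N(K)=N(H)$, so the group appearing in \eqref{eqn action homotopy} is $N(H)/(N(H)\cap N(K))=N(H)/N(H)$, which is trivial; equivalently, as noted in the paragraph preceding \pref{prop 3M bound}, when $K=G$ the classifying set is a single point even over a non-contractible base $X$. Hence there is at most one $\SO(3)$-manifold with orbit space $X$ and isotropy types $(\SO(2))$ and $(\SO(3))$. (One should also remark that the value $l=1$ in row (j) of Table~\ref{tb two isot} is exactly this computation, carried out there over a contractible base; the point here is that the argument localizes and needs no contractibility of $X$ when $K=G$.)

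Finally I would exhibit an actual example realizing each $k>0$, so that ``at most one'' becomes ``precisely one''. The case $k=1$ is the linear action on $\Sph^5$ from \eref{ex sphere}(\ref{ex SO(3) on S5}), whose quotient is a $3$-disk $=\Sph^3$ minus one open disk. For $k>1$ one forms equivariant connected sums at fixed points exactly as in \eref{ex kmissed}: at a fixed point of \eref{ex sphere}(\ref{ex SO(3) on S5}) the slice representation of $\SO(3)$ on $\R^5$ is the standard one on $\R^3$ plus two trivial summands, and this is shared by both fixed points and by fixed points of the other $k=1$ model $\Sph^3\times\Sph^2=\fN^1_{0,2m}$ of \eref{ex 1missed}(\ref{ex 1missed item}); gluing along the boundary spheres of invariant balls around fixed points adds one to the number of boundary $3$-spheres of the quotient while preserving the isotropy types $(\SO(2))$ and $(\SO(3))$ and simple connectivity. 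Iterating produces an $\SO(3)$-manifold with quotient $\Sph^3$ minus $k$ disks for every $k>0$, and by uniqueness it is independent of the choices made. The only mildly delicate point is the realization step --- checking that the equivariant connected sum is well-defined and does not change the isotropy types --- but this is precisely the content of the construction in \eref{ex kmissed} and \eref{ex hudson} and of \pref{prop 3M bound}, so no new work is needed.
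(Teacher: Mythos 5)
Your proposal is correct and follows essentially the same route as the paper: Proposition~\ref{prop quotient}(\ref{B3}) and the slice analysis at fixed points to pin down the quotient, Proposition~\ref{prop 3M bound} to identify it as $\Sph^3$ minus $k$ disks, the Second Classification Theorem with $K=G$ (which holds over a non-contractible base) for uniqueness, and the linear $\Sph^5$ action together with the equivariant connected sums of Example~\ref{ex kmissed} for realization. The only difference is that you spell out the realization step slightly more explicitly than the paper, which simply points to Example~\ref{ex kmissed}.
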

These actions are precisely those described in \eref{ex kmissed}.
This concludes the classification of actions with singular orbits and two orbit types. The remaining part of the proof of \tref{thm main intro} follows from~\cite{Hudson1977} since only \mbox{$\SO(3)$-manifolds} have three isotropy types.

\subsection{Actions with three orbit types} \label{sec 3OT}
By \lref{lem restrictions}, the $G$-actions with three orbit types have isotropy groups $\Z_2 \times \Z_2$, $\O(2)$ and $\SO(3)$, thus $G=\SO(3)$. So, all these actions have fixed points and the isotropy representation around a fixed point is the irreducible $\SO(3)$-action on $\R^5$. From this representation it is easy to conclude that the fixed points are isolated and there are at least two fixed points.
 Hudson~\cite{Hudson1977}  showed that if there are exactly two fixed points, it is the linear action on $\Sph^5$ described in \eref{ex sphere} (\ref{ex irreducible}).
 If it has three fixed points then the action is equivalent to left multiplication on the cosets of the Wu-manifold $\fW=\SU(3)/\SO(3)$ (see \eref{ex SO(3) on Wu}). Moreover, there are two $\SO(3)$-manifolds with exactly four fixed points: the Brieskorn variety,~$\fB$ of type $(2,3,3,3)$, and the connected sum of two Wu-manifolds with the  action above.
All other examples of $\SO(3)$-manifolds with three orbit types have more than four fixed points.  In \cite{Hudson1977} it is proved that the $\SO(3)$-manifolds with more than two isolated fixed points are connected sums of copies of $\fB$ and~$\fW$ (see \eref{ex hudson}).

\section{Five-manifolds with nonnegative curvature}\label{sec positive}
In this section we prove Theorems \ref{thm metric} and \ref{thm positive}. \tref{thm positive} is a consequence of Theorems~\ref{thm exceptional SU(2)} and~\ref{thm main intro}, by using Frankel's Lemma \cite{frankel} and the classification of the $G$-manifolds $M$ with fixed point set with codimension one or two in $M/G$ (c.f.,~\cite{grove_kim} and~\cite{grove_searle2}).
In our context, the following lemma provides a more elementary proof.

\begin{lem}\label{isolated fixed M5} Let $M$ be a simply-connected compact $\SO(3)$-manifold of dimension~5. If $M$ admits an invariant metric of nonnegative (resp. positive) curvature, then the number of isolated fixed points cannot exceed~3 (resp. 2).
\end{lem}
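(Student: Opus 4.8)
The plan is to stay in dimension two and read the bound off the orbit space $X:=M/\SO(3)$ using Gauss--Bonnet. First I would reduce to the case of three orbit types. If $M$ has an isolated fixed point $p$, the slice representation at $p$ has no nonzero fixed vector, so (as recorded in \sref{sec 3OT} and \eref{ex hudson}) it is the $5$-dimensional irreducible representation of $\SO(3)$; in particular $G_p=\SO(3)$ and, by \lref{lem restrictions}, the action has exactly the isotropy types $\Z_2\times\Z_2\subset\O(2)\subset\SO(3)$. Then the principal isotropy is finite, the action has cohomogeneity two, and by \pref{prop quotient}\,(\ref{B1}) together with simple connectedness $X$ is a topological $2$-disk, with $\partial X$ the image of the singular orbits and no exceptional orbits. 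Every fixed point is isolated (same slice), so writing $k$ for the number of fixed points we must show $k\le 3$ in nonnegative curvature and $k\le 2$ in positive curvature; each of the $k$ fixed points maps to a boundary point of $X$, since near it $X$ looks like $\R^5/\SO(3)$.

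Next I would pin down the local geometry of $X$ near the image $\bar p$ of a fixed point $p$. By Schur's lemma the invariant inner product on the irreducible $\R^5$ is unique up to scaling, hence the tangent cone (and space of directions) of $X$ at $\bar p$ is that of the flat quotient $\R^5/\SO(3)$ by the irreducible action, namely a flat sector of angle $\pi/3$ --- precisely the local picture of \eref{ex sphere}\,(\ref{ex irreducible}). Thus $\bar p$ is a corner of $\partial X$ of interior angle $\pi/3$, while along the $\O(2)$-stratum $\partial X$ is a smooth arc. Since $(M,g)$ has $\sec\ge 0$, the orbit-space metric makes $X$ a $2$-dimensional Alexandrov space of curvature $\ge 0$; doubling $X$ along $\partial X$ (Perelman's doubling theorem) yields a closed $2$-dimensional Alexandrov space $\hat X$ of curvature $\ge 0$, homeomorphic to $S^2$ because $X\cong D^2$. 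Each corner $\bar p_i$ becomes a cone point of $\hat X$ of cone angle $2\cdot\frac{\pi}{3}=\frac{2\pi}{3}$ (two $\frac{\pi}{3}$-sectors glued along both straight edges), and off these $k$ points the curvature measure $\omega$ of $\hat X$ is nonnegative.

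Finally, Gauss--Bonnet for closed $2$-dimensional Alexandrov surfaces gives $\omega(\hat X)=2\pi\,\chi(\hat X)=4\pi$; since each $\bar p_i$ carries the atom $2\pi-\frac{2\pi}{3}=\frac{4\pi}{3}$ and $\omega\ge 0$, we obtain $\frac{4\pi}{3}\,k\le 4\pi$, i.e.\ $k\le 3$. If moreover $\sec>0$, then by O'Neill's formula the Gauss curvature of the principal (smooth) part of $X$ is everywhere strictly positive, so $\omega$ has strictly positive mass away from the cone points, and the inequality becomes strict: $\frac{4\pi}{3}\,k<4\pi$, i.e.\ $k\le 2$. (One could instead apply Gauss--Bonnet with boundary directly to $X$, using that the boundary of an Alexandrov space of curvature $\ge 0$ has nonnegative turn: $2\pi=\chi(X)\cdot 2\pi\ge k\bigl(\pi-\frac{\pi}{3}\bigr)=\frac{2\pi}{3}k$.)

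I expect the only delicate points to be the identification of the cone angle $\pi/3$ (which is forced by Schur's lemma and \eref{ex sphere}\,(\ref{ex irreducible})) and the legitimacy of running Gauss--Bonnet on the singular space $X$; once $X$ is recognized as an Alexandrov surface with the stated conical boundary points the estimate is immediate, and --- the point of the lemma --- it uses neither Frankel's Lemma nor the classification of manifolds with low-codimension fixed-point sets.
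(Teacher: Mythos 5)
Your argument is correct and follows essentially the same route as the paper: identify the orbit space as a $2$-disk whose boundary corners at the fixed points have angle $\pi/3$ (forced by the irreducible slice representation), observe that nonnegative (resp.\ positive) curvature descends to the quotient, and apply Gauss--Bonnet to bound the number of corners by $3$ (resp.\ $2$). The Alexandrov doubling is an optional embellishment --- your closing parenthetical, Gauss--Bonnet with boundary applied directly to the geodesic polygon $M/\SO(3)$, is exactly the paper's computation.
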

\begin{proof} If the action has isolated fixed points, then the quotient is a topological 2-disk since the isotropy action in a neighborhood of an isolated fixed point is equivalent to the irreducible $\SO(3)$-representation in $\SO(5)$, and thus, the isotropy types are $(\Z_2 \times \Z_2)$, $(\O(2))$ and $\SO(3)$. The orbit strata of a group action is well known to be totally geodesic (see e.g. \cite{grove-survey}). Hence the boundary of $M/\SO(3)$ consists of a geodesic polygon with $n$ edges and $n$ vertices. The edges correspond to the singular isotropies~$\O(2)$ and vertices are the fixed points. From the isotropy representation it follows that the angles between the edges are all equal to $\pi/3$  (see \eref{ex sphere} (\ref{ex irreducible})).

By O'Neill's formula the (interior of the) quotient inherits a metric of nonnegative (resp. positive) curvature if $M$ has an invariant metric of nonnegative (resp. positive) curvature. Thus, by the  Gauss-Bonnet Theorem, the sum of inner angles of the $n$-polygon $M/\SO(3)$ is equal to, or bigger (resp. strictly bigger) than $\pi(n-2)$. So, $n=2$ or 3 if the curvature is nonnegative and $n=2$ if the curvature is positive.
\end{proof}

\begin{proof}[Proof of \tref{thm metric}]
By \tref{thm main intro} and \sref{sec 3OT} the $G$-manifolds with more than 3 isolated fixed points are $k\, \fW \, \# \,l \, \fB$ with $(k,l) \neq (1,0)$ (since $\SO(3) \hra \fW$ has exactly 3 fixed points). So, by \lref{isolated fixed M5} they do not admit invariant metrics with nonnegative curvature.

The connected sum $M$ of $k$ copies of $\Sph^3 \times \Sph^2$ (see \eref{ex kmissed}) has quotient $X$ diffeomorphic to $\Sph^3$ with $k+2$ three-disks removed. If $M$ admits a metric of nonnegative curvature, then~$X$ with the induced metric also has nonnegative curvature. As follows from the proof of the Soul Theorem~\cite{cheeger}, a compact nonnegatively curved manifold $X$ with non-empty convex boundary contains a totally geodesic compact submanifold $\gS$ without boundary and $\gS$ is a deformation retract of $X$. In our case $\dim\gS \neq 0$ since $X$ is not a disk. Also, $\dim \gS \neq 1$ since $X$ is simply-connected. Thus $\gS$ is a simply-connected surface and a neighborhood of $\gS$ is diffeomorphic to  $\Sph^2 \times (-1,1)$. Using the flow of the gradient like vector field in the proof of the Soul Theorem it follows that $\partial X$ has two connected components. Therefore, $k=0$ and only $\Sph^3 \times \Sph^2$ with the linear $\SO(3)$-action on the first factor admits an invariant metric of 
nonnegative curvature.

On the other hand, all the other actions in Theorems \ref{thm exceptional SU(2)} and \ref{thm main intro}, i.e., the linear actions on~$\Sph^5$ and $\Sph^3\times \Sph^2$, the $\SO(3)$ or $\SU(2)$ left multiplication on the cosets in $\fW$, and $\fN_{m,n}^l$ clearly admit an invariant metric of nonnegative curvature.
\end{proof}

\begin{proof}[Proof of \tref{thm positive}]
We restrict ourselves to the actions in \tref{thm metric}. By Theorems \ref{thm exceptional SU(2)} and~\ref{thm main intro}, the $G$-manifolds diffeomorphic to $\Sph^3 \times \Sph^2$ are $\fN_{m,n}^l$ and the $\SO(3)$-manifold in \eref{ex 1missed}. This last one has quotient $X$ diffeomorphic to a 3-sphere with two 3-disks removed, thus its soul is homeomorphic to a 2-sphere and by the Soul Theorem $X$ cannot be positively curved. So neither $\Sph^3 \times \Sph^2$ admits an invariant metric of positive curvature. Also, the $\SO(3)$-action on~$\fW$ has three fixed points  (see \eref{ex SO(3) on Wu}), and therefore does not admit an invariant metric of positive curvature by \lref{isolated fixed M5}.

We finally observe that~$\fN_{m,n}^l$ with $\gcd(m,n)\geq 3$ does not admit a metric of positive curvature. In fact, by \pref{claim fixed} the fixed point set of the principal isotropy group~$\Z_{\gcd(m,n)}$ has two connected components of dimension three and by Frankel's Lemma \cite{frankel} in  a positively curved manifold $M^n$ the sum of the dimensions of two totally geodesic submanifolds cannot exceed $n$.
It is also clear that the fixed point set of the \mbox{$\SO(3)$-manifold}~$\fN_{0,0}^1$ is the disjoint union of two 3-spheres, thus also cannot admit metric of positive curvature.
\end{proof}
\begin{rem}
 Using standard arguments of equivariancy it can be shown that for fixed $m$ and $n$ with $\gcd(m,n)=1$ or 2, only 3 of the manifolds $\fN_{m,n}^l$ are candidates to admit positive curvature.
\end{rem}

\bibliography{BIBLI}
\vspace{0.3cm}

\noindent UNIRIO  - Brazil. \newline
{\small Universidade Federal do Estado do Rio de Janeiro}\newline
E-mail adress: {\it simas@impa.br}

\end{document}